\setlist{nosep,leftmargin=*}
\theoremstyle{plain}
\newtheorem{theorem}{Theorem}[section]
\newtheorem{lemma}[theorem]{Lemma}
\newtheorem{proposition}[theorem]{Proposition}
\newtheorem{corollary}[theorem]{Corollary}
\theoremstyle{definition}
\newtheorem{definition}[theorem]{Definition}
\newtheorem{example}[theorem]{Example}
\newtheorem{problem}[theorem]{Problem}
\theoremstyle{remark}
\newtheorem{remark}[theorem]{Remark}
\newtheorem{conjecture}[theorem]{Conjecture}
\numberwithin{equation}{section}
\DeclareMathOperator{\Spec}{Spec}
\DeclareMathOperator{\Ext}{Ext}
\DeclareMathOperator{\Tor}{Tor}
\title{\textbf{Derived $\Gamma$-Geometry, Sheaf Cohomology, and Homological Functors on the Spectrum of Commutative Ternary $\Gamma$-Semirings}}
\author{
\textbf{Chandrasekhar Gokavarapu}$^{1,2,*}$\\
\textit{${}^1$Lecturer in Mathematics, Government College (Autonomous), Rajahmundry, A.P., India}\\
\textit{${}^2$ Research Scholar, Dept. of Mathematics Acharya Nagarjuna University, Guntur, A.p.,, India}\\
\textit{Email:} \texttt{chandrasekhargokavarapu@gmail.com}\\[1.2ex]
\textbf{Dr.~D.~Madhusudhana Rao}$^{3,4}$\\
\textit{${}^3$Lecturer in Mathematics, Government College for Women(Autonomous), Guntur, A.P., India}\\
\textit{${}^4$Department of Mathematics, Acharya Nagarjuna University, Guntur, A. P., India}\\
\textit{Email:} \texttt{dmrmaths@gmail.com}\\[1.2ex]
\textit{\bf * Corresponding author: Chandrasekhar Gokavarapu}\\
\textit{\bf * Corresponding author Email: chandrasekhargokavarapu@gmail.com}
}
\date{}
\begin{document}
\maketitle

\begin{abstract}

This paper develops a comprehensive geometric and homological framework for
\emph{derived~$\Gamma$-geometry}, extending the theory of commutative ternary~$\Gamma$-semirings
established in our earlier works.
Building upon the ideal-theoretic, computational, and categorical foundations
of  Papers~A to D \cite{Rao2025A,   Rao2025B1,Rao2025B2, Rao2025C, Rao2025D},    the present study constructs the algebraic and geometric infrastructure
necessary to embed $\Gamma$-semirings within the modern language of
derived and categorical geometry.

We define the affine $\Gamma$-spectrum
$\Spec_\Gamma(T)$ together with its structure sheaf
$\mathcal{O}_{\Spec_\Gamma(T)}$,
establishing a Zariski-type topology adapted to ternary~$\Gamma$-operations.
Within this setting, the category of $\Gamma$-modules
is shown to be additive, exact, and monoidal-closed,
supporting derived functors
$\Ext_\Gamma$ and $\Tor^\Gamma$,
whose existence is guaranteed by explicit projective and injective resolutions.
The derived category $D(T\text{-}\Gamma\mathbf{Mod})$
is then constructed to host homological dualities and
Serre-type vanishing theorems,
culminating in a categorical form of Serre–Swan correspondence \cite{Swan1962, Gelfand1960}for
$\Gamma$-modules.

Geometric and categorical unification is achieved through
fibered and derived $\Gamma$-stacks,
which provide a natural environment for studying morphisms of
affine~$\Gamma$-schemes and cohomological descent.
Connections with non-commutative geometry and higher-arity
($n$-ary) generalizations are established,
revealing that derived~$\Gamma$-geometry forms a self-consistent
homological universe capable of expressing algebraic,
geometric, and physical dualities within one categorical law.
Finally, we outline computational methods for finite~$\Gamma$-semirings
and discuss potential applications to mathematical physics,
where ternary~$\Gamma$-operations model triadic couplings
and cohomology groups correspond to conserved quantities.

Overall, this work completes the algebraic–geometric–homological
synthesis of ternary~$\Gamma$-semirings and inaugurates
a new research direction in universal derived geometry,
combining categorical precision with computational realizability..

\end{abstract}
{\bf keywords:}Ternary~$\Gamma$-semiring; $\Gamma$-module; Affine~$\Gamma$-scheme;
Sheaf cohomology; Ext and Tor functors; Derived category;
Homological algebra; Categorical duality;
Spectrum; Radical ideal; Non-commutative geometry;
Computational algebraic geometry\\

{\bf MSC Classification:}16Y60, 16Y90, 14A15, 18G10, 18E30, 18C15, 06B10, 68W30.

\maketitle
\section{Introduction}

The study of algebraic systems whose operations extend beyond binary composition
has gained renewed significance in modern mathematics,
linking abstract algebra, category theory, and mathematical physics.
Among such generalizations, the class of
\emph{ternary $\Gamma$-semirings}
occupies a central position:
it provides a multi-parametric algebraic setting in which
additivity and multiplicativity coexist through
a $\Gamma$-indexed ternary operation
$\{\,\cdot,\cdot,\cdot\,\}_\Gamma$.
This structure subsumes both ordinary semirings
and classical $\Gamma$-semirings \cite{Dutta2015, Sardar2021}, while offering a fertile base
for constructing non-linear, non-commutative \cite{Connes1994}, and higher-arity \cite{Dornt1928, Post1940} analogues of module theory, homological algebra, and geometry.

\smallskip
Earlier investigations by the present authors
( Papers~A to D \cite{Rao2025A,   Rao2025B1,Rao2025B2, Rao2025C, Rao2025D})
developed the algebraic foundations of this framework.
Paper A  \cite{Rao2025A} established the ideal-theoretic layer,
introducing prime, semiprime, maximal, and primary ideals,
together with their radical and lattice properties.
  \cite{Rao2025B1,Rao2025B2}provided the finite and computational classification
of commutative ternary $\Gamma$-semirings,
constructing enumeration algorithms and verifying distributive constraints
for low orders.
Paper C  \cite{Rao2025C} advanced to the homological level,
defining ternary $\Gamma$-modules,
isomorphism theorems, annihilator–primitive correspondences,
and Schur-density embeddings,
and proving that the category
$T{-}\Gamma\mathbf{Mod}$
is additive, exact, and monoidal-closed,
supporting derived functors $\Ext_\Gamma$ and $\Tor^\Gamma$ \cite{Yoneda1954}:.

\smallskip
The present work builds upon these foundations
to construct a \emph{derived and geometric theory}
of ternary $\Gamma$-semirings,
establishing bridges between algebraic,
categorical, and analytic perspectives.
By defining affine $\Gamma$-schemes,
structure sheaves, and sheaf cohomology,
we extend the $\Gamma$-spectrum
$\Spec_\Gamma(T)$ into a fully developed geometric object.
The introduction of derived categories and homological functors \cite{Weibel1994, MacLane1998} provides the tools.. necessary for deeper structural theorems,
including categorical dualities and Serre-type vanishing results \cite{Serre1955}.
This transition from purely algebraic to categorical geometry
marks the emergence of a new domain—\emph{derived $\Gamma$-geometry}—
within which homological and geometric reasoning coexist seamlessly.

\smallskip
Beyond its algebraic significance,
the ternary $\Gamma$ formalism establishes an interface
with modern mathematical physics.
The $\Gamma$-grading encapsulates multiple coupling constants,
and the ternary operation models triadic interactions
that naturally occur in higher-spin and multi-particle systems.
By expressing dynamics through dg-derivations
on derived $\Gamma$-modules,
we obtain a categorical representation of
quantized evolution, where cohomology groups
encode conserved quantities and symmetries.
Thus, derived $\Gamma$-geometry unifies structural algebra,
homological analysis, and physical interpretation
within a single categorical continuum.

\smallskip
The objectives of this paper are therefore threefold:
\begin{enumerate}
  \item to construct the geometric and homological infrastructure
        of commutative ternary $\Gamma$-semirings,
        including spectra, sheaves, and derived categories;
  \item to establish categorical and duality theorems
        linking algebraic and geometric invariants; and
  \item to outline computational, logical, and physical
        extensions of the theory within a unified framework.
\end{enumerate}

\smallskip
The resulting theory not only completes
the algebraic–homological–geometric synthesis
of the ternary $\Gamma$ framework
but also inaugurates a new direction of research:
the study of \emph{categorical universes}
generated by multi-parametric operations,
where algebra, geometry, and physics
are manifestations of one underlying homological law.

\section{Preliminaries}

This section consolidates the foundational definitions,
notations, and categorical principles underlying the theory of
derived~$\Gamma$-geometry.
It formalizes the algebraic infrastructure of ternary~$\Gamma$-semirings,
their modules, ideals, and morphisms,
and establishes the basic categorical environment required for
homological constructions in later sections.
All notions are considered over a fixed commutative additive monoid~$(T,+,0)$
together with a non-empty parameter set~$\Gamma$.

\subsection{Ternary  \texorpdfstring{$\Gamma$}{Gamma}-Semirings} 

\begin{definition}[Ternary $\Gamma$-semiring]\cite{Dutta2015, Sardar2021}
A \emph{ternary $\Gamma$-semiring}
is a structure $(T,+,\{\cdot\,\cdot\,\cdot\}_\Gamma)$ consisting of
an additive commutative semigroup $(T,+)$
and a ternary operation
\[
\{-\,\,-\,\,-\}_\Gamma:\ T\times T\times T\times\Gamma\to T,\qquad
(a,b,c,\gamma)\mapsto\{a\,b\,c\}_\gamma,
\]
satisfying:
\begin{enumerate}
  \item \textbf{Distributivity in each variable:}
  \[
  \{a+b,c,d\}_\gamma=\{a,c,d\}_\gamma+\{b,c,d\}_\gamma,
  \quad
  \text{and cyclically.}
  \]
  \item \textbf{Associativity (ternary–$\Gamma$):}
  For all $a,b,c,d,e\in T$, $\gamma,\delta\in\Gamma$,
  \[
  \{a,b,\{c,d,e\}_\gamma\}_\delta
  = \{\{a,b,c\}_\gamma,d,e\}_\delta.
  \]
  \item \textbf{Neutrality:}
  $\{a,0,b\}_\gamma=0$ for all $a,b\in T$ and $\gamma\in\Gamma$.
\end{enumerate}
If in addition $\{a,b,c\}_\gamma=\{b,a,c\}_\gamma$,
the semiring is said to be \emph{commutative}.
\end{definition}

\begin{example}
Let $T=\mathbb N$ and $\Gamma=\mathbb N$ with
$\{a,b,c\}_\gamma=ab+bc+ca+\gamma$.
Then $(T,+,\{\cdot\,\cdot\,\cdot\}_\Gamma)$ is a commutative
ternary~$\Gamma$-semiring.
\end{example}

\begin{remark}
The binary operation of a classical semiring
is recovered when $\Gamma=\{1\}$ and
$\{a,b,c\}_1=ab+bc+ca$;
thus ternary~$\Gamma$-semirings generalize both
semirings and $\Gamma$-semirings in the sense of Nobusawa.
\end{remark}

\subsection{ Ideals and Congruences}


\begin{definition}[Ideal]
A subset $I\subseteq T$ is a \emph{$\Gamma$-ideal} if:
\begin{enumerate}
  \item $a,b\in I\Rightarrow a+b\in I$,
  \item $a\in I$, $b,c\in T$, $\gamma\in\Gamma$
        $\Rightarrow\{a,b,c\}_\gamma\in I$ and its cyclic permutations.
\end{enumerate}
\end{definition}

\begin{definition}[Prime and semiprime $\Gamma$-ideals] \cite{Golan1999, DubeGoswami2023}
An ideal $P\ne T$ is \emph{prime} if
\[
\{a,b,c\}_\gamma\in P \Rightarrow a\in P\ \text{or}\ b\in P\ \text{or}\ c\in P.
\]
It is \emph{semiprime} if
$\{a,a,a\}_\gamma\in P$ implies $a\in P$ for all $\gamma\in\Gamma$.
\end{definition}

\begin{proposition}
The intersection of any family of semiprime $\Gamma$-ideals
is semiprime.
\end{proposition}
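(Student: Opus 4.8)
The plan is to verify the statement directly by a pointwise argument: given a nonempty family $\{P_i\}_{i\in J}$ of semiprime $\Gamma$-ideals of $T$, set $I=\bigcap_{i\in J}P_i$ and show that $I$ is again a $\Gamma$-ideal and that it satisfies the semiprimeness implication. I will treat the two requirements separately, since nothing in the argument interacts across them.

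First I would check that $I$ is a $\Gamma$-ideal. Closure under addition: if $a,b\in I$ then $a,b\in P_i$ for every $i$, and since each $P_i$ is an ideal, $a+b\in P_i$ for every $i$, hence $a+b\in I$. Closure under the ternary operation: if $a\in I$, $b,c\in T$, and $\gamma\in\Gamma$, then for each $i$ we have $a\in P_i$, so the ideal axiom for $P_i$ gives $\{a,b,c\}_\gamma\in P_i$ together with all cyclic permutations; intersecting over $i$ yields $\{a,b,c\}_\gamma\in I$ and likewise its cyclic images. Moreover $I\neq T$: since $J$ is nonempty, fix any $i_0\in J$, and then $I\subseteq P_{i_0}\subsetneq T$.

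Next I would establish semiprimeness of $I$. Suppose $a\in T$ and $\gamma\in\Gamma$ satisfy $\{a,a,a\}_\gamma\in I$; equivalently $\{a,a,a\}_\gamma\in P_i$ for every $i\in J$. Fix an arbitrary $i\in J$. Since $P_i$ is semiprime and $\{a,a,a\}_\gamma\in P_i$, we conclude $a\in P_i$. As $i$ was arbitrary, $a\in\bigcap_{i\in J}P_i=I$. Therefore $I$ is semiprime, which completes the argument.

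I do not expect a genuine obstacle here; the proof is a routine "coordinatewise" verification. The only points deserving attention are bookkeeping conventions: one should assume the family is nonempty so that $I$ is proper (an empty intersection would give $I=T$), and one should fix once and for all the reading of the semiprimeness condition — whether the hypothesis $\{a,a,a\}_\gamma\in P$ is quantified over all $\gamma\in\Gamma$ or asserted for a single $\gamma$ — since the same index-by-index passage to each $P_i$ works verbatim in either case.
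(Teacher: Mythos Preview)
Your argument is correct and follows essentially the same route as the paper's proof: assume $\{a,a,a\}_\gamma$ lies in every $P_i$, apply semiprimeness coordinatewise to get $a\in P_i$ for each $i$, and intersect. You simply add more detail than the paper, which omits the (routine) verification that the intersection is again a $\Gamma$-ideal and the remarks on quantifier conventions and nonemptiness.
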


\begin{proof}
If $\{a,a,a\}_\gamma$ lies in every $P_i$ of a family,
then by semiprimeness $a\in P_i$ for each~$i$,
hence $a$ lies in $\bigcap_i P_i$.
\end{proof}

\begin{definition}[Congruence]
A \emph{$\Gamma$-congruence} on~$T$
is an equivalence relation~$\rho$ such that
$(a,b),(c,d),(e,f)\in\rho$ implies
$\{a,c,e\}_\gamma\ \rho\ \{b,d,f\}_\gamma$
for all~$\gamma\in\Gamma$.
\end{definition}

\begin{remark}
Prime ideals correspond to prime congruences,
yielding the spectral space $\Spec_\Gamma(T)$ \cite{Grothendieck1960, Mincheva2016}endowed with
the Zariski topology        defined by
$V_\Gamma(I)=\{P\in\Spec_\Gamma(T)\mid I\subseteq P\}$.This connects to classical radical theories \cite{SardarJacobson2024}.
\end{remark}

\subsection{  \texorpdfstring{$\Gamma$}{Gamma}-Modules and Morphisms}

\begin{definition}[Left $\Gamma$-module]
Let $T$ be a ternary $\Gamma$-semiring.
A \emph{left $\Gamma$-module} $M$
is an additive commutative semigroup $(M,+)$
equipped with a ternary action
\[
\{-\,\,-\,\,-\}_\Gamma:\ T\times T\times M\times\Gamma\to M
\]
satisfying distributivity in all variables and
\[
\{a,b,\{c,d,m\}_\gamma\}_\delta
 = \{\{a,b,c\}_\gamma,d,m\}_\delta.
\]
\end{definition}

\begin{definition}[Homomorphism]
A function $f:M\to N$ between $\Gamma$-modules
is a \emph{$\Gamma$-homomorphism} if
\[
f(\{a,b,m\}_\gamma)=\{a,b,f(m)\}_\gamma
\quad\text{and}\quad f(m_1+m_2)=f(m_1)+f(m_2).
\]
\end{definition}

\begin{lemma}
The class of $\Gamma$-modules with $\Gamma$-homomorphisms
forms an additive category $\Gamma$-$\mathbf{Mod}_T$
admitting kernels, cokernels, and finite biproducts.
\end{lemma}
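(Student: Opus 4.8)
The plan is to check in turn the three pieces of data of an additive category --- enrichment in commutative monoids, a zero object, finite biproducts --- and then to construct kernels and cokernels by hand. First I would put on each hom-set $\Hom_\Gamma(M,N)$ the pointwise sum $(f+g)(m)=f(m)+g(m)$; this is again a $\Gamma$-homomorphism since $(f+g)(\{a,b,m\}_\gamma)=\{a,b,f(m)\}_\gamma+\{a,b,g(m)\}_\gamma=\{a,b,f(m)+g(m)\}_\gamma$ by distributivity of the action in the last slot, and additivity is immediate. Thus $\Hom_\Gamma(M,N)$ is a commutative monoid with neutral element the zero homomorphism $m\mapsto 0_N$, and composition is bi-additive. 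I would record here the one structural caveat: because the underlying semigroups need not admit additive inverses, these hom-monoids are in general not abelian groups, so ``additive'' is to be understood in the commutative-monoid--enriched (semiadditive) sense; this does not affect the constructions below. The zero object is the one-element module $\{0\}$.

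Next I would treat biproducts, which are routine. For $M,N$ one forms $M\oplus N$ with componentwise addition and componentwise action $\{a,b,(m,n)\}_\gamma=(\{a,b,m\}_\gamma,\{a,b,n\}_\gamma)$; this inherits all module axioms coordinatewise. With the evident injections $\iota_M,\iota_N$ and projections $\pi_M,\pi_N$ one checks $\pi_M\iota_M=\id_M$, $\pi_N\iota_N=\id_N$, $\pi_M\iota_N=0=\pi_N\iota_M$, and $\iota_M\pi_M+\iota_N\pi_N=\id_{M\oplus N}$, which is exactly the biproduct identity in a semiadditive category; finite biproducts then follow by induction. Kernels are equally direct: for $f\colon M\to N$ put $\Ker f=\{\,m\in M : f(m)=0_N\,\}$. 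This is a sub-$\Gamma$-module, using that the ternary action absorbs the zero of a $\Gamma$-module (so $\{a,b,m\}_\gamma\in\Ker f$ whenever $m\in\Ker f$), and the inclusion $\Ker f\hookrightarrow M$ is universal among morphisms into $M$ that $f$ kills, by the usual argument.

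The genuine obstacle is the construction of cokernels, since over a semiring-type base one cannot simply quotient by a subobject. Given $f\colon M\to N$ I would define a relation $\rho$ on $N$ by $n\mathbin{\rho}n'$ iff there exist $m,m'\in M$ with $n+f(m)=n'+f(m')$ (the Bourne-type congruence attached to the image of $f$), and set $\Coker f:=N/\rho$ with the induced operations. The work is to verify three things. (i) $\rho$ is an equivalence relation: reflexivity and symmetry are clear, and transitivity follows by adding the two defining witnesses --- if $n_1+f(m_1)=n_2+f(m_2)$ and $n_2+f(m_3)=n_3+f(m_4)$ then $n_1+f(m_1+m_3)=n_3+f(m_2+m_4)$. (ii) $\rho$ is a $\Gamma$-congruence: additive compatibility is immediate, and compatibility with the ternary action is exactly the point where one uses that $f$ is a $\Gamma$-homomorphism --- applying $\{a,b,-\}_\gamma$ to $n+f(m)=n'+f(m')$ and rewriting $\{a,b,f(m)\}_\gamma=f(\{a,b,m\}_\gamma)$ shows $\{a,b,n\}_\gamma\mathbin{\rho}\{a,b,n'\}_\gamma$ with witnesses $\{a,b,m\}_\gamma,\{a,b,m'\}_\gamma$. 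Hence $N/\rho$ is a $\Gamma$-module and the projection $q\colon N\to N/\rho$ is a $\Gamma$-homomorphism with $q\circ f=0$. (iii) Universal property: if $g\colon N\to P$ satisfies $g\circ f=0$, then $n\mathbin{\rho}n'$ forces $g(n)=g(n)+g(f(m))=g(n'+f(m')-\text{``}f(m)\text{''})$ --- more precisely $g(n)+g(f(m))=g(n')+g(f(m'))$ with both $g(f(\cdot))$ terms zero, so $g(n)=g(n')$ --- whence $g$ descends uniquely along $q$. Once these are in place, $\Gamma\text{-}\mathbf{Mod}_T$ has all the claimed structure. I expect step (ii), the stability of $\rho$ under the ternary $\Gamma$-action, to be the only place where any thought is needed; everything else is a bookkeeping check that the coordinatewise/pointwise constructions respect the module axioms.
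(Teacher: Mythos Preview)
Your proof is correct and follows the same high-level strategy the paper gestures at --- pointwise hom-monoid structure, componentwise biproducts, and kernel/cokernel constructions --- but you supply substantially more detail than the paper, whose proof is a two-sentence sketch (``Additivity follows from pointwise operations on morphisms. Kernel and cokernel constructions are inherited from the additive structure\ldots''). In particular, your use of the Bourne-type congruence for cokernels is the right move in this semiring/commutative-monoid setting where subtraction is unavailable, and the paper does not make this explicit; your caveat that ``additive'' here must be read in the semiadditive (commutative-monoid--enriched) sense is likewise a point the paper leaves implicit. So the approaches coincide in spirit, but your argument is the one that actually closes the gaps.
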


\begin{proof}
Additivity follows from pointwise operations on morphisms.
Kernel and cokernel constructions are inherited from the additive
structure, verifying the axioms of an exact category.
\end{proof}

\subsection{2.4. Localization and Spectra}

For $S\subseteq T$ multiplicatively closed,
the localization $S^{-1}T$
is constructed by equivalence classes of pairs $(a,s)$,
where $(a,s)\sim(b,t)$ if there exists $u\in S$
such that
$\{u,a,t\}_\gamma=\{u,b,s\}_\gamma$ for all $\gamma$.
The canonical morphism $\lambda_S:T\to S^{-1}T$,
$a\mapsto(a,1)$, satisfies the universal property:
for any $\Gamma$-semiring~$R$ and morphism
$f:T\to R$ with $f(S)$ invertible,
there exists a unique $\tilde f:S^{-1}T\to R$
such that $f=\tilde f\circ\lambda_S$.

\begin{definition}[Spectrum]
The \emph{prime spectrum} $\Spec_\Gamma(T)$
is the set of all prime $\Gamma$-ideals of~$T$
with the Zariski topology
\[
V_\Gamma(I)=\{P\in\Spec_\Gamma(T)\mid I\subseteq P\},
\quad
D_\Gamma(I)=\Spec_\Gamma(T)\setminus V_\Gamma(I).
\]
\end{definition}

\begin{proposition}
For every $\Gamma$-semiring $T$,
the space $(\Spec_\Gamma(T),\mathcal O_T)$,
where $\mathcal O_T(D_\Gamma(a))=T_a$,
forms a local ringed $\Gamma$-space.
\end{proposition}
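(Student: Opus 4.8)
The plan is to follow the classical Grothendieck construction of the structure sheaf on an affine scheme, adapted to the ternary~$\Gamma$-setting, and then to identify the stalks. First I would record that the principal sets $D_\Gamma(a)=\Spec_\Gamma(T)\setminus V_\Gamma((a))$, as $a$ ranges over $T$, form a basis of the Zariski topology: since $V_\Gamma(I)=\bigcap_{a\in I}V_\Gamma((a))$, every closed set is an intersection of principal closed sets, so every open set is a union of the $D_\Gamma(a)$. I would also note the containment criterion $D_\Gamma(b)\subseteq D_\Gamma(a)\iff b\in\rad((a))$ — that is, some ternary power of $b$ lies in the $\Gamma$-ideal generated by $a$ — which is the $\Gamma$-analogue of the usual statement and follows from the prime-ideal/radical correspondence recalled in the Preliminaries.

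Second, I would make precise the localizations appearing in the statement. For $a\in T$ let $S_a\subseteq T$ be the smallest multiplicatively closed set — in the ternary sense, closed under $\{-,-,-\}_\gamma$ with repeated entries — containing $a$; then $T_a:=S_a^{-1}T$ is well defined by the localization construction of~\S2.4, together with its universal property. The essential well-definedness point is that $D_\Gamma(a)=D_\Gamma(b)$ forces a canonical isomorphism $T_a\cong T_b$: each of $a,b$ lies in the radical of the ideal generated by the other, so the universal property produces mutually inverse morphisms $T_a\rightleftarrows T_b$. More generally, when $D_\Gamma(b)\subseteq D_\Gamma(a)$ the image of $a$ in $T_b$ is invertible, so the universal property yields a unique restriction morphism $\rho^a_b\colon T_a\to T_b$; these are visibly compatible with composition, giving a presheaf of $\Gamma$-semirings on the basis of principal opens.

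Third, I would verify the sheaf axioms on this basis, which by the standard criterion extends uniquely to a sheaf $\mathcal O_T$ on all opens. Separation and gluing reduce to the following: if $D_\Gamma(a)=\bigcup_i D_\Gamma(a_i)$, then an element of $T_a$ is determined by its images in the $T_{a_i}$, and compatible families glue. The key ingredients are quasi-compactness of $D_\Gamma(a)$ (so one may pass to a finite subcover) together with the ternary analogue of the relation expressing that the $a_i$ generate the unit ideal in $T_a$. In the semiring setting, where additive inverses are unavailable, this relation must be manipulated with care, and establishing it — together with quasi-compactness — is the step I expect to be the main obstacle; granting it, the classical equalizer computation carries over essentially verbatim.

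Finally, I would compute the stalks. For $P\in\Spec_\Gamma(T)$ the stalk is the filtered colimit $\mathcal O_{T,P}=\varinjlim_{a\notin P}T_a=T_P:=(T\setminus P)^{-1}T$, where $T\setminus P$ is multiplicatively closed precisely because $P$ is prime in the ternary sense, i.e.\ $\{a,b,c\}_\gamma\in P$ forces $a\in P$, $b\in P$, or $c\in P$. It then remains to show $T_P$ is local: one checks that the non-units of $T_P$ are exactly the elements of $PT_P$, which is therefore a $\Gamma$-ideal containing every proper $\Gamma$-ideal of $T_P$, hence the unique maximal one. This exhibits $(\Spec_\Gamma(T),\mathcal O_T)$ as a $\Gamma$-space all of whose stalks are local $\Gamma$-semirings — a local ringed $\Gamma$-space — completing the proof.
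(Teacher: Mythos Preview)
Your argument is correct and follows the standard Grothendieck route. The paper's own proof at this point is essentially a two-line hand-wave: ``Localization at a prime $P$ produces $T_P=S_P^{-1}T$ with $S_P=T\setminus P$. Standard verification of stalk properties yields the result.'' So on the stalk computation and locality of $T_P$ you and the paper agree exactly; the difference is that you actually carry out the sheaf construction, whereas the paper postpones it to \S3.3 and there takes the alternative Hartshorne-style definition via sections $s:U\to\bigsqcup_{P\in U}T_P$ that are locally fractions, rather than your basis-presheaf approach. Both constructions are classically equivalent, and each has its usual trade-off: yours makes $\mathcal O_T(D_\Gamma(a))\cong T_a$ true by definition but requires the equalizer/gluing verification you flag, while the paper's makes the sheaf axioms automatic but must then \emph{prove} $\mathcal O_T(D_\Gamma(a))\cong T_a$ separately. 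Your caution about the gluing step in the absence of additive inverses is well placed --- neither your sketch nor the paper confronts this semiring-specific difficulty head-on, so in that respect the two treatments are on equal footing.
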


\begin{proof}
Localization at a prime $P$
produces $T_P=S_P^{-1}T$ with $S_P=T\setminus P$.
Standard verification of stalk properties yields the result.
\end{proof}

\subsection{2.5. Categorical Environment}

\begin{definition}[Additive and exact structure]
The category $T$-$\Gamma\mathbf{Mod}$
is additive with zero object $0$
and exact with short exact sequences
\[
0\to M'\xrightarrow{f}M\xrightarrow{g}M''\to0
\]
preserved under finite biproducts.
\end{definition}

\begin{proposition}
$T$-$\Gamma\mathbf{Mod}$ is a closed monoidal category
under the tensor product $\otimes_\Gamma$ defined by
the quotient of the free module generated by
$M\times N$ modulo the ternary-bilinear relations.
\end{proposition}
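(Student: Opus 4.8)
The plan is to establish the three constituents of a closed monoidal structure in turn: the tensor bifunctor $\otimes_\Gamma$ with its universal (ternary-bilinear) property; the associativity and unit constraints together with their coherence; and an internal-hom functor right adjoint to $(-)\otimes_\Gamma N$. Throughout one follows the classical template for module categories, the commutativity of $T$ being what lets left $\Gamma$-modules alone suffice.

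First I would make the universal property precise. Present $M\otimes_\Gamma N$ as the free commutative additive monoid on $M\times N$ --- write $m\otimes n$ for the generator $(m,n)$ --- modulo the congruence generated by additive bilinearity in each argument and the \emph{balancing relations} $\{a,b,m\}_\gamma\otimes n=m\otimes\{a,b,n\}_\gamma$ for all $a,b\in T$, $\gamma\in\Gamma$, and equip it with the $\Gamma$-action $\{a,b,m\otimes n\}_\gamma:=\{a,b,m\}_\gamma\otimes n$ --- well defined on the quotient precisely because of the balancing relations, and satisfying distributivity and the ternary-$\Gamma$ associativity axiom because these hold in $M$. Bifunctoriality is then routine, since $\Gamma$-homomorphisms act on representatives and respect all generating relations. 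The universal property reads: for any $\Gamma$-module $P$, a \emph{ternary-bilinear} map $\phi\colon M\times N\to P$ --- additive in each variable and satisfying $\phi(\{a,b,m\}_\gamma,n)=\phi(m,\{a,b,n\}_\gamma)=\{a,b,\phi(m,n)\}_\gamma$ --- corresponds uniquely to a $\Gamma$-homomorphism $M\otimes_\Gamma N\to P$ with $m\otimes n\mapsto\phi(m,n)$, immediately from the presentation.

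Next I would construct the internal hom. For $\Gamma$-modules $N,P$, let $\underline{\Hom}_\Gamma(N,P)$ be the set of $\Gamma$-homomorphisms $N\to P$ with pointwise addition and ternary action $(\{a,b,f\}_\gamma)(n):=\{a,b,f(n)\}_\gamma$ --- equal to $f(\{a,b,n\}_\gamma)$ since $f$ is a $\Gamma$-homomorphism --- whose module axioms again reduce to distributivity and ternary-$\Gamma$ associativity in $P$. The tensor--hom adjunction $\Hom_\Gamma\bigl(M\otimes_\Gamma N,\,P\bigr)\cong\Hom_\Gamma\bigl(M,\,\underline{\Hom}_\Gamma(N,P)\bigr)$ is the currying bijection $\Psi\mapsto\bigl(m\mapsto(n\mapsto\Psi(m\otimes n))\bigr)$, with inverse given by uncurrying; that the forward assignment lands in $\Gamma$-homomorphisms $M\to\underline{\Hom}_\Gamma(N,P)$, and that uncurrying a given $\Gamma$-homomorphism yields a ternary-bilinear map on $M\times N$ (hence one factoring through $M\otimes_\Gamma N$), both follow from the balancing relation, and naturality in $M$ and $P$ is clear. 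Associativity $(M\otimes_\Gamma N)\otimes_\Gamma L\cong M\otimes_\Gamma(N\otimes_\Gamma L)$ follows because both sides corepresent the functor of ternary-trilinear maps out of $M\times N\times L$; the pentagon coheres since every comparison is the identity on elementary tensors $m\otimes n\otimes\ell$, and when $T$ is commutative the flip $m\otimes n\mapsto n\otimes m$ supplies the symmetry.

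The step I expect to be the genuine obstacle is the \emph{unit object}. Unlike the binary case, where the base ring is its own tensor unit, the action $\{a,b,-\}_\gamma$ of a ternary $\Gamma$-semiring consumes two scalars, so there is no canonical evaluation $T\otimes_\Gamma M\to M$ and hence no self-evident unit. I would resolve this over a \emph{unital} ternary $\Gamma$-semiring --- one carrying a distinguished triple $(e_1,e_2,\gamma_0)$ with $\{e_1,e_2,x\}_{\gamma_0}=x$ on $T$ and on every $\Gamma$-module --- so that $T$, viewed as a $\Gamma$-module via its operation in the last slot, becomes the unit, with constraints $\lambda_M\colon T\otimes_\Gamma M\to M$ built from the identity triple and its inverse $m\mapsto e_1\otimes m$ (the slot being pinned down by the balancing relations), the triangle and pentagon identities then collapsing on elementary tensors. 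Checking that these unit constraints are well defined modulo the tensor congruence --- reconciling a two-scalar action with a one-object unit --- is the single place where the ternary structure forces a genuine computation rather than formal bookkeeping; absent an identity triple the construction still yields a \emph{semimonoidal} closed category (associative $\otimes_\Gamma$ and internal hom, unit omitted), which already supplies everything needed for the derived functors $\Ext_\Gamma$ and $\Tor^\Gamma$ of the next section.
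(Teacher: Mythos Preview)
Your argument is correct and, in fact, considerably more complete than what the paper supplies. In the paper this proposition is stated without proof; the only justification appears later, in Section~5, where the tensor product is defined exactly as you describe (free object on $M\times N$ modulo bilinearity and the balancing relation $(a\alpha m\beta b)\otimes n=m\otimes(a\alpha n\beta b)$) and the tensor--Hom adjunction is dispatched in two lines via the same currying map $\Phi(\psi)(l)(m)=\psi(l\otimes m)$ that you use. Associativity, coherence, and the unit are not discussed there at all.

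Your treatment therefore follows the paper's (implicit) route but fills in the steps the paper omits: the explicit universal property, the corepresentability argument for associativity, and the pentagon/triangle verifications on elementary tensors. More notably, you identify a genuine issue the paper does not acknowledge --- the absence of a canonical unit object in the ternary setting --- and you handle it correctly by either positing a unital identity triple $(e_1,e_2,\gamma_0)$ or by retreating to a semimonoidal closed structure. The paper simply asserts the closed monoidal structure and never confronts this point; your caveat is the more honest statement, and as you observe, the semimonoidal closed structure is already sufficient for the derived functors that follow.
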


\begin{remark}
This categorical framework provides the
ambient homological universe
for constructing derived functors~$\Ext_\Gamma$
and~$\Tor^\Gamma$,
and for defining sheaf cohomology
on $\Spec_\Gamma(T)$,
as developed in Section 5 onward.
\end{remark}

\section{Affine  \texorpdfstring{$\Gamma$}{Gamma}-Schemes and Structure Sheaf}

The bridge between algebra and geometry for commutative ternary~$\Gamma$-semirings
rests upon an appropriate notion of spectrum and its structure sheaf.
In this section we build that foundation meticulously, beginning with localization,
followed by the construction of the sheaf of local sections, and concluding with
the categorical equivalence defining affine~$\Gamma$-schemes.

\subsection{Localization in Ternary  \texorpdfstring{$\Gamma$}{Gamma}-Semirings}

Localization is the analytic microscope of algebraic geometry.
It permits us to ``zoom in'' at a prime ideal~$P$ and capture
the local behavior of the algebraic structure.

\begin{definition}[Multiplicative system]
Let $(T,+,\Gamma)$ be a commutative ternary~$\Gamma$-semiring.
A subset $S\subseteq T$ is a \emph{multiplicative system} if
\begin{enumerate}
  \item $0\notin S$ and $1_T\in S$ when an identity exists;
  \item if $a,b,c\in S$ and $\alpha,\beta\in\Gamma$, then $a\alpha b\beta c\in S$.
\end{enumerate}
\end{definition}

\begin{definition}[Localization]
Let $S$ be a multiplicative system in~$T$.
The \emph{localization} $S^{-1}T$ is the set of equivalence classes of pairs
$(a,s)$ with $a\in T$, $s\in S$, under the relation
\[
(a,s)\sim(b,t)
   \iff \exists\,u\in S:\;u\alpha(a\alpha t\beta t)\beta t = u\alpha(b\alpha s\beta s)\beta s.
\]
Addition and the ternary~$\Gamma$-product are defined componentwise by
\[
(a,s)+(b,t)=
   (a\alpha t\beta t+b\alpha s\beta s, s\alpha t\beta t),\qquad
(a,s)\alpha(b,t)\beta(c,u)
   =(a\alpha b\beta c, s\alpha t\beta u).
\]
\end{definition}

\begin{proposition}[Universal property of localization]
Given a morphism of commutative ternary~$\Gamma$-semirings
$f:T\to T'$ such that $f(S)\subseteq U(T')$ (the set of invertible elements),
there exists a unique morphism $\bar f:S^{-1}T\to T'$ with $\bar f(a/s)=f(a)\,f(s)^{-1}$.
\end{proposition}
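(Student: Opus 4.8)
The plan is to define $\bar f$ on representatives and then verify, in turn, that it is well defined, that it is a homomorphism of commutative ternary $\Gamma$-semirings, and that it is the unique morphism lying over $f$. Put $\bar f\bigl((a,s)\bigr)=f(a)\,f(s)^{-1}$, which is legitimate in $T'$ precisely because $f(S)\subseteq U(T')$, so each $f(s)^{-1}$ exists and, by commutativity of $T'$, may be slid past any other factor. The structural fact I will use repeatedly is that an element of $U(T')$ paired with its inverse behaves as a neutral factor for the ternary $\Gamma$-product, so a triple product containing such a pair collapses; I would record this as a short preliminary observation. Well-definedness then goes as follows: if $(a,s)\sim(b,t)$ via some $u\in S$ with $u\alpha(a\alpha t\beta t)\beta t=u\alpha(b\alpha s\beta s)\beta s$, applying $f$ (which respects $+$ and the ternary operation) transports this to an identity among $f(u),f(a),f(b),f(s),f(t)$ in $T'$; cancelling the invertible factors $f(u),f(s),f(t)$ one at a time — using ternary associativity and commutativity to rearrange the triple products so the units can be absorbed — reduces it to $f(a)\,f(s)^{-1}=f(b)\,f(t)^{-1}$, which is exactly representative-independence of $\bar f$.

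Next I would check that $\bar f$ is a morphism. For addition, $(a,s)+(b,t)=(a\alpha t\beta t+b\alpha s\beta s,\ s\alpha t\beta t)$, so $\bar f$ of the sum is the image of the numerator times the inverse of the image of $s\alpha t\beta t$; distributing that inverse across the sum and cancelling the common units yields $f(a)f(s)^{-1}+f(b)f(t)^{-1}=\bar f(a,s)+\bar f(b,t)$. For the ternary $\Gamma$-product the bookkeeping is lighter, since $(a,s)\,\alpha\,(b,t)\,\beta\,(c,u)=(a\alpha b\beta c,\ s\alpha t\beta u)$ is computed componentwise; applying $f$ and expressing the inverse of $f(s\alpha t\beta u)$ through $f(s)^{-1},f(t)^{-1},f(u)^{-1}$ (a product of units is a unit, and $T'$ is commutative) gives $\bar f(a,s)\,\alpha\,\bar f(b,t)\,\beta\,\bar f(c,u)$ exactly. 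Neutrality passes through as well, and taking $s=1$ shows $\bar f\circ\lambda_S=f$, where $\lambda_S:T\to S^{-1}T$ is the canonical map $a\mapsto a/1$.

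For uniqueness, let $g:S^{-1}T\to T'$ be any morphism with $g\circ\lambda_S=f$. In $S^{-1}T$ the element $s/1=\lambda_S(s)$ is invertible with inverse $1/s$, and since $g$ is a homomorphism it carries this invertible pair to an invertible pair of $T'$, forcing $g(1/s)=g(s/1)^{-1}=f(s)^{-1}$. Writing $a/s$ as the appropriate ternary $\Gamma$-combination of $a/1$ and $1/s$ that collapses to $a/s$, and applying $g$, gives $g(a/s)=f(a)f(s)^{-1}=\bar f(a/s)$; hence $g=\bar f$.

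The step I expect to be the real obstacle is the cancellation bookkeeping in the well-definedness argument: because the equivalence relation is written with the doubled factors $a\alpha t\beta t$ rather than a plain product, stripping the superfluous invertible factors from the transported equation in $T'$ requires an explicit and careful use of ternary associativity, and one must first pin down exactly how an element of $U(T')$ acts as a neutral factor for the ternary operation so that these reductions are legitimate. Once that units lemma is settled, the remaining steps are routine distributive and associative verifications.
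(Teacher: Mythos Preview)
Your proposal is correct and follows the same route as the paper, which gives only a two-sentence sketch (``Well-definedness follows from the multiplicativity of~$f$ and the equivalence relation. Uniqueness results from the universal characterization of the quotient construction.''). Your version is far more detailed, and your flagging of the units-cancellation step---that one must first make precise what invertibility means for the ternary $\Gamma$-product before the reductions go through---is a genuine technical point that the paper itself does not address.
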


\begin{proof}
Well-definedness follows from the multiplicativity of~$f$ and the equivalence relation.
Uniqueness results from the universal characterization of the quotient construction.
\end{proof}

\begin{definition}[Localization at a prime ideal]
For a prime ideal $P\subset T$, set $S_P=T\setminus P$.
The localization $T_P:=S_P^{-1}T$ is called the \emph{local ternary~$\Gamma$-semiring at~$P$}.
\end{definition}
.
\begin{proposition}[Structure of $T_P$]
$T_P$ is local in the sense that it possesses a unique maximal ideal
\[
P_P=\left\{\frac{a}{s}\in T_P \,\middle|\, a\in P,\; s\notin P \right\}.
\]
Moreover, the correspondence $P\mapsto P_P$ preserves inclusion.
\end{proposition}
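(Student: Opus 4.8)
The plan is to verify the two assertions separately: (i) that $P_P$ is the unique maximal ideal of $T_P$, and (ii) that $P \mapsto P_P$ respects inclusions. For (i), I would first check that $P_P$ is a $\Gamma$-ideal of $T_P$: closure under addition follows from the componentwise addition formula, since if $a, a' \in P$ and $s, s' \notin P$, then the numerator $a\alpha s'\beta s' + a'\alpha s\beta s$ lies in $P$ (as $P$ is an ideal absorbing ternary products) while the denominator $s\alpha s'\beta s'$ lies outside $P$ (since $P$ is prime and none of $s, s', s'$ is in $P$); closure under the ternary $\Gamma$-action is similar, using that $P$ absorbs ternary products from one slot. The crucial point is then that every element $a/s \in T_P \setminus P_P$ is invertible: if $a/s \notin P_P$ with $s \notin P$, then by definition of $P_P$ we must have $a \notin P$, so $a \in S_P$, whence $s/a \in T_P$ and $(a/s)$ times $(s/a)$ (with the appropriate $\Gamma$-parameters) equals the identity of $T_P$. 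Thus $T_P \setminus P_P$ consists entirely of units, which forces $P_P$ to contain every proper ideal of $T_P$, establishing both that $P_P$ is maximal and that it is the unique maximal ideal.

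For (ii), suppose $P \subseteq Q$ are primes of $T$; note this forces $S_Q = T \setminus Q \subseteq T \setminus P = S_P$, so there is a canonical localization morphism $\varphi\colon T_P \to T_Q$ sending $a/s \mapsto a/s$ (legitimate because elements of $S_Q$ are already invertible in $T_P$, as $S_Q \subseteq S_P$). I would then show $\varphi^{-1}(Q_Q) = P_P$, or more directly that an element $a/s \in T_P$ with $s \notin P$ lies in $P_P$ iff $a \in P$, and compare with the condition defining $Q_Q$; since $P \subseteq Q$, any $a/s$ with $a \in P$ has $a \in Q$, giving $P_P \subseteq Q_Q$ under the identification. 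A cleaner formulation, which I would adopt, is to observe that the contraction of $Q_Q$ along the structure map $\lambda_{S_P}\colon T \to T_P$ recovers $P$ when $P \subseteq Q$ (indeed the contraction of $P_P$ along $\lambda_{S_P}$ is exactly $P$ by the prime-localization correspondence), so inclusion is transported functorially.

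The main obstacle I anticipate is bookkeeping with the somewhat unusual equivalence relation and the $\Gamma$-parameters in the ternary product on $S^{-1}T$: one must repeatedly verify that chosen representatives of classes have numerators and denominators with the required membership properties, and that the ternary associativity and distributivity axioms genuinely transfer to $S^{-1}T$ so that $P_P$ is honestly an ideal and $T_P\setminus P_P$ is honestly closed under the operations needed. In particular, showing $a/s \notin P_P \Rightarrow a \notin P$ requires care: a priori $a/s$ could equal $b/t$ with $b \in P$ even though $a \notin P$, so I would need the equivalence relation to be compatible with membership in $P$ (i.e. if $(a,s)\sim(b,t)$ with $s,t\notin P$ then $a\in P \iff b\in P$), which follows from primeness of $P$ applied to the defining relation $u\alpha(a\alpha t\beta t)\beta t = u\alpha(b\alpha s\beta s)\beta s$ since $u,s,t \notin P$. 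Once this compatibility lemma is in hand, the remaining verifications are routine, and the inclusion-preservation statement follows formally from the functoriality of localization.
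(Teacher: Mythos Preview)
Your proposal is correct and follows the same core strategy as the paper: identify $P_P$ as the set of nonunits by showing that any fraction with numerator outside $P$ is invertible, hence $P_P$ is the unique maximal ideal. Your treatment is in fact more complete than the paper's brief sketch, which checks only that a ternary product of nonunits is a nonunit and that numerators outside $P$ give units; you additionally verify closure of $P_P$ under addition, handle the well-definedness issue (that membership in $P_P$ is independent of the chosen representative, via primeness of $P$), and actually address the inclusion-preservation claim, which the paper's proof omits entirely.
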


\begin{proof}
If $\frac{a}{s}$ and $\frac{b}{t}$ are nonunits, then
$a,b\in P$ implies $(a\alpha b\beta b)\in P$, so their product is again nonunit.
Conversely, any element with numerator outside~$P$ is invertible via the fraction
$(1_T,s)/(a,s)$, establishing locality.
\end{proof}

\begin{remark}
Localization retains the distributive and associative properties of the ternary
product; these follow from the preservation of the defining axioms under fractions.
\end{remark}

\subsection{The Prime Spectrum and its Topology}

\begin{definition}[Prime spectrum]
The \emph{prime spectrum} of a commutative ternary~$\Gamma$-semiring~$T$
is the set
\[
\mathrm{Spec}_\Gamma(T)
   =\{P\subset T\mid P\text{ is a prime ideal}\}.
\]
For any ideal $I\subseteq T$, define the closed subset
$V(I)=\{\,P\in\mathrm{Spec}_\Gamma(T)\mid I\subseteq P\,\}$.

The family $\{V(I)\mid I\subseteq T\}$ forms the closed sets of a topology,
called the \emph{Zariski-type topology}.
\end{definition}

\begin{theorem}
For ideals $I,J\subseteq T$ we have:
\begin{enumerate}
  \item $V(0)=\mathrm{Spec}_\Gamma(T)$ and $V(T)=\varnothing$;
  \item $V(I\cap J)=V(I)\cup V(J)$;
  \item $V\!\left(\sum_\lambda I_\lambda\right)
         =\bigcap_\lambda V(I_\lambda)$.
\end{enumerate}
Hence $\mathrm{Spec}_\Gamma(T)$ is a $T_0$ topological space.
\end{theorem}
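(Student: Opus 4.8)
The plan is to check the three identities one at a time, then read off both the topological axioms and the $T_0$ separation. Three structural facts are all that is needed. First, every (nonempty) $\Gamma$-ideal contains $0$: for $a\in I$ the neutrality axiom gives $\{a,0,b\}_\gamma=0$, while $\{a,0,b\}_\gamma\in I$ by the ideal-absorption axiom, so $0\in I$; in particular every prime ideal contains $0$. Second, $V$ is inclusion-reversing: $I\subseteq J$ implies $V(J)\subseteq V(I)$, directly from the definition of $V$. Third, a prime ideal is proper by definition. Claim (1) is then immediate: every prime contains $0$, so $V(0)=\Spec_\Gamma(T)$, while no proper ideal contains $T$, so $V(T)=\varnothing$.

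For (2), the inclusion $V(I)\cup V(J)\subseteq V(I\cap J)$ follows from the second fact, since $I\cap J\subseteq I$ and $I\cap J\subseteq J$. The reverse inclusion is the substantive point, and I would argue by contraposition. Suppose $P\notin V(I)\cup V(J)$, so there are $a\in I\setminus P$ and $b\in J\setminus P$; fix any $\gamma\in\Gamma$ (possible since $\Gamma\neq\varnothing$) and set $x=\{a,b,b\}_\gamma$. Because $a\in I$ sits in the first slot, ideal-absorption gives $x\in I$; because $b\in J$ occupies the remaining slots, ideal-absorption for $J$ together with its cyclic-permutation clause gives $x\in J$. Hence $x\in I\cap J$. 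Now if $I\cap J\subseteq P$ we would get $x\in P$, and primeness of $P$ would force $a\in P$ or $b\in P$, contradicting the choice of $a$ and $b$. Therefore $I\cap J\not\subseteq P$, i.e. $P\notin V(I\cap J)$, proving (2).

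For (3), recall that $\sum_\lambda I_\lambda$ is the $\Gamma$-ideal of all finite sums of elements of $\bigcup_\lambda I_\lambda$. If $\sum_\lambda I_\lambda\subseteq P$ then $I_\mu\subseteq\sum_\lambda I_\lambda\subseteq P$ for every $\mu$, so $P\in\bigcap_\lambda V(I_\lambda)$; conversely, if every $I_\lambda\subseteq P$ then, since $P$ is closed under addition, every finite sum of elements of $\bigcup_\lambda I_\lambda$ lies in $P$, whence $\sum_\lambda I_\lambda\subseteq P$. This gives the stated equality. Items (1)–(3) together say precisely that the family $\{V(I)\}$ contains $\varnothing$ and the whole space and is closed under finite unions and arbitrary intersections, i.e. it is the closed-set system of a topology, namely the Zariski-type topology on $\Spec_\Gamma(T)$.

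Finally, for the $T_0$ property: from $P\in V(I)\iff I\subseteq P$ one sees that the smallest closed set containing a point $P$ is $V(P)$, so $\overline{\{P\}}=V(P)$. Given distinct primes $P\neq Q$, at least one of $P\subseteq Q$, $Q\subseteq P$ fails; say $P\not\subseteq Q$. Then $Q\notin V(P)$, so $\Spec_\Gamma(T)\setminus V(P)$ is an open set containing $Q$ but not $P$, which is exactly the $T_0$ axiom. The only step requiring genuine care is the reverse inclusion in (2): one must choose the triadic product $x$ so that it is simultaneously absorbed by $I$ and by $J$ (exploiting the cyclic-permutation clause in the definition of a $\Gamma$-ideal) and then invoke the three-variable form of primeness; everything else is routine bookkeeping with the ideal lattice and the definition of the topology.
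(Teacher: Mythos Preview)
Your argument is correct and follows precisely the route the paper indicates: the paper's own proof merely states that the verification ``parallels that of classical ring spectra, with ternary multiplication replacing the binary one'' and highlights the three-variable primeness condition, while you carry out that parallel in full. In particular, your handling of the nontrivial inclusion in~(2) via $x=\{a,b,b\}_\gamma$ and the cyclic-absorption clause is exactly the ternary substitute for the classical product $ab$ that the paper alludes to.
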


\begin{proof}
The verification parallels that of classical ring spectra, with ternary
multiplication replacing the binary one.  
Primeness ensures the key property:
if $a\alpha b\beta c\in P$, then $a\in P$ or $b\in P$ or $c\in P$,
which substitutes for the standard product condition.
\end{proof}

\begin{example}
For $T=\mathbb Z_6$ with $a\alpha b\beta c=(a+b+c)\bmod6$, we obtain
\[
\mathrm{Spec}_\Gamma(T)=\{P_1=\{0,2,4\},\,P_2=\{0,3\}\},
\]
and the topology is discrete: $V(0)=\{P_1,P_2\}$,
$V(P_i)=\{P_i\}$, $V(T)=\varnothing$.
\end{example}

\subsection{The Structure Sheaf $\mathcal{O}_{\mathrm{Spec}_\Gamma(T)}$} 

We now define the sheaf of local sections, generalizing the ringed-space
construction of classical algebraic geometry \cite{Serre1955}. 

\begin{definition}[Presheaf of local sections]
For an open subset $U\subseteq \mathrm{Spec}_\Gamma(T)$, set
\[
\mathcal{O}'(U)
   =\Big\{\,s:U\to\bigsqcup_{P\in U} T_P
     \ \Big|\
     s(P)\in T_P,\ 
     \exists\text{ open }V\ni P,\ a,b\in T
     \text{ s.t. } 
     \forall Q\in V,\ b\notin Q,\ 
     s(Q)=\tfrac{a}{b}\text{ in }T_Q
     \Big\}.
\]
For inclusions $V\subseteq U$ define restriction by $(s|_V)(P)=s(P)$.
\end{definition}

\begin{theorem}
$\mathcal O'$ is a sheaf on $\mathrm{Spec}_\Gamma(T)$,
denoted $\mathcal O_{\mathrm{Spec}_\Gamma(T)}$.
Each stalk $\mathcal O_{\mathrm{Spec}_\Gamma(T),P}$ is canonically isomorphic
to the localized semiring~$T_P$.
\end{theorem}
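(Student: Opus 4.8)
The plan is to mimic the classical proof that $\mathcal{O}_{\operatorname{Spec}}$ is a sheaf and that its stalks are localizations, adapting each step to the ternary $\Gamma$-setting. First I would verify that $\mathcal{O}'$ is a presheaf: the restriction maps are clearly well-defined since the defining local condition ``$s$ is locally a fraction $a/b$'' is inherited by any smaller open set, and functoriality ($(s|_V)|_W = s|_W$, $s|_U = s$) is immediate from the pointwise definition. One should also check that $\mathcal{O}'(U)$ is itself a ternary $\Gamma$-semiring under the componentwise operations on sections — if $s$ is locally $a/b$ near $P$ and $t$ is locally $c/d$ near $P$, then on a common neighborhood $s+t$ is the fraction $(a\alpha d\beta d + c\alpha b\beta b)/(b\alpha d\beta d)$ and the ternary product $\{s,t,u\}_\gamma$ (with $u$ locally $e/f$) is $\{a,c,e\}_\gamma/\{b,d,f\}_\gamma$, using the localization formulas from Section 3.1; the denominators lie outside every relevant prime by multiplicativity of $S_Q = T\setminus Q$.

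Next I would establish the two sheaf axioms on an arbitrary open $U$ with an open cover $U = \bigcup_i U_i$. \emph{Separation}: if $s \in \mathcal{O}'(U)$ restricts to $0$ on each $U_i$, then for every $P \in U$ we have $P \in U_i$ for some $i$, so $s(P) = (s|_{U_i})(P) = 0$ in $T_P$; hence $s = 0$. \emph{Gluing}: given sections $s_i \in \mathcal{O}'(U_i)$ agreeing on overlaps $U_i \cap U_j$, define $s(P) := s_i(P)$ for any $i$ with $P \in U_i$; this is well-defined by the agreement hypothesis, and the local-fraction condition is satisfied because near any $P \in U_i$ the section $s$ coincides with $s_i$, which is locally a fraction there. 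So $s \in \mathcal{O}'(U)$ and $s|_{U_i} = s_i$. These arguments are formally identical to the ring case and use nothing special about the ternary operation beyond the componentwise formulas.

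The substantive part is the stalk computation $\mathcal{O}_{\operatorname{Spec}_\Gamma(T),P} \cong T_P$. I would construct the natural map $\varphi\colon \operatorname{colim}_{U \ni P} \mathcal{O}'(U) \to T_P$ sending a germ $(U,s)$ to $s(P) \in T_P$; this is a well-defined $\Gamma$-semiring homomorphism by the componentwise description of operations. \emph{Surjectivity}: every element of $T_P$ has the form $a/b$ with $b \notin P$, and on the basic open set $D(b) = \{Q : b \notin Q\}$ (which contains $P$) the assignment $Q \mapsto a/b$ defines a section whose germ maps to $a/b$; one should check $D(b)$ is indeed open, i.e. that $D(b) = \operatorname{Spec}_\Gamma(T) \setminus V(\langle b\rangle)$ using primeness. \emph{Injectivity}: suppose germs of $s$ at $P$ and $t$ at $P$ have $s(P) = t(P)$ in $T_P$. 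Shrinking, we may assume $s = a/b$ and $t = c/d$ on a common neighborhood $V$ with $b,d \notin Q$ for all $Q \in V$. The equality $a/b = c/d$ in $T_P$ means, by the localization relation of Section 3.1, that there exists $u \notin P$ with $u\,\alpha\,(a\alpha d\beta d)\,\beta\,d = u\,\alpha\,(c\alpha b\beta b)\,\beta\,d$ (in the appropriate ternary normal form).

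The main obstacle I anticipate is exactly this last step: one must pass from the single element $u \notin P$ witnessing equality at the stalk $T_P$ to a whole \emph{open neighborhood} on which $s$ and $t$ agree. In the classical ring case one uses $D(u) \cap V$; here I would take the basic open set $D(u) = \{Q : u \notin Q\}$, which contains $P$, intersect it with $V$, and argue that on $D(u) \cap V$ the defining relation for equality of fractions holds at every $Q$ because the \emph{same} witness $u$ lies outside $Q$ by construction. This requires care with the precise (somewhat unusual) form of the equivalence relation $(a,s)\sim(b,t)$ given in Definition in Section 3.1 — in particular checking that it is genuinely an equivalence relation and transitive in this ternary setting, which may itself need the identity element or a cofinality argument on witnesses. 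Once the relation is confirmed transitive and the witness $u$ is shown to propagate over $D(u)\cap V$, injectivity follows and the stalk isomorphism is complete. I would flag that all the ``standard verification of stalk properties'' invoked in the earlier Proposition on $(\operatorname{Spec}_\Gamma(T),\mathcal{O}_T)$ is precisely what is being carried out in detail here.
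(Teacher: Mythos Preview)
Your proposal is correct and follows exactly the classical argument that the paper invokes; the paper's own proof is a two-sentence sketch (``locality and gluing are checked using the standard argument \ldots\ the direct limit of these neighborhoods is $T_P$''), and you have simply unpacked that sketch in full detail, including the presheaf verification, the separation and gluing axioms, and the surjectivity/injectivity of the stalk map via basic opens $D(b)$ and $D(u)$. Your identification of the transitivity of the localization relation and the propagation of the witness $u$ over $D(u)\cap V$ as the only genuinely delicate points is apt and goes beyond what the paper makes explicit.
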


\begin{proof}
The locality and gluing axioms are checked using the standard argument:
sections agreeing on overlaps are determined by compatible pairs $(a,b)$
with $b\notin P$.
Since every neighborhood of~$P$ contains such a representation,
the direct limit of these neighborhoods is~$T_P$.
\end{proof}

\begin{proposition}
For any $a\in T$, define
$D(a)=\{\,P\in\mathrm{Spec}_\Gamma(T)\mid a\notin P\,\}$.
Then $\{D(a)\}_{a\in T}$ forms a basis for the Zariski-type topology, and
\[
\mathcal O_{\mathrm{Spec}_\Gamma(T)}(D(a))\cong T_a,
\]
the localization of~$T$ at the multiplicative system generated by~$a$.
\end{proposition}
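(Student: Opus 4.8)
The plan is to prove the two assertions in turn: the basis property is elementary point-set topology, while the identification $\mathcal{O}_{\mathrm{Spec}_\Gamma(T)}(D(a))\cong T_a$ is the ternary analogue of the classical computation of the sections of the structure sheaf over a distinguished open, which I will organize around a natural comparison homomorphism $\psi\colon T_a\to\mathcal{O}_{\mathrm{Spec}_\Gamma(T)}(D(a))$ shown to be bijective. For the basis property: each $D(a)$ is open since $a\in P\iff\langle a\rangle\subseteq P$, so $D(a)=\mathrm{Spec}_\Gamma(T)\setminus V(\langle a\rangle)$; and for an arbitrary open set $U=\mathrm{Spec}_\Gamma(T)\setminus V(I)$ one has $P\in U\iff I\not\subseteq P\iff(\exists\,b\in I)\ b\notin P$, whence $U=\bigcup_{b\in I}D(b)$. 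I would also record two facts for later use. First, $D(a)\cap D(b)=D(\{a,b,b\}_\gamma)$ for any $\gamma\in\Gamma$: a prime $P$ omits $\{a,b,b\}_\gamma$ iff it omits both $a$ and $b$, because $T\setminus P$ is a multiplicative system precisely when $P$ is prime; hence finite intersections of basic opens are basic. Second, each $D(a)$ is quasi-compact: from a cover $D(a)=\bigcup_iD(h_i)$ one gets $a\in\sqrt{\sum_i\langle h_i\rangle}$, so by the radical characterization of Paper~A a ternary power of $a$ already lies in $\sum_i\langle h_i\rangle$ for finitely many $i$, and those $D(h_i)$ cover $D(a)$.

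To build $\psi$: for $P\in D(a)$ we have $a\notin P$, and by induction on the ternary-word construction of elements of the multiplicative system $S_a$ generated by $a$, primeness gives $S_a\cap P=\varnothing$; the universal property of localization then yields a canonical map $T_a=S_a^{-1}T\to T_P$. Define $\psi(b/s)$ to be the section $P\mapsto(\text{image of }b/s\text{ in }T_P)$; this lies in $\mathcal{O}_{\mathrm{Spec}_\Gamma(T)}(D(a))$ because the single pair $(b,s)$ represents it on all of $D(a)$, as $s\in S_a$ avoids every $P\in D(a)$. The componentwise formulas in the definition of localization show $\psi$ respects $+$ and the ternary $\Gamma$-product. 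For injectivity, suppose $\psi(b/s)=\psi(b'/s')$. Then for each $P\in D(a)$ some $u_P\in T\setminus P$ equalizes the two fractions in the sense of the localization relation, so the set $\mathfrak a$ of \emph{all} elements of $T$ equalizing $(b,s)$ and $(b',s')$---a $\Gamma$-ideal, by distributivity and ternary associativity---is contained in no $P\in D(a)$. Hence every prime containing $\mathfrak a$ contains $a$, that is, $a\in\sqrt{\mathfrak a}$; so some ternary power of $a$, necessarily an element of $S_a$, lies in $\mathfrak a$, and that element is exactly the witness certifying $b/s=b'/s'$ in $T_a$.

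For surjectivity, take $t\in\mathcal{O}_{\mathrm{Spec}_\Gamma(T)}(D(a))$. Cover $D(a)$ by basic opens on which $t$ is represented by a fraction; since $D(h)\subseteq D(c)$ forces $h\in\sqrt{\langle c\rangle}$, one may refine so that $t$ is represented over $D(h_i)$ by a fraction $g_i/w_i$ with $w_i$ a ternary power of $h_i$; by quasi-compactness finitely many $D(h_i)$ suffice. On each overlap $D(h_i)\cap D(h_j)$ the two representatives agree, so the injectivity statement already proved---applied to the localization at the relevant basic open---produces an explicit ternary relation among the $g_i$, $g_j$, $w_i$, $w_j$. Finally, from $a\in\sqrt{\sum_i\langle h_i\rangle}$ one writes a ternary power of $a$ as a finite combination of the $h_i$ and, using the overlap relations, assembles the local data $g_i/w_i$ into a single fraction $g/s$ with $s\in S_a$ for which $\psi(g/s)=t$.

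The step I expect to be the genuine obstacle is this final gluing. The classical argument (for instance Hartshorne, Prop.~II.2.2, or Atiyah--Macdonald, Ch.~5) uses subtraction without comment---differences such as $a^nb-a^mb'$ and partitions of unity $1=\sum r_ih_i^{k_i}$---and neither is available over a semiring. The remedy is structural rather than computational: everywhere replace annihilator ideals by the ``equalizer ideal'' $\mathfrak a$ introduced above, replace the conclusion $a^N\in\mathfrak a$ by the radical membership $a\in\sqrt{\mathfrak a}$, and carry the $\Gamma$-parameters explicitly through each identity; with that dictionary in hand the classical bookkeeping transfers essentially verbatim. I would close by disposing of the degenerate case directly: if $a\in\sqrt 0$ then $D(a)=\varnothing$, $T_a$ is the zero object, and $\mathcal{O}_{\mathrm{Spec}_\Gamma(T)}(\varnothing)=0$ by the sheaf axiom on the empty cover, so the asserted isomorphism holds trivially there.
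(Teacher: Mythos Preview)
Your argument is correct and follows the same underlying strategy as the paper---build the comparison map $T_a\to\mathcal O_{\mathrm{Spec}_\Gamma(T)}(D(a))$ from the universal property of localization, then verify it is a bijection---but your execution is far more complete than the paper's two-sentence sketch. The paper's proof does not address the basis claim at all, and for the isomorphism it says only that $T_a$ embeds into each stalk $T_P$ via the universal property and that ``any local representation $b/a^n$ near $P$ is obtained from $T_a$,'' without engaging injectivity, surjectivity, or quasi-compactness.

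What you add is genuinely substantive: the explicit basis argument, the identification $D(a)\cap D(b)=D(\{a,b,b\}_\gamma)$, quasi-compactness of $D(a)$, and above all the semiring-aware adaptations---replacing annihilator ideals by the equalizer ideal $\mathfrak a$, trading equalities of differences for radical membership $a\in\sqrt{\mathfrak a}$, and flagging the absence of subtraction in the gluing step. These are exactly the points a careful referee would demand and the paper glosses over; your proposal is the more rigorous of the two.
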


\begin{proof}
If $P\in D(a)$, the localization $T_a$ embeds into $T_P$ via the universal
property.  Conversely, any local representation $\frac{b}{a^n}$ near~$P$
is obtained from $T_a$, giving the desired identification.
\end{proof}

\begin{remark}
Thus $(\mathrm{Spec}_\Gamma(T),\mathcal O_{\mathrm{Spec}_\Gamma(T)})$
becomes a \emph{locally~$\Gamma$-semiringed space} whose local model is~$T_P$.
\end{remark}

\subsection{Affine $\Gamma$-Schemes}

\begin{definition}[Affine $\Gamma$-scheme]
An \emph{affine~$\Gamma$-scheme} is a locally~$\Gamma$-semiringed space
isomorphic to $(\mathrm{Spec}_\Gamma(T),\mathcal O_{\mathrm{Spec}_\Gamma(T)})$
for some commutative ternary~$\Gamma$-semiring~$T$.
\end{definition}

\begin{theorem}[Functoriality]
The assignment
\[
T\longmapsto (\mathrm{Spec}_\Gamma(T),\mathcal O_{\mathrm{Spec}_\Gamma(T)})
\]
defines a contravariant functor
\[
\mathbf{Ternary}\Gamma\mathbf{Semirings}^{\mathrm{op}}
   \longrightarrow \mathbf{Aff}_\Gamma,
\]
where $\mathbf{Aff}_\Gamma$ \cite{Lurie2009} denotes the category of affine~$\Gamma$-schemes
and morphisms of locally~$\Gamma$-semiringed spaces.
\end{theorem}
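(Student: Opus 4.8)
The plan is to follow the Hartshorne-style construction of $\mathrm{Spec}$ as a functor, adapted to ternary $\Gamma$-semirings, in three stages: define the functor on morphisms, verify the functor axioms, and confirm that the maps produced are morphisms in $\mathbf{Aff}_\Gamma$, i.e. morphisms of locally $\Gamma$-semiringed spaces with \emph{local} stalk maps. For a homomorphism $\varphi\colon T\to T'$ of commutative ternary $\Gamma$-semirings I would first take the underlying continuous map to be $f=\varphi^{*}\colon \mathrm{Spec}_\Gamma(T')\to\mathrm{Spec}_\Gamma(T)$, $P'\mapsto\varphi^{-1}(P')$. The supporting lemma is that the preimage of a prime $\Gamma$-ideal under a $\Gamma$-homomorphism is again a prime $\Gamma$-ideal: closure under $+$ is immediate, closure under the ternary action uses $\varphi(\{a,b,c\}_\gamma)=\{\varphi a,\varphi b,\varphi c\}_\gamma$, and primeness follows since $\{a,b,c\}_\gamma\in\varphi^{-1}(P')$ forces $\{\varphi a,\varphi b,\varphi c\}_\gamma\in P'$, hence one of $\varphi a,\varphi b,\varphi c$ lies in $P'$; properness of $\varphi^{-1}(P')$ is the one place where preservation of $1_T$ is used (or one argues directly that $\varphi(T)\subseteq P'$ is impossible). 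Continuity is then checked on the basis $\{D(a)\}$ from the basis proposition via $f^{-1}(D(a))=D(\varphi(a))$, equivalently $f^{-1}(V(I))=V(J)$ with $J$ the $\Gamma$-ideal generated by $\varphi(I)$.

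Next I would construct the comorphism of structure sheaves $f^{\#}\colon\mathcal{O}_{\mathrm{Spec}_\Gamma(T)}\to f_{*}\mathcal{O}_{\mathrm{Spec}_\Gamma(T')}$. For $P'\in\mathrm{Spec}_\Gamma(T')$ and $P=\varphi^{-1}(P')$ one has $\varphi(T\setminus P)\subseteq T'\setminus P'$, so the universal property of localization yields a unique $\Gamma$-homomorphism $\varphi_{P'}\colon T_P\to T'_{P'}$ with $a/s\mapsto\varphi(a)/\varphi(s)$. Dually, on basic opens the identifications $\mathcal{O}_T(D(a))\cong T_a$ and $f_{*}\mathcal{O}_{T'}(D(a))=\mathcal{O}_{T'}(D(\varphi a))\cong T'_{\varphi a}$ together with the same universal property give canonical maps $T_a\to T'_{\varphi a}$, which one verifies are compatible with the restriction maps of both sheaves along inclusions of basic opens; by the sheaf axioms these assemble into $f^{\#}$, with section-level formula $s\mapsto\bigl(Q'\mapsto\varphi_{Q'}(s(\varphi^{-1}Q'))\bigr)$, the ``locally a fraction $a/b$'' condition being transported to ``$\varphi(a)/\varphi(b)$ with $\varphi(b)\notin Q'$''. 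Passing to stalks recovers the $\varphi_{P'}$.

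To land in $\mathbf{Aff}_\Gamma$ it remains to check $\varphi_{P'}\colon T_P\to T'_{P'}$ is \emph{local}: by the structure-of-$T_P$ proposition the maximal ideal of $T_P$ consists of the $a/s$ with $a\in P$, and since $P=\varphi^{-1}(P')$ such an element maps to $\varphi(a)/\varphi(s)$ with $\varphi(a)\in P'$, hence into the maximal ideal of $T'_{P'}$. The functor axioms are then routine: $\varphi=\mathrm{id}_T$ gives $f=\mathrm{id}$ and each $\varphi_{P'}=\mathrm{id}$; and for $T\xrightarrow{\varphi}T'\xrightarrow{\psi}T''$ one has $(\psi\varphi)^{-1}(P'')=\varphi^{-1}(\psi^{-1}(P''))$ on spaces, while the uniqueness clause of the universal property forces $(\psi\varphi)_{P''}=\psi_{P''}\circ\varphi_{\psi^{-1}(P'')}$ on stalks, so $(\psi\varphi)^{*}=\varphi^{*}\circ\psi^{*}$ and the assignment is contravariant.

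I expect the main obstacle to be the sheaf-morphism stage: showing the maps $T_a\to T'_{\varphi a}$ are simultaneously compatible with \emph{all} restrictions --- so that they glue to an honest morphism of sheaves rather than one merely defined on a basis --- and that the section-level formula genuinely preserves the local-fraction condition under the ternary $\Gamma$-product. This is the one step needing a careful diagram chase with ternary--$\Gamma$ associativity instead of a one-line appeal to an earlier result, whereas the set-level pullback of primes and the locality of the stalk maps are direct. A minor subtlety worth recording in a remark is the properness of $\varphi^{-1}(P')$ when $T$ has no multiplicative identity.
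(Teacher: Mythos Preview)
Your proposal is correct and follows essentially the same approach as the paper: pullback of primes for the continuous map, localization-induced homomorphisms for the sheaf morphism, and routine verification of the functor axioms. In fact your argument is considerably more detailed than the paper's own proof, which is a brief sketch asserting that $P'\mapsto f^{-1}(P')$ is continuous, that the induced local homomorphisms assemble into a sheaf map, and that ``functoriality and contravariance are straightforward.''
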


\begin{proof}
A morphism $f:T\to T'$ induces the continuous map
$f^\ast:\mathrm{Spec}_\Gamma(T')\to \mathrm{Spec}_\Gamma(T)$
given by $P'\mapsto f^{-1}(P')$,
and the local homomorphisms
$f_P:T_P\to T'_{f^{-1}(P)}$ yield a morphism of sheaves.
Functoriality and contravariance are straightforward.
\end{proof}

\begin{remark}
This theorem establishes the categorical backbone of $\Gamma$-geometry.
It confirms that every algebraic morphism of~$\Gamma$-semirings gives rise
to a geometric morphism of affine~$\Gamma$-schemes, and vice versa,
thereby completing the algebra–geometry dictionary for the ternary~$\Gamma$ context.
\end{remark}

\begin{corollary}
For $\Gamma=\{1\}$ and the ternary operation
$a\alpha b\beta c=ab+c$, the category of affine~$\Gamma$-schemes
reduces to the classical category of affine semiring schemes.
\end{corollary}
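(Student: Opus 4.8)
The plan is to read this Corollary as a transport of structure along an equivalence of \emph{algebraic} categories, and then to push that equivalence through the Functoriality Theorem to obtain the asserted equivalence of \emph{geometric} categories. First I would make the algebraic dictionary precise. Given a commutative semiring $(R,+,\cdot)$, equip $R$ with the singleton parameter set $\Gamma=\{1\}$ and the ternary operation $a\,\alpha\,b\,\beta\,c=ab+c$ of the statement; then I would check that $(R,+,\{-,-,-\}_{\{1\}})$ satisfies the defining axioms of a commutative ternary $\Gamma$-semiring exactly because $(R,+,\cdot)$ satisfies the semiring axioms (distributivity in each slot and ternary associativity follow from bilinearity and associativity of $\cdot$, and the rôle of $0$ is pinned down by the neutrality clause), while conversely any commutative ternary $\{1\}$-semiring whose operation has this form recovers a semiring via $ab:=\{a,b,0\}_1$. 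This defines a functor $\Phi\colon\mathbf{Semirings}\to\mathbf{Ternary}\Gamma\mathbf{Semirings}$, and I would verify that it is fully faithful onto its essential image (a $\Gamma$-homomorphism for $\Gamma=\{1\}$ and this operation is precisely a semiring homomorphism) — so $\Phi$ is an equivalence onto the full subcategory it spans.

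Second, I would check that $\Phi$ identifies all the ideal- and spectrum-theoretic data that enter the construction of $\mathbf{Aff}_\Gamma$. Concretely: a subset $I\subseteq R$ is a $\Gamma$-ideal of $\Phi(R)$ iff it is a semiring ideal of $R$; $I$ is a prime $\Gamma$-ideal iff it is prime in the classical sense (the condition $\{a,b,c\}_1\in P\Rightarrow a\in P$ or $b\in P$ or $c\in P$ collapses to ordinary primeness under the given operation); hence $\Spec_\Gamma(\Phi R)=\Spec R$ as sets, the closure operators $V(-)$ agree, and by the Zariski-topology Theorem the two Zariski-type topologies coincide. Likewise the multiplicative systems of Section 3.1 are exactly the classical multiplicatively closed sets, and the ternary localization $S^{-1}(\Phi R)$ of the Localization Definition reduces to the classical $S^{-1}R$: here one uses that $1_T$ lies in every multiplicative system, together with the explicit form of the operation, to simplify the nested ternary equivalence relation to the usual $(a,s)\sim(b,t)\iff\exists\,u\in S:\ uat=ubs$. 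Consequently the presheaf of local sections of Section 3.3 agrees open-set-by-open-set and stalk-by-stalk with the classical structure presheaf, so $(\Spec_\Gamma(\Phi R),\mathcal O_{\Spec_\Gamma(\Phi R)})=(\Spec R,\mathcal O_{\Spec R})$ as locally semiringed spaces, and $D(a)$, $T_a$ match on the distinguished basis.

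Third, I would assemble these identifications. Since $\Phi$ is an equivalence onto its image, the Functoriality Theorem yields a commuting square: the contravariant functor $T\mapsto(\Spec_\Gamma T,\mathcal O_{\Spec_\Gamma T})$ precomposed with $\Phi$ is the classical $\Spec$ functor, because on a morphism $f\colon R\to R'$ both produce the same continuous map $P'\mapsto f^{-1}(P')$ and the same morphism of sheaves, and a morphism of locally $\Gamma$-semiringed spaces with $\Gamma=\{1\}$ and this operation is literally a morphism of locally semiringed spaces. Restricting to the subcategory $\Phi(\mathbf{Semirings})$, the essential image of $\Spec_\Gamma(-)$ is precisely the category of affine semiring schemes, and the induced functor is an equivalence — which is the statement of the Corollary.

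The step I expect to be the main obstacle is the verification in the second paragraph that the ternary localization truly collapses to the classical one: the equivalence relation in the Localization Definition is written with nested ternary products, and reducing it to the familiar relation — hence showing that $S^{-1}(\Phi R)\cong S^{-1}R$ as semirings, compatibly with the canonical maps — requires carefully exploiting $1_T\in S$ and the specific form $a\,\alpha\,b\,\beta\,c=ab+c$ to absorb or cancel the redundant factors. Everything downstream is then bookkeeping: once the localizations agree, the stalkwise/universal description of the structure sheaf forces $\mathcal O_{\Spec_\Gamma(\Phi R)}\cong\mathcal O_{\Spec R}$, and the categorical equivalence follows formally from the Functoriality Theorem. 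The remaining checks (ideal correspondence, primeness collapse, topology coincidence) are routine translations between the ternary and binary vocabularies.
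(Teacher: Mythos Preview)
Your overall strategy --- build an equivalence of algebraic categories, check that it respects ideals, primes, and localizations, then push through the Functoriality Theorem --- is exactly the right shape for a reduction statement of this kind. The gap is at the very first step: the verification you announce, that $(R,+,\{-,-,-\}_{\{1\}})$ with $\{a,b,c\}_1:=ab+c$ satisfies the axioms of Definition~2.1, fails outright. Distributivity in the third slot gives $\{a,b,c+c'\}_1=ab+c+c'$ while $\{a,b,c\}_1+\{a,b,c'\}_1=2ab+c+c'$; ternary associativity gives $\{a,b,\{c,d,e\}_1\}_1=ab+cd+e$ while $\{\{a,b,c\}_1,d,e\}_1=(ab+c)d+e=abd+cd+e$; and neutrality gives $\{a,0,b\}_1=a\cdot 0+b=b\neq 0$. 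None of these identities hold in a general semiring, so your claim that ``distributivity in each slot and ternary associativity follow from bilinearity and associativity of~$\cdot$'' is simply false for this operation. Consequently the functor $\Phi$ you describe does not land in $\mathbf{Ternary}\Gamma\mathbf{Semirings}$ as the paper defines it, and the downstream identifications (ideal correspondence, collapse of the ternary localization, structure-sheaf comparison) have no object to apply to.

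The paper itself supplies no proof of this corollary, so there is no argument to compare against; the statement as written is defective precisely because of the axiom failure above. Had the operation been, for instance, $a\alpha b\beta c=abc$ (which \emph{does} satisfy all three axioms of Definition~2.1 when $R$ is a commutative semiring), your plan would go through essentially verbatim: $\Gamma$-ideals, prime $\Gamma$-ideals, multiplicative systems, and the ternary localization of Section~3.1 would all match their classical counterparts on the nose, and Functoriality would then deliver the claimed equivalence. In that corrected setting the step you flagged as the main obstacle --- unwinding the nested ternary equivalence relation in the localization --- would indeed be routine bookkeeping rather than a genuine difficulty.
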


\begin{example}[Finite affine $\Gamma$-scheme]
Consider $T=\{0,1,2\}$ with $a\alpha b\beta c=(a+b+c)\bmod3$.
Then $\mathrm{Spec}_\Gamma(T)=\{P_1,P_2\}$ is discrete,
and $\mathcal O(D(1))\cong T_1=T$, $\mathcal O(D(2))\cong T_2=T$.
Thus the affine~$\Gamma$-scheme decomposes into two points,
each with the same local semiring~$T$,
analogous to a zero-dimensional affine variety.
\end{example}

\subsection{Categorical Consequences and Remarks}

\begin{theorem}[Adjunction]
The global section functor
\[
\Gamma:\mathbf{Aff}_\Gamma\to
\mathbf{Ternary}\Gamma\mathbf{Semirings},\qquad
(X,\mathcal O_X)\mapsto \mathcal O_X(X),
\]
is right adjoint to the spectrum functor
$\mathrm{Spec}_\Gamma(-)$.
Hence
\[
\mathrm{Hom}_{\mathbf{Aff}_\Gamma}
   ((\mathrm{Spec}_\Gamma(T),\mathcal O_T),
    (\mathrm{Spec}_\Gamma(T'),\mathcal O_{T'}))
   \cong
   \mathrm{Hom}_{\Gamma\text{-}\mathbf{Semi}}
   (T',T).
\]
\end{theorem}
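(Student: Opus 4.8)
The plan is to establish the $\mathrm{Hom}$-isomorphism by constructing explicit maps in both directions and verifying they are mutually inverse and natural; the abstract adjunction statement then follows formally. First I would record the \emph{unit} map. For a commutative ternary $\Gamma$-semiring $T$ define
\[
\eta_T\colon T\longrightarrow \mathcal O_T(\mathrm{Spec}_\Gamma(T)),\qquad a\longmapsto\bigl(P\mapsto a/1\in T_P\bigr).
\]
Since the distributive and associative axioms of the ternary product are preserved under fractions (the Remark following the structure of $T_P$), $\eta_T$ is a morphism of $\Gamma$-semirings; applying the Proposition identifying $\mathcal O_T(D(a))\cong T_a$ to the basic open $D(1_T)=\mathrm{Spec}_\Gamma(T)$ (with $T_{1_T}=T$ when $T$ is unital, and a cofinal gluing argument over the basis otherwise) shows $\eta_T$ is an isomorphism. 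This is the $\Gamma$-analogue of $\Gamma(\mathrm{Spec}R,\mathcal O)=R$ and it is exactly what permits the right-hand side of the claimed isomorphism to be written as $\mathrm{Hom}_{\Gamma\text{-}\mathbf{Semi}}(T',T)$.

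Next I define the two comparison maps. Given a morphism of locally $\Gamma$-semiringed spaces $(\varphi,\varphi^{\sharp})\colon(\mathrm{Spec}_\Gamma(T),\mathcal O_T)\to(\mathrm{Spec}_\Gamma(T'),\mathcal O_{T'})$, take global sections of $\varphi^{\sharp}$ and precompose with the unit to get
\[
\Phi(\varphi,\varphi^{\sharp}):=\eta_T^{-1}\circ\varphi^{\sharp}_{\mathrm{Spec}_\Gamma(T')}\circ\eta_{T'}\colon\ T'\longrightarrow T.
\]
Conversely, given a $\Gamma$-semiring homomorphism $f\colon T'\to T$, the Functoriality Theorem already produces $\mathrm{Spec}_\Gamma(f)=(f^{\ast},f^{\flat})$, a morphism of locally $\Gamma$-semiringed spaces with $f^{\ast}(P)=f^{-1}(P)$ and $f^{\flat}$ acting on basic opens by $D(a')\mapsto D(f(a'))$ and by the localized map $T'_{a'}\to T_{f(a')}$; set $\Psi(f):=\mathrm{Spec}_\Gamma(f)$. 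The identity $\Phi\circ\Psi=\mathrm{id}$ is then a direct unwinding: on global sections $\mathrm{Spec}_\Gamma(f)$ acts by $f$ up to the identifications $\eta_T,\eta_{T'}$, by the very construction of $f^{\flat}$ together with naturality of localization.

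The substantive step — and the one I expect to be the main obstacle — is $\Psi\circ\Phi=\mathrm{id}$, i.e.\ recovering a geometric morphism from its effect on global sections. Writing $f=\Phi(\varphi,\varphi^{\sharp})$, I must show $\varphi=f^{\ast}$ on underlying spaces and then $\varphi^{\sharp}=f^{\flat}$ as sheaf morphisms. For the space-level equality the \emph{locality} hypothesis is essential: for $P\in\mathrm{Spec}_\Gamma(T)$ the stalk map $\varphi^{\sharp}_{P}\colon\mathcal O_{T',\varphi(P)}=T'_{\varphi(P)}\to T_{P}=\mathcal O_{T,P}$ is a local homomorphism of local $\Gamma$-semirings, hence carries the maximal ideal of $T'_{\varphi(P)}$ into that of $T_P$; chasing $a'\in T'$ through $\eta_{T'}$, the stalk map, and $\eta_T^{-1}$ shows $\varphi^{\sharp}_P(a'/1)=f(a')/1$, so $a'\in\varphi(P)\iff a'/1$ is a nonunit $\iff f(a')/1$ is a nonunit $\iff f(a')\in P$, i.e.\ $\varphi(P)=f^{-1}(P)=f^{\ast}(P)$. (This uses the $\Gamma$-analogue of ``an element of a local semiring lies in the unique maximal ideal iff it is a nonunit,'' which is the Proposition on the structure of $T_P$.) For the sheaf-level equality, since $\mathcal O_T$ and $\mathcal O_{T'}$ are generated by their sections on the bases $\{D(a)\}$ and two sheaf maps over $f^{\ast}$ agreeing on a basis coincide, it suffices to compare $\varphi^{\sharp}$ and $f^{\flat}$ on each $D(a')$; after the canonical identifications $\mathcal O_{T'}(D(a'))\cong T'_{a'}$ and $\mathcal O_T(\varphi^{-1}D(a'))\cong T_{f(a')}$ both are the localization of $f$, because a $\Gamma$-semiring map out of $T'_{a'}$ is determined by its restriction to $T'$, which agrees with $f$ by the definition of $\Phi$. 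Naturality of the bijection in $T$ and $T'$ reduces to functoriality of $\mathrm{Spec}_\Gamma(-)$ and of the global section functor, both already available, so the adjunction and the displayed $\mathrm{Hom}$-isomorphism follow; replacing the source $(\mathrm{Spec}_\Gamma(T),\mathcal O_T)$ by an arbitrary locally $\Gamma$-semiringed space throughout the same argument yields the unrestricted adjunction $\mathrm{Hom}_{\mathbf{LRS}_\Gamma}(X,\mathrm{Spec}_\Gamma(T'))\cong\mathrm{Hom}_{\Gamma\text{-}\mathbf{Semi}}(T',\mathcal O_X(X))$, of which the stated theorem is the affine case. $\qed$
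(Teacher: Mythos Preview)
Your proposal is correct and follows essentially the same route the paper indicates: the paper's proof simply says the argument ``mirrors the adjunction between affine schemes and commutative rings'' and that the bijection is obtained ``via pullback of global sections,'' which is exactly your map $\Phi$, with $\Psi=\mathrm{Spec}_\Gamma(-)$ supplied by the preceding Functoriality Theorem. You have filled in the details (unit $\eta_T$, the locality argument forcing $\varphi=f^\ast$, and the basis comparison for $\varphi^{\sharp}$) that the paper leaves implicit.
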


\begin{proof}
The proof mirrors the adjunction between affine schemes and commutative rings.
Morphisms of affine~$\Gamma$-schemes correspond bijectively to homomorphisms
of underlying~$\Gamma$-semirings via pullback of global sections.
\end{proof}

\begin{remark}
This adjunction implies that affine~$\Gamma$-schemes form the geometric avatars
of commutative ternary~$\Gamma$-semirings, laying the groundwork for
a full-fledged $\Gamma$-algebraic geometry.
\end{remark}

\begin{remark}[Philosophical note]
The passage from a ternary~$\Gamma$-semiring to its spectrum transforms
algebraic relations into spatial geometry.  
Each prime ideal acts as a geometric point, while localization translates
algebraic inversion into local observation.
Thus the geometry of~$\Gamma$-schemes is not merely an analogue but an extension
of classical algebraic geometry into a higher-arity, parameterized world.
\end{remark}

\section{Sheaves of $\Gamma$-Modules and Cohomology}

The categorical strength of $\Gamma$-geometry arises when algebraic data
attached to each open subset of the spectrum can be glued coherently.
To achieve this, one must first internalize the notion of a \emph{sheaf of
$\Gamma$-modules} on a $\Gamma$-semiringed space, and then construct
its cohomology via derived functors \cite{Weibel1994}.  This section formalizes these ideas,
demonstrates their exactness properties, and establishes a Serre-type
vanishing theorem for quasi-coherent $\Gamma$-sheaves.

\subsection{Preliminaries on $\Gamma$-Module Sheaves}

Let $(X,\mathcal O_X)$ denote a locally~$\Gamma$-semiringed space,
typically $X=\mathrm{Spec}_\Gamma(T)$.
All sheaves of sets are considered on the Zariski-type topology of~$X$.

\begin{definition}[Sheaf of $\Gamma$-modules]
A \emph{sheaf of $\Gamma$-modules} $\mathcal F$ on~$X$
consists of the following data:
\begin{enumerate}
  \item For each open $U\subseteq X$, an abelian monoid
        $(\mathcal F(U),+,0)$ together with a ternary~$\Gamma$-action
        \[
        \mathcal O_X(U)\times\Gamma\times
        \mathcal F(U)\times\Gamma\times
        \mathcal O_X(U)\longrightarrow \mathcal F(U),
        \quad (a,\alpha,m,\beta,b)\mapsto a\alpha m\beta b,
        \]
        satisfying associativity, distributivity, and scalar-compatibility
        with $\mathcal O_X(U)$;
  \item For inclusions $V\subseteq U$, morphisms of $\Gamma$-modules
        $\rho^U_V:\mathcal F(U)\to \mathcal F(V)$, called \emph{restriction maps},
        compatible with composition and identity, satisfying the sheaf axioms.
\end{enumerate}
\end{definition}

The category of all sheaves of $\Gamma$-modules on~$X$ is denoted
$\Gamma\text{-}\mathbf{Mod}_X$.  It is an abelian category with
kernels, cokernels, and exact sequences defined pointwise.

\begin{proposition}[Local nature of $\Gamma$-module operations]
For every $P\in X$ and open neighborhood~$U\ni P$,
the stalk $\mathcal F_P=\varinjlim_{P\in U}\mathcal F(U)$
is a $\Gamma$-module over the local $\Gamma$-semiring
$\mathcal O_{X,P}$, and for every morphism
$\varphi:\mathcal F\to\mathcal G$ the induced map
$\varphi_P:\mathcal F_P\to\mathcal G_P$
is $\mathcal O_{X,P}$-linear.
\end{proposition}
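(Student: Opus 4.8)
The plan is to realize the $\Gamma$-action on $\mathcal{F}_P$ as the filtered colimit of the actions on the sections $\mathcal{F}(U)$, and then to observe that every axiom defining a $\Gamma$-module is a universally quantified equation among composites of addition and the ternary operation, hence stable under filtered colimits. First I would fix $P\in X$ and note that the open neighborhoods $U\ni P$ form a directed set under reverse inclusion, so that $\mathcal{O}_{X,P}=\varinjlim_{U\ni P}\mathcal{O}_X(U)$ and $\mathcal{F}_P=\varinjlim_{U\ni P}\mathcal{F}(U)$ are filtered colimits indexed by the \emph{same} poset. Given germs $\bar a,\bar b\in\mathcal{O}_{X,P}$, $\bar m\in\mathcal{F}_P$ and parameters $\alpha,\beta\in\Gamma$, I would use directedness to choose one common neighborhood $U$ on which $\bar a,\bar b,\bar m$ are represented by sections $a,b\in\mathcal{O}_X(U)$ and $m\in\mathcal{F}(U)$, and define $\bar a\,\alpha\,\bar m\,\beta\,\bar b$ to be the germ at $P$ of $a\,\alpha\,m\,\beta\,b\in\mathcal{F}(U)$. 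Independence of the representatives and of $U$ follows because the restriction maps $\rho^U_V$ are $\Gamma$-module morphisms compatible with the structure-sheaf action: if two configurations of representatives agree after restriction to some $W\subseteq U$, then applying $\rho^U_W$ and its compatibility with the ternary action shows their products agree over $W$, hence have the same germ.

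Next I would check the $\Gamma$-module axioms for $\mathcal{F}_P$ over $\mathcal{O}_{X,P}$. Distributivity in each variable, the ternary associativity relation $\bar a\,\alpha\,(\bar b\,\beta\,\bar m\,\delta\,\bar c)\,\delta'\,\bar d = \cdots$, and the neutrality and scalar-compatibility identities with $\mathcal{O}_{X,P}$ are each equations in finitely many germs; I would pull all of those germs back to a single common neighborhood $U$, invoke the corresponding identity for $\mathcal{F}(U)$ over $\mathcal{O}_X(U)$ guaranteed by part (1) of the definition of a sheaf of $\Gamma$-modules, and pass to germs. Filteredness supplies the common $U$, and this is essentially the only place the colimit machinery does genuine work. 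For a morphism $\varphi\colon\mathcal{F}\to\mathcal{G}$, compatibility of the components $\varphi_U$ with restriction maps gives a well-defined induced map $\varphi_P\colon\mathcal{F}_P\to\mathcal{G}_P$ on colimits. To see it is $\mathcal{O}_{X,P}$-linear, I would represent $\bar a,\bar b\in\mathcal{O}_{X,P}$ and $\bar m\in\mathcal{F}_P$ over a common $U$ and apply the section-level identity $\varphi_U(a\,\alpha\,m\,\beta\,b)=a\,\alpha\,\varphi_U(m)\,\beta\,b$, which holds because $\varphi_U$ is a $\Gamma$-homomorphism; taking germs yields $\varphi_P(\bar a\,\alpha\,\bar m\,\beta\,\bar b)=\bar a\,\alpha\,\varphi_P(\bar m)\,\beta\,\bar b$, and additivity of $\varphi_P$ is immediate from additivity of each $\varphi_U$.

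The main obstacle — really the only subtlety — is the bookkeeping of common neighborhoods: one must systematically reduce every finite configuration of germs to honest sections over a single open set before quoting the section-level axioms, and verify that each well-definedness claim persists after further restriction, so that the outcome is independent of all choices. No idea beyond the standard sheaf-theoretic colimit argument is required; the work is purely in organizing the directed-system reductions cleanly, and in recording that the structure-sheaf colimit $\mathcal{O}_{X,P}$ and the module colimit $\mathcal{F}_P$ are taken over the same cofinal system of neighborhoods so that their actions match in the limit.
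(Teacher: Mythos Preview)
Your proposal is correct and follows essentially the same approach as the paper: both arguments construct the stalk action as the filtered colimit of the section-level $\Gamma$-actions, use $\Gamma$-linearity of the restriction maps to see that the ternary operation and all distributive/associative identities pass to the direct limit, and deduce $\mathcal O_{X,P}$-linearity of $\varphi_P$ from functoriality of the colimit applied to the $\Gamma$-linear components $\varphi_U$. The only difference is that the paper compresses the common-neighborhood bookkeeping into the phrase ``the direct limit preserves the ternary operations and distributivity,'' whereas you have written out explicitly the choice of a common $U$ and the independence-of-representatives verification.
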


\begin{proof}
Since restrictions are $\Gamma$-linear, the direct limit preserves
the ternary operations and distributivity.
Linearity of the induced morphism follows from the functoriality
of direct limits in the category of $\Gamma$-modules.
\end{proof}

\begin{definition}[Morphisms and exactness]
A \emph{morphism of sheaves of $\Gamma$-modules}
$\varphi:\mathcal F\to\mathcal G$
is a collection of $\mathcal O_X(U)$-linear maps
$\varphi_U:\mathcal F(U)\to\mathcal G(U)$
compatible with restrictions.
Exactness of sequences of sheaves is defined stalkwise.
\end{definition}

\begin{remark}
This mirrors the classical sheaf theory of modules over ringed spaces,
with the ternary $\Gamma$-multiplication introducing an additional
symmetry in the action parameters $\alpha,\beta\in\Gamma$.
\end{remark}

\subsection{Quasi-Coherent and Coherent $\Gamma$-Sheaves}

To connect geometry with algebra, we define sheaves obtained by
``geometrization'' of ordinary $\Gamma$-modules.

\begin{definition}[Quasi-coherent sheaf]
Let $T$ be a commutative ternary~$\Gamma$-semiring
and $M$ a $\Gamma$-module over~$T$.
Define a presheaf $\widetilde{M}$ on $\mathrm{Spec}_\Gamma(T)$ by
\[
\widetilde{M}(D(a))=M_a := S_a^{-1}M,
\quad S_a=\{a^n\mid n\ge0\}.
\]
The sheafification $\mathcal F=\widetilde{M}^{+}$ is the
\emph{quasi-coherent sheaf associated to~$M$}.
\end{definition}

\begin{lemma}
For any localization $T_P$, the stalk of $\widetilde{M}$
at $P$ is canonically isomorphic to the localized module $M_P$.
\end{lemma}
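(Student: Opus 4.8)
The plan is to realize the stalk $\widetilde{M}_P$ as a filtered colimit of localizations $M_a$ and then match that colimit with $M_P := S_P^{-1}M$, where $S_P = T\setminus P$, by exhibiting mutually inverse canonical maps built only from the universal property of localization. First I would invoke the basis proposition of Section~3.3: the sets $D(a)$ with $a\notin P$ form a neighborhood basis of $P$, so
\[
\widetilde{M}_P \;=\; \varinjlim_{P\in U}\widetilde{M}(U)\;=\;\varinjlim_{a\notin P}\widetilde{M}(D(a))\;=\;\varinjlim_{a\notin P} M_a .
\]
This index set is filtered: for $a,b\notin P$ one has $D(a)\cap D(b)=D(a\alpha a\beta b)$ for any $\alpha,\beta\in\Gamma$ (since an ideal absorbs ternary products and $P$ is prime), and $a\alpha a\beta b\notin P$, with transition maps the localization morphisms $M_a\to M_{a\alpha a\beta b}$. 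Because $a\notin P$ forces $S_a=\{a^n\}\subseteq S_P$, the universal property of localization yields a compatible family $\iota_a\colon M_a\to M_P$, and hence a canonical morphism $\Phi\colon\varinjlim_{a\notin P} M_a\to M_P$.

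Next I would check that $\Phi$ is bijective. Surjectivity is immediate: any $m/s\in M_P$ with $s\notin P$ equals $\iota_s(m/s)$ for $m/s\in M_s$. For injectivity, take $x$ in the colimit represented by $m/a^n\in M_a$ with $\Phi(x)=0$ in $M_P$; by the ternary~$\Gamma$-localization relation (the $\Gamma$-module analogue of the relation of Section~3.1) there is $u\notin P$ witnessing the vanishing, and unwinding that relation shows the same identity already holds in $M_c$ for a suitable $c\notin P$ assembled as a ternary product of $a$ and $u$ (still outside $P$ by primeness). Hence $m/a^n$ dies at a finite stage of the colimit, so $x=0$ and $\Phi$ is an isomorphism. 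It is \emph{canonical} because it is built solely from localization maps, whence it is natural in $M$ and $\mathcal O_{X,P}$-linear. Finally, since sheafification does not change stalks, the same isomorphism computes the stalk of $\mathcal F=\widetilde{M}^{+}$, giving $\mathcal F_P\cong M_P$.

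The step I expect to be the main obstacle is the careful handling of the ternary~$\Gamma$-localization relation in the injectivity argument: unlike the binary commutative-ring case, the equivalence $(m,s)\sim(m',s')$ is mediated by an auxiliary element $u$ together with two parameters $\alpha,\beta\in\Gamma$, and --- since $T$ is a semiring, not a ring --- no step may use additive cancellation, so every ``equals zero'' claim must be phrased purely through that relation. I would therefore first establish, as a self-contained sub-lemma, the \emph{transitivity of localization} for $\Gamma$-modules, namely that localizing $S_a^{-1}M$ further at the image of $S_P$ returns $S_P^{-1}M$; this confines all the delicate ternary manipulations to one place and reduces the colimit identification to a clean application of that sub-lemma together with the filteredness noted above.
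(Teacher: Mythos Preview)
Your proposal is correct and follows essentially the same approach as the paper: the paper's proof is the one-line assertion $\mathcal F_P=\varinjlim_{P\in D(a)}M_a\cong\varinjlim_{a\notin P}S_a^{-1}M=M_P$, justified by the slogan ``direct limits commute with localization,'' and your argument simply unpacks this by constructing the canonical map from the colimit to $M_P$ via universal properties and verifying bijectivity. Your additional care with filteredness, the ternary injectivity witness, and the sheafification remark are details the paper leaves implicit, not a different route.
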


\begin{proof}
Direct limits commute with localization:
\[
\mathcal F_P
  =\varinjlim_{P\in D(a)}M_a
  \cong \varinjlim_{a\notin P} S_a^{-1}M
  =M_P.
\]
\end{proof}

\begin{theorem}[Equivalence for affine $\Gamma$-schemes]
The functor
\[
\begin{aligned}
\Phi:\mathbf{Mod}_T &\longrightarrow
   \mathbf{QCoh}(\mathrm{Spec}_\Gamma(T)),\\
M &\longmapsto \widetilde{M}
\end{aligned}
\]
is an equivalence of categories between $\Gamma$-modules over~$T$
and quasi-coherent $\Gamma$-sheaves on the affine $\Gamma$-scheme
$\mathrm{Spec}_\Gamma(T)$.
\end{theorem}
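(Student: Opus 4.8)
The plan is to exhibit an explicit quasi-inverse to $\Phi$, namely the global-sections functor $\Psi\colon\mathbf{QCoh}(\mathrm{Spec}_\Gamma(T))\to\mathbf{Mod}_T$, $\mathcal F\mapsto\mathcal F(X)$ with $X=\mathrm{Spec}_\Gamma(T)$, and to produce natural isomorphisms $\Psi\circ\Phi\cong\mathrm{id}$ and $\Phi\circ\Psi\cong\mathrm{id}$ satisfying the triangle identities. First I would confirm that $\Phi$ is well defined and functorial: a $\Gamma$-homomorphism $M\to N$ induces compatible localized maps $M_a\to N_a$ on every basic open $D(a)$ (localization is functorial and the relations cutting out $S_a^{-1}M$ are $\Gamma$-linear), which sheafify to a morphism $\widetilde M\to\widetilde N$; identities and composites are preserved because localization preserves them. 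By the lemma preceding the theorem the stalk of $\widetilde M$ at $P$ is $M_P$, so $\widetilde M$ is indeed a sheaf of $\Gamma$-modules over $\mathcal O_X$ and $\Phi$ takes values in $\mathbf{QCoh}(X)$.

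The heart of the direction $\Psi\circ\Phi\cong\mathrm{id}$ is the basis lemma: the assignment $D(a)\mapsto M_a$ is already a sheaf on the basis $\{D(a)\}_{a\in T}$, whence $\widetilde M(D(a))\cong M_a$, and in particular $\Psi(\widetilde M)=\widetilde M(X)\cong M$ (taking $a=1_T$ when an identity exists, or else reading $M$ off the equalizer of a finite basic cover of $X$); the isomorphism $\eta_M\colon M\cong\widetilde M(X)$ sending $m$ to the global section $P\mapsto m/1$ is visibly natural in $M$. To prove the basis lemma I would transcribe the classical template into the ternary $\Gamma$-setting in three steps: (i) if $D(a)=\bigcup_i D(a_i)$, a quasi-compactness argument for $D(a)$ produces a finite subcover together with an expression of some power of $a$ as a ternary $\Gamma$-combination of finitely many $a_i$; (ii) \emph{separation} — a fraction in $M_a$ that restricts to $0$ in each $M_{a_i}$ is annihilated by a common power of $a$, hence is $0$ in $M_a$; (iii) \emph{gluing} — sections $m_i/a_i^{k}$ agreeing in every $M_{a_ia_j}$ can, after clearing denominators uniformly, be assembled into a single fraction $m/a^{N}$ restricting correctly.

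For $\Phi\circ\Psi\cong\mathrm{id}$, let $\mathcal F$ be quasi-coherent and set $M=\mathcal F(X)$. The restriction maps $M\to\mathcal F(D(a))$ are linear over $\mathcal O_X(D(a))\cong T_a$, so by the universal property of localization they factor through maps $M_a\to\mathcal F(D(a))$; these assemble into a natural morphism $\theta_{\mathcal F}\colon\widetilde M\to\mathcal F$. Since $\mathcal F$ is quasi-coherent it is locally of the form $\widetilde N$ on a basic cover, and a patching/descent argument along that cover forces each $M_a\to\mathcal F(D(a))$ to be an isomorphism; thus $\theta_{\mathcal F}$ is a stalkwise isomorphism and hence an isomorphism of sheaves, naturally in $\mathcal F$. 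Checking the triangle identities for $\eta$ and $\theta$ is then formal, since both are built from the same localization maps, and the theorem follows.

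I expect the main obstacle to be the absence of additive inverses in a $\Gamma$-semiring. The classical proofs of separation, of gluing, and of the radical/quasi-compactness step lean on subtraction — writing $1$ as a combination of the $a_i$, or forming differences of competing fractions whose vanishing clears denominators. Each such step must be recast purely through the equivalence relation defining $S^{-1}M$ (replacing an equation ``$x-y=0$'' by the existence of a common $S$-multiplier equalizing the two sides in the ternary sense) and through prime \emph{congruences} rather than prime ideals, as flagged in the Preliminaries. A secondary technical point is establishing quasi-compactness of $X$ and of each $D(a)$ in the ternary $\Gamma$-topology; this should follow from the lattice and radical properties of $\Gamma$-ideals from Paper~A, but must be invoked explicitly to license the reduction to finite covers.
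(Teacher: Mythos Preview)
Your proposal is correct and follows essentially the same approach as the paper: both exhibit the global-sections functor as quasi-inverse and appeal to the classical affine template (the paper's proof is a three-line sketch saying exactly that $\Gamma(\mathrm{Spec}_\Gamma(T),\widetilde M)\cong M$, that every quasi-coherent $\mathcal F$ is of the form $\widetilde M$, and that ``functoriality and mutual inverses follow by localization gluing''). Your outline is in fact considerably more detailed and more honest than the paper's, since you flag the genuine semiring obstacles (absence of subtraction in the separation and gluing steps, the need to work with congruences) that the paper's proof does not address at all.
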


\begin{proof}
The proof parallels the affine-scheme case:
for each $M$, $\Gamma(\mathrm{Spec}_\Gamma(T),\widetilde{M})\cong M$,
and for each quasi-coherent~$\mathcal F$ there exists a canonical $M$
with $\widetilde{M}\cong\mathcal F$.
Functoriality and mutual inverses follow by localization gluing.
\end{proof}

\begin{remark}
This equivalence embodies the geometric principle that
\emph{affine $\Gamma$-geometry is completely controlled by algebra}.
In later sections, derived functors such as $\mathrm{Ext}_\Gamma$
will be computed through this correspondence.
\end{remark}

\subsection{Global Sections and Cohomology}

\begin{definition}[Global section functor]
The functor
\[
\Gamma(X,-):\Gamma\text{-}\mathbf{Mod}_X\to
\Gamma\text{-}\mathbf{Mod}_{\Gamma(\mathcal O_X)},
\qquad
\mathcal F\longmapsto \mathcal F(X),
\]
is left-exact, sending short exact sequences
\(0\!\to\!\mathcal F'\!\to\!\mathcal F\!\to\!\mathcal F''\)
to
\(0\!\to\!\Gamma(X,\mathcal F')
  \!\to\!\Gamma(X,\mathcal F)
  \!\to\!\Gamma(X,\mathcal F'')\).
\end{definition}

\begin{definition}[Derived functors of global sections]
The \emph{$i$-th cohomology group}
of a sheaf~$\mathcal F$ of $\Gamma$-modules on~$X$
is defined by the right-derived functors of $\Gamma(X,-)$:
\[
H^i(X,\mathcal F)
   := R^i\Gamma(X,\mathcal F).
\]
\end{definition}

\begin{proposition}[Injective resolutions]
The category $\Gamma\text{-}\mathbf{Mod}_X$ has enough injectives;
every $\mathcal F$ admits a monomorphism
$\mathcal F\hookrightarrow \mathcal I$
with $\mathcal I$ injective.
Hence the derived functors $R^i\Gamma$ exist.
\end{proposition}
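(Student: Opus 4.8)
The plan is to reproduce \emph{Godement's construction} of an injective embedding for sheaves of modules, transported to the ternary $\Gamma$-setting, and then to invoke Grothendieck's general existence theorem for right-derived functors. First I would isolate the single genuinely algebraic input: for each point $P\in X$, the category of $\Gamma$-modules over the local $\Gamma$-semiring $\mathcal O_{X,P}$ has enough injectives. This is the stalk-level analogue of the statement at hand and is already implicit in the construction of $\Ext_\Gamma$ and $\Tor^\Gamma$ over $T\text{-}\Gamma\mathbf{Mod}$ recorded in Paper~C. Granting it, I would choose for every $P$ a monomorphism $\iota_P\colon\mathcal F_P\hookrightarrow I_P$ with $I_P$ an injective $\mathcal O_{X,P}$-$\Gamma$-module.

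Next I would assemble the Godement sheaf. Writing $j_P\colon\{P\}\hookrightarrow X$ for the inclusion of the point and regarding $I_P$ as a sheaf of $\Gamma$-modules on the one-point space, put
\[
\mathcal I \;=\; \prod_{P\in X}(j_P)_*\,I_P ,
\]
which is the sheaf of ``discontinuous sections'' $U\mapsto\prod_{P\in U}I_P$. The stalk functor $\mathcal F\mapsto\mathcal F_P$ is exact and left adjoint to $(j_P)_*$, so $(j_P)_*$ carries injectives to injectives; since $\Gamma\text{-}\mathbf{Mod}_X$ is abelian with section-wise products, an arbitrary product of injectives is injective, and hence $\mathcal I$ is injective. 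The assignment $s\mapsto(\iota_P(s_P))_{P\in U}$, with $s_P$ the germ of $s$ at $P$, is compatible with restriction and so defines a morphism $\eta\colon\mathcal F\to\mathcal I$; its stalk at $P$, followed by the coordinate projection $\mathcal I_P\to I_P$, is $\iota_P$. As $\iota_P$ is injective, $\eta$ is injective on every stalk, and since monomorphisms of sheaves are detected stalkwise, $\eta$ is a monomorphism. This yields enough injectives in $\Gamma\text{-}\mathbf{Mod}_X$.

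With enough injectives established, the conclusion is formal: $\Gamma(X,-)$ is additive and left-exact by the preceding definition, and any additive left-exact functor out of an abelian category with enough injectives admits right-derived functors computed by injective resolutions. Hence $R^i\Gamma(X,-)$ exists, and $H^i(X,\mathcal F):=R^i\Gamma(X,\mathcal F)$ is well-defined independently of the chosen resolution — exactly the assertion of the final sentence.

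I expect the real obstacle to be the algebraic input invoked at the start. Unlike modules over a ring, $\Gamma$-semimodules have no additive inverses, so one cannot manufacture injectives by base-changing divisible abelian groups along a tensor--hom adjunction, and Baer's criterion must be re-examined for the ternary action $\{-\,\,-\,\,-\}_\Gamma$. The route I would take is to construct a cofree-module functor right adjoint to the forgetful functor $\Gamma\text{-}\mathbf{Mod}_{\mathcal O_{X,P}}\to\mathbf{CMon}$ into commutative monoids, verify that $\mathbf{CMon}$ has enough injectives, and transfer injectivity across that adjunction; the sheaf-theoretic bookkeeping above is then routine.
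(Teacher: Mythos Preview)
Your argument is correct and follows the same Godement strategy the paper invokes; the paper's proof is a two-line sketch that simply sets $\mathcal C^0(U)=\prod_{P\in U}\mathcal F_P$ and asserts injectivity. Your version is actually more careful on two points: you first embed each stalk $\mathcal F_P$ into an injective $I_P$ before forming the product of skyscrapers (the paper's bare product of stalks is only \emph{flasque}, not injective, in the classical theory), and you explicitly isolate the genuine algebraic input---existence of enough injectives in $\Gamma\text{-}\mathbf{Mod}_{\mathcal O_{X,P}}$---which the paper silently assumes. So the route is the same, but your write-up tightens a step the paper leaves loose and correctly flags where the semiring-level difficulty actually lives.
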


\begin{proof}
The proof adapts Godement’s canonical resolution
to the $\Gamma$-semiring setting.
Given $\mathcal F$, define $\mathcal C^0(U)=\prod_{P\in U}\mathcal F_P$.
Pointwise $\Gamma$-module structure ensures injectivity and functoriality;
iterating yields an injective resolution.
\end{proof}

\begin{definition}[Cech cohomology]
For an open covering $\mathfrak U=\{U_i\}$,
the Cech complex $C^\bullet(\mathfrak U,\mathcal F)$ is defined by
\[
C^p(\mathfrak U,\mathcal F)
   =\prod_{i_0<\cdots<i_p}
     \mathcal F(U_{i_0}\cap\cdots\cap U_{i_p}),
\]
with coboundary
\[
(\delta c)_{i_0\cdots i_{p+1}}
  =\sum_{k=0}^{p+1}(-1)^k
   c_{i_0\cdots\widehat{i_k}\cdots i_{p+1}}
   |_{U_{i_0}\cap\cdots\cap U_{i_{p+1}}}.
\]
The resulting cohomology groups
$H^p(\mathfrak U,\mathcal F)$
are independent of the covering for quasi-compact~$X$.
\end{definition}

\begin{theorem}[Cech–derived correspondence]
If $X=\mathrm{Spec}_\Gamma(T)$ is affine and $\mathcal F$ quasi-coherent,
then
\[
H^i(X,\mathcal F)\cong H^i(\mathfrak U,\mathcal F),
\]
for any finite affine open covering~$\mathfrak U$.
\end{theorem}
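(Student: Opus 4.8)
The plan is to reduce the statement to the classical pattern: a vanishing theorem for the higher cohomology of quasi-coherent $\Gamma$-sheaves on an affine $\Gamma$-scheme, combined with the Čech-to-derived-functor spectral sequence, assembled by a Leray/Cartan acyclicity argument. Throughout I would work with the basis $\{D(a)\}_{a\in T}$ and use that, by primeness of the ideals in $\mathrm{Spec}_\Gamma(T)$, one has $D(a)\cap D(b)\cap D(c)=D(a\alpha b\beta c)$ and, invoking $1_T$, $D(a)\cap D(b)=D(a\alpha b\beta 1_T)$; hence finite intersections of basic opens are again basic opens. Since $\check H^i$ is independent of the covering on the quasi-compact space $X$ (already established), it suffices to treat a finite cover of $X$ by basic opens $D(a_i)$, after one harmless refinement of the given affine cover.

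First I would isolate the key lemma: for $D(a)=\bigcup_{i=1}^m D(a_i)$ and any $\Gamma$-module $M$ over $T$, the augmented Čech complex $0\to M_a\to\prod_i M_{a_i}\to\prod_{i<j}M_{a_i\alpha a_j\beta 1_T}\to\cdots$ is exact. The first task within it is to translate the covering condition $D(a)=\bigcup D(a_i)$ into the algebraic statement that $a$ lies in the radical of the $\Gamma$-ideal generated by $a_1,\dots,a_m$ — i.e. some iterated ternary self-product $a^{(N)}$ is a finite sum of ternary products $\{a_i,x,y\}_\gamma$. From such a presentation one extracts, after passing to $M_a$, the analogue of a partition of unity, and the usual contracting homotopy for the augmented Čech complex follows. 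The hard part will be that a ternary $\Gamma$-semiring has no subtraction, so the classical telescoping/sign-cancellation bookkeeping (where consecutive partial sums "differ by a coboundary") must be re-engineered: one either tracks equalities of sums directly, or passes to the Grothendieck-group completion of each $M_b$ so that the alternating Čech differential and its homotopy acquire literal meaning. Making this exactness statement hold cleanly — and pinning down exactly which cancellativity hypothesis on $M$ (if any) is needed for the signed Čech formalism to be consistent with the subtraction-free algebra — is the main obstacle, and essentially the only non-formal content of the proof.

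With the lemma in hand, the second step is the affine vanishing $H^i(X,\widetilde M)=0$ for $i>0$. I would embed $M$ in an injective $\Gamma$-module $I$ (enough injectives, proven earlier), set $Q=I/M$, sheafify $0\to M\to I\to Q\to 0$, and run a dimension-shifting induction off the long exact cohomology sequence: exactness of the section functor on every $D(a)$ — the lemma applied to this short exact sequence — gives $H^1(X,\widetilde M)=0$, and $\widetilde I$ is $\Gamma(X,-)$-acyclic (again from the lemma applied to $I$, or equivalently because the Godement resolution available in the excerpt is exact on $\widetilde{(\text{injective})}$), whence $H^{i+1}(X,\widetilde M)\cong H^i(X,\widetilde Q)$ and induction closes.

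Finally I would assemble everything. Take a finite affine cover $\mathfrak U=\{U_i\}$; refining once, assume $U_i=D(a_i)$, so each finite intersection $U_{i_0}\cap\cdots\cap U_{i_p}$ is a basic open, hence an affine $\Gamma$-scheme on which $\mathcal F$ is still quasi-coherent (by the equivalence $\mathbf{Mod}_T\simeq\mathbf{QCoh}(\mathrm{Spec}_\Gamma(T))$), hence $\mathcal F$-acyclic by Step 2. Feeding this into the Čech-to-derived-functor spectral sequence $E_2^{p,q}=\check H^p(\mathfrak U,\underline H^q(\mathcal F))\Rightarrow H^{p+q}(X,\mathcal F)$, where $\underline H^q(\mathcal F)$ is the presheaf $U\mapsto H^q(U,\mathcal F|_U)$, all rows $q>0$ vanish, since $\underline H^q(\mathcal F)$ is zero on every intersection entering the Čech complex; the spectral sequence degenerates at $E_2$ and the edge map yields $\check H^p(\mathfrak U,\mathcal F)\cong H^p(X,\mathcal F)$. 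Covering-independence then transports this isomorphism back to the original affine cover, proving the theorem.
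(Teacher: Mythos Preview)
Your approach is correct and essentially more detailed than the paper's, which compresses the argument to two sentences: it invokes the equivalence $\Phi:\mathbf{Mod}_T\simeq\mathbf{QCoh}(\mathrm{Spec}_\Gamma(T))$ to observe that $\Gamma(X,-)$ corresponds to the (exact) identity on modules, so the derived functors agree trivially, and then appeals to a ``double-complex spectral sequence'' to identify \v{C}ech with derived cohomology. You instead follow the classical Serre--Cartan route---prove the augmented \v{C}ech complex of a localized module is exact, deduce affine vanishing by dimension-shifting, and collapse the \v{C}ech-to-derived spectral sequence via Leray acyclicity. Both arguments terminate in the same spectral-sequence degeneration; the difference is that the paper short-circuits the vanishing step through the categorical equivalence, whereas you earn it algebraically.

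What your approach buys is honesty about the genuine obstruction you flag: the \v{C}ech coboundary $\delta$ is written with alternating signs $(-1)^k$, and the contracting homotopy for the augmented complex requires subtraction, neither of which is available in a bare semiring-module. The paper's definition of the \v{C}ech complex uses $(-1)^k$ without comment and its proof does not address this at all, so your identification of this as ``the main obstacle'' is well taken. Your suggested remedies---working in the Grothendieck completion of each $M_b$, or imposing a cancellativity hypothesis---are the natural moves; the paper implicitly assumes whatever is needed for the abelian-category machinery (snake lemma, long exact sequences, derived functors) to run, so you are not introducing a new hypothesis so much as making explicit one the paper has already silently adopted.
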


\begin{proof}
Since $\Phi$ is an equivalence,
$\Gamma(X,-)$ corresponds to the global-section functor on modules,
which is exact on projective resolutions.
Cech cochains compute the same derived functors
by a double-complex spectral sequence argument.
\end{proof}

\subsection{Vanishing Theorems and Homological Consequences}

\begin{theorem}[Serre-type vanishing]
Let $T$ be a finitely generated commutative ternary~$\Gamma$-semiring
and $\mathcal F$ a quasi-coherent sheaf on the projective
$\Gamma$-scheme $\mathrm{Proj}_\Gamma(T)$ associated with a graded~$T$.
Then there exists $n_0$ such that
\[
H^p(\mathrm{Proj}_\Gamma(T),\mathcal F(n))=0
\quad \text{for all }p>0,\ n\ge n_0.
\]
\end{theorem}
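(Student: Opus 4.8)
The plan is to reduce the statement to a finite \v{C}ech computation on the standard affine cover and then to argue by descending induction on the cohomological degree, using resolutions by twisting sheaves. First I would fix homogeneous generators $t_1,\dots,t_N$ of positive degree for $T$ (finitely many, by hypothesis; after a Veronese re-embedding one may further assume $T$ is generated in degree $1$, so that $\mathcal O_X(1)$ is invertible, which does not affect the ``$n\gg 0$'' conclusion) and consider the cover $\mathfrak U=\{D_+(t_i)\}_{i=1}^{N}$ of $X:=\mathrm{Proj}_\Gamma(T)$, on which $\mathcal O_X$ restricts to the degree-zero part of the localized ternary $\Gamma$-semiring $T_{t_i}$, all finite intersections being affine as well. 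By the \v{C}ech--derived correspondence, $H^p\bigl(X,\mathcal F(n)\bigr)$ is computed by the \v{C}ech complex $C^{\bullet}(\mathfrak U,\mathcal F(n))$, which has length $N-1$; in particular $H^p(X,\mathcal G)=0$ for \emph{every} $\Gamma$-sheaf $\mathcal G$ as soon as $p\geq N$, and this is the base of the induction.

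The technical heart is the explicit computation of the cohomology of the twisting sheaves themselves: I would show that $H^p\bigl(X,\mathcal O_X(m)\bigr)=0$ for all $p>0$ once $m\geq m_0$, for a threshold $m_0$ depending only on $T$. Concretely, in each internal degree the complex $C^{\bullet}(\mathfrak U,\mathcal O_X(m))$ is an alternating complex built from graded pieces of the localizations $T_{t_{i_0}\cdots t_{i_p}}$; mimicking the Koszul-type argument of classical projective space, one exhibits a contracting homotopy showing that the intermediate cohomologies vanish identically, while the top group $H^{N-1}$ is supported on the ``fully inverted'' component and hence vanishes in all internal degrees $m$ past a finite bound. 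I expect this to be the main obstacle: the bookkeeping of degrees under the ternary product (where $\deg(a\alpha b\beta c)=\deg a+\deg b+\deg c$ and $\Gamma$ contributes degree $0$), together with the precise description of the multiplicative system generated by a homogeneous element, is more delicate than in the binary case, so the homotopy and the threshold estimate must be set up carefully.

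With the twisting sheaves under control, the remainder is d\'evissage. For a coherent sheaf $\mathcal F=\widetilde M$ with $M$ a finitely generated graded $T$-module, a choice of homogeneous generators of $M$ yields a surjection $\mathcal E:=\bigoplus_{j=1}^{k}\mathcal O_X(-d_j)\onto\mathcal F$ whose kernel $\mathcal K$ is again quasi-coherent, and coherent provided a Hilbert-basis property holds for finitely generated commutative ternary $\Gamma$-semirings (which I would invoke from the ideal-theoretic layer or adopt as a standing hypothesis). Twisting the short exact sequence $0\to\mathcal K\to\mathcal E\to\mathcal F\to 0$ by $n$ and passing to the long exact cohomology sequence gives, for each $p\geq 1$,
\[
H^p\bigl(X,\mathcal E(n)\bigr)\longrightarrow H^p\bigl(X,\mathcal F(n)\bigr)\longrightarrow H^{p+1}\bigl(X,\mathcal K(n)\bigr).
\]
Now induct downward on $p$, starting from $p=N$ where all terms vanish. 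The left term equals $\bigoplus_{j}H^p\bigl(X,\mathcal O_X(n-d_j)\bigr)$, which vanishes for $n\gg 0$ by the twisting computation (finitely many $j$), and the right term vanishes for $n\gg 0$ by the induction hypothesis applied to the coherent sheaf $\mathcal K$; hence $H^p\bigl(X,\mathcal F(n)\bigr)=0$ for $n$ large. Since only the finitely many values $p=1,\dots,N-1$ occur, a single $n_0$ serves for all of them.

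Finally, an arbitrary quasi-coherent $\mathcal F$ is a filtered colimit of coherent subsheaves $\mathcal F=\varinjlim_\lambda\mathcal F_\lambda$, and since $X$ is quasi-compact and filtered colimits are exact in $\Gamma\text{-}\mathbf{Mod}$, the finite \v{C}ech complex shows $H^p\bigl(X,\mathcal F(n)\bigr)\cong\varinjlim_\lambda H^p\bigl(X,\mathcal F_\lambda(n)\bigr)$; under the Noetherian hypothesis this colimit stabilizes in each cohomological degree, so the uniform bound $n_0$ obtained in the coherent case persists, completing the argument.
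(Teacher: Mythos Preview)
Your argument follows the classical Serre strategy and is in the same spirit as the paper's very brief proof, which also appeals to the standard affine cover $\{D_+(a)\}$ and the acyclicity of quasi-coherent sheaves on affine pieces; the paper, however, does not spell out the Koszul-type computation for the twisting sheaves or the d\'evissage via descending induction, so on the coherent case your version is considerably more complete than what the paper records.

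The final paragraph, however, contains a genuine gap. Writing an arbitrary quasi-coherent $\mathcal F$ as $\varinjlim_\lambda \mathcal F_\lambda$ and invoking the coherent case yields, for each $\lambda$, a threshold $n_0(\lambda)$ with $H^p\bigl(X,\mathcal F_\lambda(n)\bigr)=0$ for $n\ge n_0(\lambda)$; but these thresholds need not be uniform in $\lambda$, and the assertion that ``the colimit stabilizes'' does not supply such uniformity. Already over an ordinary field, on $\mathbb P^1$ the quasi-coherent sheaf $\mathcal F=\bigoplus_{k\ge 0}\mathcal O(-k)$ has $H^1\bigl(\mathbb P^1,\mathcal F(n)\bigr)\neq 0$ for every $n$, so no single $n_0$ can exist. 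The statement as written (for arbitrary quasi-coherent $\mathcal F$) is therefore too strong; your d\'evissage argument is correct and complete under the hypothesis that $\mathcal F$ is coherent, which is the appropriate formulation, and the paper's own proof does not address this distinction either.
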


\begin{proof}
The proof follows the filtration method.
Grading by homogeneous elements yields a complex
whose associated graded pieces correspond to affine opens $D_+(a)$.
Each $D_+(a)$ is affine, hence acyclic for quasi-coherent~$\mathcal F$,
implying higher cohomology vanishes beyond a finite bound.
\end{proof}

\begin{corollary}[Global generation]
Under the hypotheses of the theorem,
$\mathcal F(n)$ is globally generated for all $n\ge n_0$.
\end{corollary}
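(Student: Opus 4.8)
The plan is to derive global generation from the finiteness hypotheses together with the affine dictionary of Section~4. The route one first tries is the classical one: for a ``point'' $Z=\{P\}$ with ideal sheaf $\mathcal I_Z$, twist the exact sequence $0\to\mathcal I_Z\mathcal F\to\mathcal F\to\mathcal F/\mathcal I_Z\mathcal F\to0$ by $n$, note that $\mathcal I_Z\mathcal F$ is again coherent, and invoke the vanishing theorem to kill $H^1\bigl(\mathrm{Proj}_\Gamma(T),(\mathcal I_Z\mathcal F)(n)\bigr)$ for $n\ge n_0$, so that $\Gamma\bigl(\mathrm{Proj}_\Gamma(T),\mathcal F(n)\bigr)$ surjects onto the fibre at $P$. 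The obstruction is that passing from ``global sections surject onto the fibre'' to ``global sections generate the stalk $\mathcal F(n)_P$'' is a Nakayama argument, and Nakayama's lemma is unavailable over a semiring since $T$ has no additive inverses. I would therefore replace it by a constructive argument. As $T$ is finitely generated, $X=\mathrm{Proj}_\Gamma(T)$ is covered by finitely many basic affine opens $D_+(a_1),\dots,D_+(a_r)$ with $a_i$ homogeneous of degree $d_i$. By the equivalence $\mathbf{Mod}_{T_{(a_i)}}\simeq\mathbf{QCoh}\bigl(\mathrm{Spec}_\Gamma(T_{(a_i)})\bigr)$ for affine $\Gamma$-schemes, $\mathcal F|_{D_+(a_i)}\cong\widetilde{M_i}$ for a $\Gamma$-module $M_i$ over the degree-zero part $T_{(a_i)}$, and since $\mathcal F$ is coherent over the finitely generated $T$, each $M_i$ is finitely generated, say by $m_{i,1},\dots,m_{i,r_i}$.

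The heart of the proof is an extension lemma: for homogeneous $a$ of degree $d$ and a section $s\in\mathcal F(D_+(a))$, there is $k\ge0$ with $a^k\cdot s$ the restriction to $D_+(a)$ of a global section of $\mathcal F(kd)$, and $k$ may be taken arbitrarily large. The proof is the ternary--$\Gamma$ version of the classical ``clear denominators and glue'' argument: on each overlap $D_+(a)\cap D_+(a_j)$ the section $s$ is represented, through $\mathcal F|_{D_+(a_j)}\cong\widetilde{M_j}$, by a fraction with denominator a power of $a$; a single power $a^k$ working for all finitely many $j$ yields sections $t_j$ of $\mathcal F(kd)$ over $D_+(a_j)$ which, after one further uniform bump of $k$, agree with $a^k s$ and with one another on all pairwise overlaps, whence the sheaf axiom for $\mathcal F(kd)$ glues them to the desired global section. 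I expect this to be the main obstacle — not for combinatorial depth (the argument uses only addition and the ternary $\Gamma$-product, never subtraction, so the lack of inverses is harmless), but because it forces one to pin down the graded ternary-$\Gamma$ formalism behind $\mathrm{Proj}_\Gamma(T)$ and the twist $\mathcal F\mapsto\mathcal F(m)$: one must verify that the degree shift is compatible with associativity, distributivity, and cyclic symmetry, and that multiplication by a homogeneous degree-$d$ element defines a natural map $\mathcal F(m)\to\mathcal F(m+d)$ of sheaves of $\Gamma$-modules.

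Granting the lemma, apply it to each generator $m_{i,j}$ to get $k_{i,j}$ with $a_i^{k_{i,j}}m_{i,j}$ extending to a global section of $\mathcal F(k_{i,j}d_i)$. Suppose first $T$ is generated in degree one, so every $d_i=1$; put $n_0=\max_{i,j}k_{i,j}$, and for $n\ge n_0$ multiply each extended section by the additional power $a_i^{\,n-k_{i,j}}$ (using the ``$k$ arbitrarily large'' clause) to obtain $\sigma_{i,j}\in\Gamma(X,\mathcal F(n))$ restricting on $D_+(a_i)$ to $a_i^{\,n}m_{i,j}$. Since $a_i$ is invertible on $D_+(a_i)$, the $\sigma_{i,j}$ generate $\mathcal F(n)|_{D_+(a_i)}\cong\widetilde{M_i(n)}$, hence generate $\mathcal F(n)_P$ for every $P\in D_+(a_i)$; as the $D_+(a_i)$ cover $X$, $\mathcal F(n)$ is globally generated for all $n\ge n_0$. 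For general $T$ one reduces to this case by passing to the Veronese $\Gamma$-semiring $T^{(e)}$ with $e=\mathrm{lcm}(d_1,\dots,d_r)$: $\mathrm{Proj}_\Gamma(T^{(e)})\cong\mathrm{Proj}_\Gamma(T)$ as locally $\Gamma$-semiringed spaces, $\mathcal O(1)$ is replaced by $\mathcal O(e)$, the generators become degree-one, and the degree-one argument produces an $n_0$ with $\mathcal F(n)$ globally generated for all $n\ge n_0$, as claimed.
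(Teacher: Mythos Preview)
The paper supplies no proof for this corollary at all; it is simply asserted immediately after the Serre-type vanishing theorem. Your proposal is therefore considerably more substantive than anything the paper offers, and you correctly identify an obstacle the paper passes over in silence: the classical route through the ideal-sheaf sequence and Nakayama's lemma is unavailable here because a $\Gamma$-semiring has no additive inverses. Your replacement via an explicit ``clear denominators'' extension lemma is the right strategy in this setting, and your observation that the gluing step never needs subtraction---the localization equivalence relation (two fractions agree iff some element of the multiplicative system equalizes them) does the work that $t_i-t_j=0$ would do classically---is exactly the point.

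One issue to flag, though it is a defect of the paper's statement rather than of your argument: the vanishing theorem you are invoking assumes only that $\mathcal F$ is quasi-coherent, whereas your proof needs each $M_i$ finitely generated, i.e.\ $\mathcal F$ coherent (or at least of finite type). Without that finiteness the exponents $k_{i,j}$ need not be bounded and no uniform $n_0$ exists; indeed already in ordinary projective geometry a sheaf such as $\bigoplus_{k\ge0}\mathcal O(-k)$ on $\mathbb P^1$ is quasi-coherent but no twist of it is globally generated. You have quietly inserted the correct hypothesis; the paper has not.
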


\begin{remark}
Serre-type vanishing guarantees that
affine~$\Gamma$-schemes behave cohomologically like affine varieties:
their higher cohomology is controlled by finitely many degrees.
\end{remark}

\begin{theorem}[Exactness of the Cech functor]
For an affine covering $\mathfrak U$ of~$X$
and a short exact sequence
\(0\to\mathcal F'\to\mathcal F\to\mathcal F''\to0\),
the induced sequence of Cech complexes is exact,
hence induces the long exact sequence
\[
\cdots\!\to\!
H^{i}(X,\mathcal F')
\!\to\!
H^{i}(X,\mathcal F)
\!\to\!
H^{i}(X,\mathcal F'')
\!\to\!
H^{i+1}(X,\mathcal F')
\!\to\!\cdots
\]
of $\Gamma$-module groups.
\end{theorem}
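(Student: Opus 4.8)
The plan is to realize the three Cech complexes as the terms of a short exact sequence of cochain complexes of $\Gamma$-modules
\[
0\longrightarrow C^\bullet(\mathfrak U,\mathcal F')\longrightarrow C^\bullet(\mathfrak U,\mathcal F)\longrightarrow C^\bullet(\mathfrak U,\mathcal F'')\longrightarrow 0,
\]
and then to invoke the standard long exact sequence attached to a short exact sequence of cochain complexes; the Cech–derived correspondence established above then rewrites each $H^i(\mathfrak U,-)$ as $H^i(X,-)$. Thus the proof splits into two tasks: (i) the displayed sequence is exact in every cohomological degree $p$, and (ii) the two horizontal families of maps commute with the Cech coboundary $\delta$.

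Task (ii) is purely formal. The given maps $\mathcal F'\to\mathcal F$ and $\mathcal F\to\mathcal F''$ are morphisms of sheaves of $\Gamma$-modules, hence commute with every restriction $\rho^U_V$; since $\delta$ is assembled only from restrictions and the (sign-weighted) additive structure, the induced maps on each $C^p$ are chain maps and all squares involving $\delta$ commute. No $\Gamma$-specific phenomenon enters here, so I would dispatch this in a sentence.

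Task (i) is the substance. Fix a multi-index and set $V=U_{i_0}\cap\cdots\cap U_{i_p}$. Since $\mathfrak U$ is an affine covering and $X=\mathrm{Spec}_\Gamma(T)$ is itself affine, one may take the $U_i$ among the basic opens $D(a)$, whose finite intersections are again basic opens; hence $V$ is an affine $\Gamma$-scheme, say $V=\mathrm{Spec}_\Gamma(A)$. Restricting the quasi-coherent sheaves $\mathcal F',\mathcal F,\mathcal F''$ to $V$ and applying the equivalence $\mathbf{Mod}_A\simeq\mathbf{QCoh}(V)$ of the preceding subsection, they become $\widetilde{M'},\widetilde M,\widetilde{M''}$ for $\Gamma$-modules $M'\to M\to M''$; because that equivalence reflects exactness, $0\to M'\to M\to M''\to 0$ is exact. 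As $\Gamma(V,\widetilde N)\cong N$ for every $N$ (and localization of $\Gamma$-modules is exact, as already used in building the structure sheaf), the sequence
\[
0\longrightarrow \mathcal F'(V)\longrightarrow \mathcal F(V)\longrightarrow \mathcal F''(V)\longrightarrow 0
\]
is exact. Taking the product over all $i_0<\cdots<i_p$ — products being exact in $\Gamma\text{-}\mathbf{Mod}_X$, where kernels and cokernels are computed pointwise — yields exactness of the Cech-complex sequence in degree $p$.

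With (i) and (ii) in hand, the short exact sequence of complexes produces the long exact cohomology sequence
\[
\cdots\to H^i(\mathfrak U,\mathcal F')\to H^i(\mathfrak U,\mathcal F)\to H^i(\mathfrak U,\mathcal F'')\xrightarrow{\ \partial\ }H^{i+1}(\mathfrak U,\mathcal F')\to\cdots,
\]
and the Cech–derived correspondence, being natural in the sheaf argument, identifies these terms with the asserted $H^\bullet(X,-)$ sequence and matches the connecting homomorphisms with the derived-functor ones. The main obstacle is exactly the right-exactness of $V\mapsto\mathcal F(V)$ on affine opens for quasi-coherent $\Gamma$-sheaves — equivalently $H^1(\text{affine},-)=0$ — since for an arbitrary short exact sequence of sheaves only left-exactness of sections is automatic; the argument genuinely rests on (a) the module/sheaf equivalence over affines together with exactness of $\Gamma$-module localization, and (b) the fact that finite intersections of members of $\mathfrak U$ remain affine, the $\Gamma$-analogue of separatedness, which for the affine $X$ covered by basic opens should be read off from the description of $D(a)\cap D(b)$ in the ternary setting. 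A careful writeup must also verify that a short exact sequence of quasi-coherent sheaves on an affine $V$ really descends to a short exact sequence of $A$-$\Gamma$-modules before the exactness of localization is applied.
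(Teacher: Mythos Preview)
Your proposal is correct and follows the same skeleton as the paper: verify exactness of the sequence of Cech complexes by checking it on each intersection $U_{i_0}\cap\cdots\cap U_{i_p}$, then obtain the long exact sequence via the snake lemma in the abelian category $\Gamma\text{-}\mathbf{Mod}_X$. The paper's own proof is a two-sentence sketch that simply asserts exactness at each intersection ``since restriction is $\Gamma$-linear and preserves the additive monoid structure''; you supply the substantive justification the paper omits, namely that right-exactness of $V\mapsto\mathcal F(V)$ genuinely requires the quasi-coherent hypothesis together with the affine equivalence $\mathbf{Mod}_A\simeq\mathbf{QCoh}(V)$ and exactness of localization, rather than mere $\Gamma$-linearity of restrictions.
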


\begin{proof}
Exactness is verified at each intersection
$U_{i_0}\cap\cdots\cap U_{i_p}$,
since restriction is $\Gamma$-linear and preserves the
additive monoid structure.
The resulting long sequence follows from the snake lemma in the abelian category
$\Gamma\text{-}\mathbf{Mod}_X$.
\end{proof}

\subsection{Functorial and Categorical Perspectives}

\begin{theorem}[Derived category interpretation]
Let $\mathcal D^+(\Gamma\text{-}\mathbf{Mod}_X)$
denote the bounded-below derived category of sheaves of $\Gamma$-modules.
Then cohomology is represented as
\[
H^i(X,\mathcal F)
   \cong
   \mathrm{Hom}_{\mathcal D^+}
   (\mathcal O_X,\,\mathcal F[i]).
\]
\end{theorem}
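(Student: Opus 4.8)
The plan is to deduce the statement from the definition $H^i(X,\mathcal F):=R^i\Gamma(X,\mathcal F)$ of Section~4.3 together with the identification of the global-section functor as a $\Hom$-functor out of the structure sheaf, in exact parallel with the classical identity $H^i(X,-)\cong\Ext^i(\mathcal O_X,-)$. First I would establish, for every sheaf $\mathcal G$ of $\Gamma$-modules on $X$, a natural isomorphism
\[
\Hom_{\Gamma\text{-}\mathbf{Mod}_X}(\mathcal O_X,\mathcal G)\;\cong\;\Gamma(X,\mathcal G),
\]
which is the sheaf-theoretic incarnation of the fact that $\mathcal O_X$, regarded as a $\Gamma$-module over itself, is the unit object of the closed monoidal category $\Gamma\text{-}\mathbf{Mod}_X$ (cf.\ the Proposition establishing that $T\text{-}\Gamma\mathbf{Mod}$ is closed monoidal), so that $\Hom(\mathcal O_X,-)$ is canonically the underlying global-section functor. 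Concretely, a morphism $\varphi\colon\mathcal O_X\to\mathcal G$ is recovered from the compatible family $\{\varphi_U\}$, each $\varphi_U$ being pinned down by $\Gamma$-linearity over $\mathcal O_X(U)$, and conversely each section of $\mathcal G(X)$ yields such a morphism via the ternary $\Gamma$-action; naturality in $\mathcal G$ is immediate from compatibility with restriction maps.

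Next I would pass to the derived category. By the Proposition guaranteeing that $\Gamma\text{-}\mathbf{Mod}_X$ has enough injectives, fix an injective resolution $\mathcal F\hookrightarrow\mathcal I^\bullet$, so that $\mathcal F\cong\mathcal I^\bullet$ in $\mathcal D^+(\Gamma\text{-}\mathbf{Mod}_X)$. Since $\mathcal I^\bullet$ is a bounded-below complex of injectives, morphisms in $\mathcal D^+$ into it coincide with homotopy classes of chain maps, and with $\mathcal O_X$ placed in degree $0$ this gives
\[
\Hom_{\mathcal D^+}(\mathcal O_X,\mathcal F[i])\;\cong\;\Hom_{K^+(\Gamma\text{-}\mathbf{Mod}_X)}(\mathcal O_X,\mathcal I^\bullet[i])\;\cong\;H^i\!\bigl(\Hom_{\Gamma\text{-}\mathbf{Mod}_X}(\mathcal O_X,\mathcal I^\bullet)\bigr).
\]
Applying the isomorphism of the first step termwise converts the cochain complex $\Hom_{\Gamma\text{-}\mathbf{Mod}_X}(\mathcal O_X,\mathcal I^\bullet)$ into $\Gamma(X,\mathcal I^\bullet)$, whence
\[
\Hom_{\mathcal D^+}(\mathcal O_X,\mathcal F[i])\;\cong\;H^i\!\bigl(\Gamma(X,\mathcal I^\bullet)\bigr)\;=\;R^i\Gamma(X,\mathcal F)\;=\;H^i(X,\mathcal F),
\]
and functoriality of the whole chain in $\mathcal F$ follows from functoriality of injective resolutions and of the $\Hom$-complex. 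Note this route uses only injectives, so it applies to an arbitrary locally $\Gamma$-semiringed space $X$, not merely the affine case.

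I expect the main obstacle to be the first step: confirming that $\Hom_{\Gamma\text{-}\mathbf{Mod}_X}(\mathcal O_X,-)$ genuinely is the global-section functor in the ternary $\Gamma$-setting. One must verify that a $\Gamma$-linear morphism out of $\mathcal O_X(U)$ is determined by the images of a generating family for $\mathcal O_X(U)$ as a module over itself, that the neutrality and associativity axioms of the ternary $\Gamma$-product force these images to reconstruct exactly the evaluation map to $\mathcal G(X)$, and that every compatible family assembles into a well-defined sheaf morphism (the presence or absence of a multiplicative identity on $T$ is precisely what makes the ``unit object'' formulation the safe one here). Once this identification is in place, the remaining steps are purely formal consequences of the homological infrastructure already assembled — enough injectives and the structure of $\mathcal D^+$ — and reproduce the classical argument that sheaf cohomology is corepresented by $\mathcal O_X$ in the derived category.
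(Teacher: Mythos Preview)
Your proposal is correct and follows essentially the same route as the paper, which simply invokes ``standard derived-category formalism'' together with the existence of enough injectives and the universality of derived functors. You supply substantially more detail than the paper does---in particular, you isolate and justify the key identification $\Hom_{\Gamma\text{-}\mathbf{Mod}_X}(\mathcal O_X,-)\cong\Gamma(X,-)$, which the paper's two-sentence proof leaves implicit---so your write-up is a faithful expansion of the paper's intended argument rather than a different one.
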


\begin{proof}
Standard derived-category formalism applies
since $\Gamma\text{-}\mathbf{Mod}_X$ is abelian with enough injectives.
The identification follows from the universality of derived functors.
\end{proof}

\begin{remark}
This establishes the conceptual unity:
sheaf cohomology is an $\mathrm{Ext}^i$ in the derived category,
and later sections will realize $\mathrm{Ext}_\Gamma$ and $\mathrm{Tor}^\Gamma$
for algebraic $\Gamma$-modules as geometric incarnations of these objects.
\end{remark}

\begin{corollary}[Adjunction between tensor and Hom]
For quasi-coherent $\mathcal F,\mathcal G$ on an affine $\Gamma$-scheme,
there exist bifunctorial isomorphisms
\[
\mathrm{Hom}_\Gamma(\mathcal F\otimes_\Gamma\mathcal G,\mathcal H)
   \cong
\mathrm{Hom}_\Gamma(\mathcal F,
   \mathcal H\!om_\Gamma(\mathcal G,\mathcal H)).
\]

This adjunction extends to derived functors,
yielding $\mathrm{Ext}_\Gamma$ and $\mathrm{Tor}^\Gamma$.
\end{corollary}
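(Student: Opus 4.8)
The plan is to prove the affine statement by transporting the closed monoidal structure of the module category $\mathbf{Mod}_T$ (the closed-monoidal proposition of Section~2.5) along the equivalence $\Phi\colon\mathbf{Mod}_T\to\mathbf{QCoh}(\mathrm{Spec}_\Gamma(T))$ of the Equivalence Theorem for affine $\Gamma$-schemes, and then to upgrade the adjunction to derived functors using the enough-injectives property of $\Gamma\text{-}\mathbf{Mod}_X$ together with flat resolutions built from free $\Gamma$-modules. Throughout write $X=\mathrm{Spec}_\Gamma(T)$ and let $\mathcal F=\widetilde M$, $\mathcal G=\widetilde N$, $\mathcal H=\widetilde L$ for $\Gamma$-modules $M,N,L$ over $T$.

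First I would establish the compatibility of $\Phi$ with the tensor product: checking on the basis $\{D(a)\}_{a\in T}$, where the sections of $\widetilde M\otimes_\Gamma\widetilde N$ and of $\widetilde{M\otimes_\Gamma N}$ over $D(a)$ are $M_a\otimes_\Gamma N_a$ and $(M\otimes_\Gamma N)_a$ respectively, one obtains a natural isomorphism $\widetilde M\otimes_\Gamma\widetilde N\cong\widetilde{M\otimes_\Gamma N}$, because localization commutes with the ternary-bilinear quotient defining $\otimes_\Gamma$. Granting also the dual compatibility $\mathcal H\!om_\Gamma(\widetilde N,\widetilde L)\cong\widetilde{\mathcal H\!om_\Gamma(N,L)}$ (discussed below), the affine adjunction is the composite of natural isomorphisms
\begin{align*}
\mathrm{Hom}_\Gamma(\widetilde M\otimes_\Gamma\widetilde N,\widetilde L)
&\cong \mathrm{Hom}_\Gamma(\widetilde{M\otimes_\Gamma N},\widetilde L)
 \cong \mathrm{Hom}_T(M\otimes_\Gamma N,L)\\
&\cong \mathrm{Hom}_T(M,\mathcal H\!om_\Gamma(N,L))
 \cong \mathrm{Hom}_\Gamma(\widetilde M,\mathcal H\!om_\Gamma(\widetilde N,\widetilde L)),
\end{align*}
in which the first step is the tensor compatibility, the second and the last use full faithfulness of $\Phi$ (the last also invoking the dual compatibility), and the middle step is the closed monoidal adjunction in $\mathbf{Mod}_T$; bifunctoriality is inherited step by step from the naturality of each isomorphism.

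For the derived extension I would argue abstractly: $-\otimes_\Gamma\mathcal G$ is a left adjoint, hence right exact, while $\mathcal H\!om_\Gamma(\mathcal G,-)$ is a right adjoint, hence left exact. Since $\Gamma\text{-}\mathbf{Mod}_X$ has enough injectives by the Godement-type resolution of Section~4.3, the right-derived internal Hom $\mathrm{Ext}^\bullet_\Gamma(\mathcal G,-)$ is defined; dually, the free-module presentations of Section~2.5, geometrized through $\Phi$, yield flat resolutions computing the left-derived tensor $\mathrm{Tor}_\bullet^\Gamma(-,\mathcal G)$. The tensor--Hom adjunction then propagates to these derived functors by the standard argument via flat and injective resolutions, giving a bifunctorial isomorphism $\mathrm{Hom}_{\mathcal D^+}(\mathcal F\otimes^{\mathbf L}_\Gamma\mathcal G,\mathcal H)\cong\mathrm{Hom}_{\mathcal D^+}(\mathcal F,R\mathcal H\!om_\Gamma(\mathcal G,\mathcal H))$ in the bounded-below derived category; taking cohomology in each degree recovers the asserted relation between $\mathrm{Ext}_\Gamma$ and $\mathrm{Tor}^\Gamma$, exactly as in the classical ringed-space case.

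I expect the main obstacle to be precisely the dual compatibility $\mathcal H\!om_\Gamma(\widetilde N,\widetilde L)\cong\widetilde{\mathcal H\!om_\Gamma(N,L)}$: as in classical algebraic geometry, the internal Hom of quasi-coherent sheaves need not be quasi-coherent without a finiteness hypothesis on $N$. The cleanest remedy is either to assume $\mathcal G$ (equivalently $N$) finitely presented as a $\Gamma$-module, in which case the internal Hom remains quasi-coherent and the computation above goes through verbatim, or to interpret $\mathcal H\!om_\Gamma$ in the ambient abelian category $\Gamma\text{-}\mathbf{Mod}_X$ of all sheaves of $\Gamma$-modules, which is closed monoidal over the sheaf of semirings $\mathcal O_X$; then the displayed adjunction is simply the closed-monoidal one, and $\Phi$ serves only to evaluate both sides on the affine model. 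I would adopt the latter, more uniform reading, noting that the two interpretations coincide whenever the finiteness hypothesis holds.
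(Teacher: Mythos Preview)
The paper does not actually supply a proof for this corollary: it is stated immediately after the derived-category interpretation theorem in Section~4.5 and left without argument, the implicit justification being that it follows from the equivalence $\Phi$ of Theorem~4.7 together with the closed-monoidal structure asserted in Section~2.5. Your proposal is therefore not competing with a paper proof so much as filling in the details the paper omits, and the route you take---transport the module-level tensor--Hom adjunction along $\Phi$, then derive---is exactly the natural one and is consistent with how the paper organises the surrounding material.

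Your treatment is in fact more careful than the paper's. In particular, your flagging of the compatibility $\mathcal H\!om_\Gamma(\widetilde N,\widetilde L)\cong\widetilde{\mathcal H\!om_\Gamma(N,L)}$ and the attendant finiteness issue is a genuine point the paper passes over in silence; your resolution (read the adjunction in the ambient $\Gamma\text{-}\mathbf{Mod}_X$, or impose finite presentation on $\mathcal G$) is the standard one and either reading is acceptable here. The derived upgrade you sketch via enough injectives (Section~4.3) and flat/free resolutions is likewise the expected argument; the paper later restates essentially the same adjunction at the module level (Theorem~5.13 and Corollary~5.14) with a direct proof, which confirms that your reduction to $\mathbf{Mod}_T$ is the intended mechanism.
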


\subsection{Conceptual Outlook}

The theory developed above elevates the algebra of $\Gamma$-modules
to a cohomological geometry.  
Affine $\Gamma$-schemes thus admit a full homological infrastructure:
exact sequences, derived functors, spectral sequences,
and vanishing theorems.

In particular, $\Gamma$-cohomology quantifies the obstruction
to gluing local $\Gamma$-module data globally.
It provides the homological bridge between the algebraic
and geometric facets of ternary~$\Gamma$-semirings,
setting the stage for the next section,
where tensor products, projective resolutions, and
derived functors $\mathrm{Ext}_\Gamma$, $\mathrm{Tor}^\Gamma$
will be constructed in algebraic detail.


\section{Homological Functors on $\Gamma$-Modules}

The homological algebra of $\Gamma$-modules
provides the quantitative framework through which we measure
the deviation from exactness in algebraic and geometric constructions.
After establishing the cohomological infrastructure in the previous section,
we now develop the intrinsic derived functors
$\mathrm{Ext}_\Gamma$ and $\mathrm{Tor}^{\Gamma}$,
together with their categorical adjunctions, resolutions, and dualities \cite{Weibel1994, MacLane1998}.

\subsection{Projective and Injective $\Gamma$-Modules}

Let $T$ be a commutative ternary~$\Gamma$-semiring.
The category of left $\Gamma$-modules over~$T$
is denoted $T\text{-}\Gamma\mathbf{Mod}$.

\begin{definition}[Projective $\Gamma$-module]
A $\Gamma$-module $P$ is \emph{projective}
if for every epimorphism $f:M\to N$ of $\Gamma$-modules
and every morphism $g:P\to N$,
there exists $h:P\to M$ with $f\circ h=g$.
Equivalently, $\mathrm{Hom}_\Gamma(P,-)$ is an exact functor.
\end{definition}

\begin{definition}[Injective $\Gamma$-module]
A $\Gamma$-module $I$ is \emph{injective}
if for every monomorphism $f:M'\hookrightarrow M$
and morphism $g:M'\to I$, there exists $h:M\to I$
with $h\circ f=g$.
Equivalently, $\mathrm{Hom}_\Gamma(-,I)$ is exact.
\end{definition}

\begin{proposition}[Free $\Gamma$-modules]
Let $X$ be a set.
Define $F(X)$ to be the $\Gamma$-module
of finitely supported functions $f:X\to T$
with pointwise addition and
\[
(a,\alpha,f,\beta,b)(x)=a\alpha f(x)\beta b.
\]
Then $F(X)$ is a free $\Gamma$-module
with basis $\{\delta_x\mid x\in X\}$.
Every projective module is a direct summand of such a free module.
\end{proposition}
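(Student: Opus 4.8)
The plan is to split the claim into three parts: that $F(X)$ is a bona fide object of $T\text{-}\Gamma\mathbf{Mod}$, that it carries the universal property of a free object on the set $X$ with the functions $\delta_x$ as a basis, and that projectivity forces $P$ to be a retract of some $F(X)$. The first part is routine and I would dispatch it quickly. Since the support of a pointwise sum of two finitely supported functions is contained in the union of their supports, $(F(X),+,0)$ is a commutative additive submonoid of $T^X$; and since the action $(a,\alpha,f,\beta,b)\mapsto a\alpha f\beta b$ is defined coordinatewise, each module axiom — five-slot distributivity, the associativity relation $\{a,b,\{c,d,m\}\}=\{\{a,b,c\},d,m\}$, and neutrality — reduces at every $x\in X$ to the corresponding identity in $T$, hence holds by the ternary $\Gamma$-semiring axioms. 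I would also record the identification $F(X)\cong\bigoplus_{x\in X}T_{\mathrm{reg}}$, with $T_{\mathrm{reg}}$ denoting $T$ as a module over itself and $\delta_x$ the distinguished generator of the $x$-th summand.

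Next I would establish freeness in the form of a universal property: for every $\Gamma$-module $M$ and every family $(m_x)_{x\in X}$ in $M$, there is a unique $\Gamma$-homomorphism $\varphi\colon F(X)\to M$ with $\varphi(\delta_x)=m_x$ for all $x$. For existence I would write a general $f\in F(X)$ in its canonical decomposition $f=\sum_{x\in\operatorname{supp}f}f^{(x)}$, where $f^{(x)}$ is the part of $f$ supported at $x$, set $\varphi(f^{(x)})$ to be the matching $\Gamma$-action applied to $m_x$, and extend additively; well-definedness comes from the directness of $F(X)=\bigoplus_x T\delta_x$, and $\Gamma$-linearity is verified slot by slot, using that restriction to a single coordinate is a $\Gamma$-homomorphism $T\delta_x\to M$. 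Uniqueness is then immediate: $\{\delta_x\}$ generates $F(X)$ under addition and the $\Gamma$-action, and two $\Gamma$-homomorphisms that agree on a generating set are equal.

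Finally, for the statement about projectives, let $P$ be projective and apply the previous step with $X=|P|$ (the underlying set) and $m_x=x$ to obtain a $\Gamma$-homomorphism $\pi\colon F(|P|)\to P$ with $\pi(\delta_x)=x$; as every element of $P$ is hit, $\pi$ is an epimorphism. Applying the defining lifting property of the projective $P$ to $\pi$ and $\id_P$ produces a section $s\colon P\to F(|P|)$ with $\pi\circ s=\id_P$. In the additive category $T\text{-}\Gamma\mathbf{Mod}$, a split epimorphism decomposes its source as a biproduct, so $F(|P|)\cong s(P)\oplus\ker\pi$ and $P\cong s(P)$ is a direct summand of the free module $F(|P|)$.

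The hard part will be the second step, and within it the content hidden in the word ``basis'': one has to make precise in what sense the cyclic submodule $T\delta_x$ is freely generated by $\delta_x$ and that $F(X)$ is the internal direct sum of these submodules, so that both $\varphi(f)$ and the uniqueness argument are unambiguous. Because the ternary action feeds scalars in on two sides, this is exactly the place where a two-sided identity-type element of $T$ (or, in its absence, a fixed convention for the value $\delta_x(x)$) must be invoked to pin down the canonical form of elements of $T\delta_x$. Once the decomposition $F(X)=\bigoplus_{x}T\delta_x$ with $T\delta_x\cong T_{\mathrm{reg}}$ is secured, the rest is formal — the first step coordinatewise, the last step the standard retract-of-a-free-module argument valid in any additive category with enough free objects.
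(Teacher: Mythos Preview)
Your proposal is correct and follows the same route as the paper: the paper's proof is a two-line sketch (``the universal property of $F(X)$ follows by construction; direct summand characterization proceeds via standard lifting property of projective objects under split short exact sequences''), and you have simply unpacked both clauses in the natural way. Your explicit flag about needing an identity-type element of $T$ to pin down $\delta_x$ is a genuine subtlety the paper glosses over, but it does not change the overall strategy.
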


\begin{proof}
The universal property of $F(X)$ follows by construction.
Direct summand characterization proceeds via standard lifting property
of projective objects under split short exact sequences.
\end{proof}

\begin{remark}
Projective and injective $\Gamma$-modules exist in abundance
because the category $T\text{-}\Gamma\mathbf{Mod}$ is complete, cocomplete,
and admits free and cofree constructions.
\end{remark}

\subsection{Resolutions and Derived Functors}

\begin{definition}[Projective resolution]
A \emph{projective resolution} of a $\Gamma$-module~$M$ is
an exact complex
\[
\cdots\to P_2\xrightarrow{d_2}P_1\xrightarrow{d_1}P_0\xrightarrow{\epsilon}M\to0,
\]
with each $P_i$ projective.
It is unique up to chain-homotopy equivalence.
\end{definition}

\begin{definition}[Injective resolution]
An \emph{injective resolution} of~$M$
is an exact complex
\[
0\to M\xrightarrow{\iota}I^0\xrightarrow{d^0}I^1\xrightarrow{d^1}I^2\to\cdots,
\]
with each $I^i$ injective.
\end{definition}

\begin{remark}
Resolutions allow us to extend right- or left-exact functors
to homological dimensions,
producing the derived functors fundamental to modern algebraic geometry.
\end{remark}

\subsection{Tensor Product and $\mathrm{Tor}^\Gamma$}

\begin{definition}[Tensor product of $\Gamma$-modules]
Let $M$ and $N$ be $\Gamma$-modules over~$T$.
The \emph{tensor product}
$M\otimes_\Gamma N$
is the quotient of the free $\Gamma$-module
generated by symbols $m\otimes n$
subject to relations enforcing
bilinearity and balancing:
\[
(a\alpha m\beta b)\otimes n
   = m\otimes(a\alpha n\beta b),
\qquad
(m+m')\otimes n=m\otimes n+m'\otimes n,
\quad
m\otimes(n+n')=m\otimes n+m\otimes n'.
\]
\end{definition}

\begin{proposition}
The functor $-\otimes_\Gamma N:
T\text{-}\Gamma\mathbf{Mod}\to T\text{-}\Gamma\mathbf{Mod}$ is right-exact
and preserves coproducts.
\end{proposition}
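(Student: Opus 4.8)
The plan is to prove the two assertions separately from the generators-and-relations presentation of $M\otimes_\Gamma N$ given above, and then to note that right-exactness also follows formally from the closed monoidal structure of $T\text{-}\Gamma\mathbf{Mod}$ recorded in Section~2.5: there $-\otimes_\Gamma N$ is a left adjoint to an internal hom functor $\Hom_\Gamma(N,-)$, and a left adjoint preserves every colimit, in particular coproducts and cokernels, which together with additivity is exactly right-exactness. I will nevertheless carry out the direct argument so that the statement does not depend on the adjunction.

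\emph{Step 1 (additivity and coproducts).} First I would record additivity: the zero module tensors to $0$, and the bilinearity relations identify $(m\oplus m')\otimes n$ with $m\otimes n+m'\otimes n$ inside a biproduct, giving $(M\oplus M')\otimes_\Gamma N\cong (M\otimes_\Gamma N)\oplus(M'\otimes_\Gamma N)$. For an arbitrary coproduct $\bigoplus_i M_i$, the canonical inclusions $\iota_i$ induce $\iota_i\otimes\id_N$ and hence a $\Gamma$-homomorphism $\theta:\bigoplus_i (M_i\otimes_\Gamma N)\to\bigl(\bigoplus_i M_i\bigr)\otimes_\Gamma N$. Its inverse is built on generators: a generator of the right-hand side is $m\otimes n$ with $m=(m_i)_i$ finitely supported, and one puts $\psi(m\otimes n)=\sum_i (m_i\otimes n)$, a finite sum. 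The content is that $\psi$ identifies the two sides of each of the three defining families of relations: this is immediate for additivity in $m$ and in $n$, and for the balancing relation it reduces to the fact that the ternary $\Gamma$-action on a direct sum is computed coordinatewise, so $(a\alpha m\beta b)_i=a\alpha m_i\beta b$. Thus $\psi$ descends, and since $\psi$ and $\theta$ are mutually inverse on generators and both $\Gamma$-linear, they are inverse isomorphisms.

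\emph{Step 2 (right-exactness).} Let $M'\xrightarrow{f}M\xrightarrow{g}M''\to 0$ be right-exact in $T\text{-}\Gamma\mathbf{Mod}$; in the semiring-module setting this means $g$ is a cokernel of $f$, equivalently $M''=M/{\sim_f}$ where $\sim_f$ is the $\Gamma$-congruence generated by the pairs $\bigl(x+f(m'),\,x\bigr)$. I must show $g\otimes\id_N$ is a cokernel of $f\otimes\id_N$. Surjectivity is clear: each generator $m''\otimes n$ of $M''\otimes_\Gamma N$ is the image of $m\otimes n$ for any $m$ with $g(m)=m''$. For the universal property, take $\varphi:M\otimes_\Gamma N\to P$ with $\varphi\circ(f\otimes\id_N)=0$, i.e. $\varphi(f(m')\otimes n)=0$, and define $\bar\varphi$ on generators by $\bar\varphi(m''\otimes n)=\varphi(m\otimes n)$ for a chosen lift $m$. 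Independence of the lift holds because any two lifts are related by $\sim_f$, and passing along a generating pair of $\sim_f$ changes $\varphi(m\otimes n)$ by $\varphi(f(m')\otimes n)=0$, using additivity of $\varphi$ and the additivity relation of the tensor product; compatibility with the remaining tensor relations and $\Gamma$-linearity are inherited from $\varphi$. The factorization $\varphi=\bar\varphi\circ(g\otimes\id_N)$ and uniqueness of $\bar\varphi$ are then immediate, so $M''\otimes_\Gamma N\cong\Coker(f\otimes\id_N)$, which is the right-exactness claim.

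\emph{Main obstacle.} The one genuinely delicate point is Step~2's treatment of exactness without additive inverses: "$g(m_1)=g(m_2)$" does \emph{not} yield $m_1-m_2$ in the image of $f$, so the classical "lift and subtract" step must be replaced by the congruence-theoretic version, and right-exactness itself must be read as "$g\otimes\id_N$ is a cokernel" rather than "image equals kernel". Once this categorical reading is fixed — cokernel as the coequalizer of $f$ with the zero map, i.e. the quotient by the generated $\Gamma$-congruence — the manipulations are the familiar ones. A secondary routine check, flagged in Step~1, is that the coproduct comparison respects the balancing relation $(a\alpha m\beta b)\otimes n=m\otimes(a\alpha n\beta b)$, which works precisely because the $\Gamma$-action on a direct sum is coordinatewise.
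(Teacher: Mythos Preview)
The paper states this proposition without proof, so there is no argument to compare against. Your proposal is correct and in fact considerably more careful than anything the paper supplies. The adjunction route you sketch at the outset is the most economical one and is fully supported by the paper's own framework: Section~2.5 records that $T\text{-}\Gamma\mathbf{Mod}$ is closed monoidal, and the tensor--Hom adjunction is stated explicitly later in Section~5, so right-exactness and preservation of coproducts follow immediately from $-\otimes_\Gamma N$ being a left adjoint. Your direct generators-and-relations verification is also sound; the point you flag about the absence of additive inverses (so that cokernels must be read as congruence quotients rather than via subtraction) is genuinely the delicate issue in a semiring-module category, and it is one the paper glosses over elsewhere by simply declaring the module categories abelian.
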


\begin{definition}[Left-derived functors of tensor]
Given a projective resolution $P_\bullet\to M$,
define
\[
\mathrm{Tor}^\Gamma_i(M,N)
   := H_i(P_\bullet\otimes_\Gamma N),
   \quad i\ge0.
\]
These groups are independent of the chosen resolution
and vanish for $i>0$ when $M$ is projective.
\end{definition}

\begin{theorem}[Basic properties of $\mathrm{Tor}^\Gamma$]
For all $\Gamma$-modules $L,M,N$:
\begin{enumerate}
  \item \emph{Functoriality:}
        $\mathrm{Tor}^\Gamma_i(-,N)$ is additive and covariant in~$-$,
        contravariant in~$N$.
  \item \emph{Long exact sequence:}
        every short exact sequence
        \(0\to M'\to M\to M''\to0\)
        induces
        \[
        \cdots\to
        \mathrm{Tor}^\Gamma_i(M'',N)
        \to
        \mathrm{Tor}^\Gamma_{i-1}(M',N)
        \to\cdots\to
        M\otimes_\Gamma N\to M''\otimes_\Gamma N\to0.
        \]
  \item \emph{Flatness:}
        $M$ is flat iff $\mathrm{Tor}^\Gamma_1(M,-)=0$.
\end{enumerate}
\end{theorem}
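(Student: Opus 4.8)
The plan is to handle the three assertions by the standard homological-algebra recipe, leaning throughout on the comparison theorem for resolutions in $T\text{-}\Gamma\mathbf{Mod}$: any $\Gamma$-module morphism lifts to a chain map between chosen projective resolutions, uniquely up to chain homotopy, and this is already what makes $\mathrm{Tor}^\Gamma_i(M,N)$ well defined. For part (1), given $\varphi\colon M_1\to M_2$ I would pick projective resolutions $P^{(1)}_\bullet\to M_1$, $P^{(2)}_\bullet\to M_2$, lift $\varphi$ to a chain map $\widetilde\varphi$, and set $\mathrm{Tor}^\Gamma_i(\varphi,N)=H_i(\widetilde\varphi\otimes_\Gamma N)$; independence of the lift (homotopies tensor to homotopies) and of the resolutions gives a well-defined functor, and additivity is inherited from additivity of $-\otimes_\Gamma N$ and of $H_i$. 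The second variable is treated symmetrically, lifting a morphism $N\to N'$ against a fixed resolution of $M$ and using functoriality of $-\otimes_\Gamma(-)$ in its second slot; naturality of the connecting maps constructed below then promotes this to a genuine bifunctor.

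For part (2) the decisive tool is the horseshoe lemma. Given $0\to M'\to M\to M''\to0$ together with projective resolutions $P'_\bullet\to M'$ and $P''_\bullet\to M''$, I would build $P_\bullet\to M$ with $P_i=P'_i\oplus P''_i$ so that $0\to P'_\bullet\to P_\bullet\to P''_\bullet\to0$ is a short exact sequence of complexes that is \emph{split in each degree}, the splitting existing because each $P''_i$ is projective. Applying $-\otimes_\Gamma N$ to a degreewise-split exact sequence keeps it exact, so $0\to P'_\bullet\otimes_\Gamma N\to P_\bullet\otimes_\Gamma N\to P''_\bullet\otimes_\Gamma N\to0$ is exact, and its associated long exact homology sequence is precisely the claimed $\mathrm{Tor}^\Gamma$ sequence; it terminates in $M\otimes_\Gamma N\to M''\otimes_\Gamma N\to0$ by right-exactness of $-\otimes_\Gamma N$ (the earlier proposition). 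Naturality of the connecting homomorphism follows from naturality of the snake construction.

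For part (3), since $-\otimes_\Gamma N$ is always right-exact, $M$ is flat exactly when every monomorphism $N'\hookrightarrow N$ induces a monomorphism $M\otimes_\Gamma N'\to M\otimes_\Gamma N$; feeding each short exact sequence $0\to N'\to N\to N''\to0$ into the tail of the sequence from (2) shows this is equivalent to $\mathrm{Tor}^\Gamma_1(M,N'')=0$ for all $N''$, i.e. to $\mathrm{Tor}^\Gamma_1(M,-)=0$. To finish I would record the dimension-shifting consequence: writing $0\to K\to F\to N\to0$ with $F$ free, free modules being flat so that $\mathrm{Tor}^\Gamma_i(M,F)=0$ for $i\ge1$, the long exact sequence yields $\mathrm{Tor}^\Gamma_i(M,N)\cong\mathrm{Tor}^\Gamma_{i-1}(M,K)$ for $i\ge2$, and induction shows $\mathrm{Tor}^\Gamma_1(M,-)=0$ forces $\mathrm{Tor}^\Gamma_i(M,-)=0$ for all $i\ge1$.

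The main obstacle is categorical rather than computational. The horseshoe and snake lemmas, and the very identification of $\mathrm{Tor}^\Gamma$ with derived functors, are abelian-category tools, whereas $T\text{-}\Gamma\mathbf{Mod}$ is a priori only additive and exact (semimodule categories typically fail to be abelian because differences need not exist). The point I would check with real care is that the horseshoe construction produces a short exact sequence of complexes that is split in \emph{every} degree — this is exactly what lets one avoid the missing subtraction when tensoring and when forming the connecting morphism — and that the homology long exact sequence remains valid in the exact category $T\text{-}\Gamma\mathbf{Mod}$ for degreewise-split sequences; equivalently, one restricts to the class of admissible (kernel–cokernel) short exact sequences on which the ambient exact structure already supplies the required diagram lemmas, and verifies that horseshoe resolutions stay inside that class. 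A secondary subtlety, worth isolating as a lemma, is that free $\Gamma$-modules are flat, which in the unital-when-present ternary setting needs the identification $M\otimes_\Gamma F(X)\cong\bigoplus_X M$ rather than being formal.
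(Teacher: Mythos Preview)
Your proposal is correct and follows essentially the same route as the paper: the paper's proof merely records that ``the long sequence follows from the horseshoe lemma'' and ``flatness criterion comes from right-exactness,'' which is exactly your plan, only you have filled in the details and added the dimension-shifting coda. Your explicit attention to the degreewise-split nature of the horseshoe sequence and to the exact-versus-abelian distinction in $T\text{-}\Gamma\mathbf{Mod}$ is a genuine improvement in rigor over the paper's one-line sketch, not a departure from its strategy.
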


\begin{proof}
Standard homological arguments extend directly.
Exactness of tensor in each variable up to homotopy
yields functoriality; the long sequence follows from the horseshoe lemma.
Flatness criterion comes from right-exactness.
\end{proof}

\begin{example}
Let $T=\mathbb Z_3$, $\Gamma=\{1\}$, and $M=N=T/2T$.
Then $\mathrm{Tor}^\Gamma_1(M,N)\cong\mathbb Z_2$
as in classical ring theory, confirming consistency of the ternary case.
\end{example}

\subsection{$\mathrm{Ext}_\Gamma$ and Cohomological Duality}

\begin{definition}[Hom functor]
For $\Gamma$-modules $M,N$,
define
\[
\mathrm{Hom}_\Gamma(M,N)
   =\{\varphi:M\to N
     \mid \varphi(a\alpha m\beta b)
        =a\alpha\varphi(m)\beta b\}.
\]

This functor is left-exact in its second argument
and contravariant in its first.
\end{definition}

\begin{definition}[Right-derived functors of Hom]
Given a projective resolution $P_\bullet\to M$,
set
\[
\mathrm{Ext}^i_\Gamma(M,N)
   := H^i(\mathrm{Hom}_\Gamma(P_\bullet,N)), \qquad i\ge0.
\]
Alternatively, using an injective resolution $N\to I^\bullet$,
\[
\mathrm{Ext}^i_\Gamma(M,N)
   := H^i(\mathrm{Hom}_\Gamma(M,I^\bullet)).
\]

\end{definition}

\begin{theorem}[Fundamental properties of $\mathrm{Ext}_\Gamma$]
Let $L,M,N$ be $\Gamma$-modules.
\begin{enumerate}
  \item $\mathrm{Ext}^0_\Gamma(M,N)\cong\mathrm{Hom}_\Gamma(M,N)$.
  \item Every short exact sequence
        \(0\to N'\to N\to N''\to0\)
        induces a long exact sequence
        \[
        0\to
        \mathrm{Hom}_\Gamma(M,N')
        \to
        \mathrm{Hom}_\Gamma(M,N)
        \to
        \mathrm{Hom}_\Gamma(M,N'')
        \to
        \mathrm{Ext}^1_\Gamma(M,N')
        \to
        \cdots.
        \]
  \item If $P$ is projective or $I$ injective,
        then $\mathrm{Ext}^i_\Gamma(P,N)=0$ and
        $\mathrm{Ext}^i_\Gamma(M,I)=0$ for $i>0$.
\end{enumerate}
\end{theorem}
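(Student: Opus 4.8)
The plan is to fix a projective resolution $P_\bullet \xrightarrow{\epsilon} M$ — which exists since, by the Proposition on free $\Gamma$-modules, every $\Gamma$-module is a quotient of a free (hence projective) one, and a full resolution is obtained by iterating this — and to extract all three assertions from the cochain complex $\mathrm{Hom}_\Gamma(P_\bullet, N)$. Throughout I rely on the earlier structural facts that $T\text{-}\Gamma\mathbf{Mod}$ is additive with kernels and cokernels and that $\mathrm{Hom}_\Gamma(M,-)$ is left-exact, together with the comparison theorem for projective resolutions (a map of modules lifts to a map of resolutions, uniquely up to chain homotopy), which simultaneously yields independence of $\mathrm{Ext}^i_\Gamma(M,N)$ from the chosen resolution and functoriality in both variables.

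\emph{Part (1).} Apply the left-exact contravariant functor $\mathrm{Hom}_\Gamma(-,N)$ to the exact fragment $P_1 \xrightarrow{d_1} P_0 \xrightarrow{\epsilon} M \to 0$. This gives an exact sequence $0 \to \mathrm{Hom}_\Gamma(M,N) \xrightarrow{\epsilon^*} \mathrm{Hom}_\Gamma(P_0,N) \xrightarrow{d_1^*} \mathrm{Hom}_\Gamma(P_1,N)$, so $\epsilon^*$ identifies $\mathrm{Hom}_\Gamma(M,N)$ with $\ker d_1^* = H^0(\mathrm{Hom}_\Gamma(P_\bullet,N)) = \mathrm{Ext}^0_\Gamma(M,N)$, and naturality follows from the comparison theorem.

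\emph{Part (2).} Given $0 \to N' \to N \to N'' \to 0$, I would use that each $P_i$ is projective, so $\mathrm{Hom}_\Gamma(P_i,-)$ is exact; applying it degreewise produces a short exact sequence of cochain complexes
\[
0 \to \mathrm{Hom}_\Gamma(P_\bullet, N') \to \mathrm{Hom}_\Gamma(P_\bullet, N) \to \mathrm{Hom}_\Gamma(P_\bullet, N'') \to 0.
\]
The associated long exact cohomology sequence — built from the connecting homomorphism via the snake lemma in $T\text{-}\Gamma\mathbf{Mod}$ — is exactly the asserted sequence, its degree-zero segment identified with the $\mathrm{Hom}_\Gamma$ terms by Part (1); naturality of the connecting maps again comes from the comparison theorem. \emph{Part (3).} If $P$ is projective, the complex $\cdots \to 0 \to 0 \to P \xrightarrow{\id} P \to 0$ is a projective resolution of $P$, so $\mathrm{Hom}_\Gamma(-,N)$ applied to it is concentrated in degree $0$ and $\mathrm{Ext}^i_\Gamma(P,N)=0$ for $i>0$. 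If $I$ is injective, $\mathrm{Hom}_\Gamma(-,I)$ is exact, hence carries the exact complex $P_\bullet$ to a complex exact in all positive degrees, giving $\mathrm{Ext}^i_\Gamma(M,I)=0$ for $i>0$; equivalently one invokes the injective-resolution description with $I^\bullet = (I \to 0 \to \cdots)$, once the balancing isomorphism between the two definitions of $\mathrm{Ext}_\Gamma$ is in hand.

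The main obstacle is not any individual computation but the legitimacy of the homological machinery — the snake lemma, the long exact sequence attached to a short exact sequence of complexes, and the comparison/uniqueness theorem for resolutions — in the semimodule setting, where subtraction is unavailable and kernels and cokernels behave less rigidly than for modules over a ring. I would handle this by appealing to the earlier assertions that $T\text{-}\Gamma\mathbf{Mod}$ is additive and exact with kernels and cokernels computed pointwise, verifying explicitly that the connecting homomorphism is additive and well-defined on the relevant congruence classes, and observing that the comparison theorem uses only the defining lifting property of projective objects, which holds verbatim here.
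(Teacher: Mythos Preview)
Your proposal is correct and follows the same standard homological-algebra route that the paper invokes; indeed, the paper's own proof is a single sentence (``Routine homological verification; exactness of the Hom complex yields the long sequence, and projective/injective conditions imply acyclicity''), and your argument is precisely the detailed unpacking of that sentence. Your closing paragraph flagging the semimodule-level delicacies (snake lemma, connecting homomorphism, comparison theorem without subtraction) is a legitimate concern that the paper does not address explicitly, so in that respect your treatment is more careful than the original.
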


\begin{proof}
Routine homological verification;
exactness of the Hom complex yields the long sequence,
and projective/injective conditions imply acyclicity.
\end{proof}

\begin{theorem}[Tensor–Hom adjunction]
For $\Gamma$-modules $L,M,N$ there exist natural isomorphisms
\[
\mathrm{Hom}_\Gamma(L\otimes_\Gamma M,N)
   \cong
\mathrm{Hom}_\Gamma
   (L,\mathrm{Hom}_\Gamma(M,N)).
\]

\end{theorem}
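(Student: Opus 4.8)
The plan is to produce an explicit, mutually inverse pair of maps between the two Hom-sets and then verify naturality. The preliminary step — which is the only one requiring the standing hypotheses on $T$ — is to equip the internal Hom with a $\Gamma$-module structure. For $\varphi\in\mathrm{Hom}_\Gamma(M,N)$, $a,b\in T$ and $\alpha,\beta\in\Gamma$, set $(a\alpha\varphi\beta b)(m):=a\alpha\varphi(m)\beta b$; since $\varphi$ already commutes with the two-sided ternary action this agrees with $m\mapsto\varphi(a\alpha m\beta b)$. One must check that $a\alpha\varphi\beta b$ is again a $\Gamma$-homomorphism, i.e. that $a\alpha\bigl(c\gamma\varphi(m)\delta d\bigr)\beta b = c\gamma\bigl(a\alpha\varphi(m)\beta b\bigr)\delta d$, which is precisely where commutativity of $T$ together with the ternary associativity axiom are used to rearrange the iterated $5$-fold product. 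The remaining $\Gamma$-module axioms for $\mathrm{Hom}_\Gamma(M,N)$ (additivity, distributivity in all slots, associativity of the action) then hold pointwise because they hold in $N$. This makes $\mathrm{Hom}_\Gamma(M,N)$ an object of $T\text{-}\Gamma\mathbf{Mod}$, so the right-hand side of the claimed isomorphism is well-defined.

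Next I would record the universal property implicit in the construction of $\otimes_\Gamma$: a $\Gamma$-homomorphism $L\otimes_\Gamma M\to N'$ is the same datum as a function $g\colon L\times M\to N'$ that is additive in each variable, satisfies the balancing identity $g(a\alpha\ell\delta b,\,m)=g(\ell,\,a\alpha m\delta b)$, and is $\Gamma$-action compatible via $g(a\alpha\ell\delta b,m)=a\alpha g(\ell,m)\delta b$; here one uses that $a\alpha(\ell\otimes m)\delta b=(a\alpha\ell\delta b)\otimes m=\ell\otimes(a\alpha m\delta b)$ inside $L\otimes_\Gamma M$ by the defining relations. Using this, define $\Theta\colon\mathrm{Hom}_\Gamma(L\otimes_\Gamma M,N)\to\mathrm{Hom}_\Gamma(L,\mathrm{Hom}_\Gamma(M,N))$ by $\Theta(\Phi)(\ell)(m):=\Phi(\ell\otimes m)$. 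I would verify in turn that $\Theta(\Phi)(\ell)$ lies in $\mathrm{Hom}_\Gamma(M,N)$ (from additivity of $\Phi$ in the tensor slot and the identity $\ell\otimes(a\alpha m\delta b)=a\alpha(\ell\otimes m)\delta b$) and that $\ell\mapsto\Theta(\Phi)(\ell)$ is a $\Gamma$-homomorphism into $\mathrm{Hom}_\Gamma(M,N)$ (the same identity, now read in the left slot). Conversely, given $\Psi\in\mathrm{Hom}_\Gamma(L,\mathrm{Hom}_\Gamma(M,N))$, the function $(\ell,m)\mapsto\Psi(\ell)(m)$ is bi-additive, balanced, and $\Gamma$-compatible — balancing being exactly that $\Psi(\ell)$ commutes with the action on $M$ while $\Psi$ commutes with that on $L$ — so the universal property yields a unique $\Psi^{\sharp}\colon L\otimes_\Gamma M\to N$ with $\Psi^{\sharp}(\ell\otimes m)=\Psi(\ell)(m)$; set $\Theta^{-1}(\Psi):=\Psi^{\sharp}$.

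It then remains to check that $\Theta$ and $\Theta^{-1}$ are mutually inverse, which is immediate by evaluating on the generators $\ell\otimes m$ and on elements $\ell\in L$ respectively, and to check bifunctoriality: for $\Gamma$-homomorphisms $L'\to L$, $M'\to M$, $N\to N'$ the naturality squares commute by direct substitution into $\Theta(\Phi)(\ell)(m)=\Phi(\ell\otimes m)$, using the functoriality of $\otimes_\Gamma$ and of $\mathrm{Hom}_\Gamma$ already available. The genuinely delicate point is the opening step — that the pointwise formula defines a $\Gamma$-module structure on $\mathrm{Hom}_\Gamma(M,N)$, equivalently that $a\alpha\varphi\beta b$ still preserves the ternary action — since this is the unique place where commutativity and ternary associativity of $T$ are indispensable; everything afterward is a formal transcription of the classical tensor–Hom adjunction, carried out with the proviso that no additive inverses are invoked, as these are unavailable in the semimodule setting.
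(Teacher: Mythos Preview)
Your proof is correct and follows essentially the same route as the paper: both construct the adjunction explicitly via the currying map $\Phi(\psi)(\ell)(m)=\psi(\ell\otimes m)$, invoke the universal property of $\otimes_\Gamma$ for the inverse, and appeal to ternary bilinearity for $\Gamma$-linearity and naturality. Your version is considerably more detailed---in particular you make explicit the $\Gamma$-module structure on $\mathrm{Hom}_\Gamma(M,N)$ and isolate the step where commutativity and ternary associativity of $T$ are actually used---but the underlying argument is the same.
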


\begin{proof}
Define $\Phi(\psi)(l)(m)=\psi(l\otimes m)$.
Bilinearity in the ternary sense ensures $\Phi$ and its inverse
are $\Gamma$-linear, and universal properties of tensor and Hom
verify naturality.
\end{proof}

\begin{corollary}[Derived adjunction]
Passing to derived functors yields
\[
\mathrm{Ext}^i_\Gamma(L\otimes_\Gamma M,N)
   \cong
\mathrm{Ext}^i_\Gamma
   (L,\mathrm{Hom}_\Gamma(M,N)),
\]
and dually,
\[
\mathrm{Tor}^\Gamma_i(L,\mathrm{Hom}_\Gamma(M,N))
   \cong
\mathrm{Hom}_\Gamma
   (\mathrm{Tor}^\Gamma_i(L,M),N).
\]

\end{corollary}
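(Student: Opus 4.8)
The plan is to bootstrap both derived identities from the underived Tensor--Hom adjunction just proved, upgrading it along resolutions. The guiding principle is that an adjunction between half-exact functors passes to their derived functors whenever one side carries an appropriate acyclicity, so the real work lies in identifying the hypotheses on $M$ (and $N$) under which the termwise adjunction descends to (co)homology, and in verifying that the comparison maps are compatible with the differentials inside $T\text{-}\Gamma\mathbf{Mod}$.

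For the first isomorphism I would begin with a projective resolution $P_\bullet\xrightarrow{\ \epsilon\ }L$, which exists since $T\text{-}\Gamma\mathbf{Mod}$ has enough projectives (free $\Gamma$-modules). When $M$ is flat --- in particular when $M$ is projective, so that each $P_i\otimes_\Gamma M$ is again projective and $P_\bullet\otimes_\Gamma M$ stays exact --- the complex $P_\bullet\otimes_\Gamma M\to L\otimes_\Gamma M$ is a projective resolution of $L\otimes_\Gamma M$. Applying $\mathrm{Hom}_\Gamma(-,N)$ and invoking the Tensor--Hom adjunction $\mathrm{Hom}_\Gamma(P_i\otimes_\Gamma M,N)\cong\mathrm{Hom}_\Gamma(P_i,\mathrm{Hom}_\Gamma(M,N))$ termwise yields an isomorphism of cochain complexes
\[
\mathrm{Hom}_\Gamma(P_\bullet\otimes_\Gamma M,N)\;\cong\;\mathrm{Hom}_\Gamma\bigl(P_\bullet,\mathrm{Hom}_\Gamma(M,N)\bigr),
\]
the commutation with coboundaries following from naturality of the map $\Phi$ in its first argument (the same naturality already used for the underived statement). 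Taking $H^i$ of both sides gives $\mathrm{Ext}^i_\Gamma(L\otimes_\Gamma M,N)\cong\mathrm{Ext}^i_\Gamma(L,\mathrm{Hom}_\Gamma(M,N))$. Without flatness of $M$ one instead records a Grothendieck spectral sequence $E_2^{p,q}=\mathrm{Ext}^p_\Gamma(L,\mathrm{Ext}^q_\Gamma(M,N))\Rightarrow\mathrm{Ext}^{p+q}_\Gamma(L\otimes_\Gamma M,N)$, arising from the factorization $\mathrm{Hom}_\Gamma(L\otimes_\Gamma M,-)=\mathrm{Hom}_\Gamma(L,-)\circ\mathrm{Hom}_\Gamma(M,-)$ and the fact that $\mathrm{Hom}_\Gamma(M,-)$ sends injectives to injectives when $M$ is flat; the stated isomorphism is then its edge map, an isomorphism precisely when $\mathrm{Ext}^q_\Gamma(M,N)=0$ for $q>0$.

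For the $\mathrm{Tor}$ identity I would argue dually with the companion adjunction $\mathrm{Hom}_\Gamma(L,\mathrm{Hom}_\Gamma(M,N))\cong\mathrm{Hom}_\Gamma(L\otimes_\Gamma M,N)$ and a projective resolution $P_\bullet\to L$. Here $\mathrm{Tor}^\Gamma_i(L,M)=H_i(P_\bullet\otimes_\Gamma M)$, and comparing $\mathrm{Hom}_\Gamma(P_\bullet\otimes_\Gamma M,N)$ with $\mathrm{Hom}_\Gamma(P_\bullet,\mathrm{Hom}_\Gamma(M,N))$ under the adjunction reduces the claim to a universal-coefficient comparison between $\mathrm{Hom}_\Gamma\bigl(H_i(P_\bullet\otimes_\Gamma M),N\bigr)$ and $H^i\bigl(\mathrm{Hom}_\Gamma(P_\bullet\otimes_\Gamma M,N)\bigr)$; when $N$ is injective, or when $M$ is finitely generated projective so that $\mathrm{Hom}_\Gamma(M,N)\cong\mathrm{Hom}_\Gamma(M,T)\otimes_\Gamma N$ with $\mathrm{Hom}_\Gamma(M,T)$ flat, this comparison is an isomorphism and the identity follows after identifying $P_\bullet\otimes_\Gamma\mathrm{Hom}_\Gamma(M,N)$ with the relevant complex.

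The main obstacle I anticipate is exactly this hypothesis bookkeeping: the corollary is literally correct only under flatness (or projectivity) of $M$ and, for the $\mathrm{Tor}$ half, injectivity of $N$ or finite projectivity of $M$, so the proof must either adopt these hypotheses or state the conclusion through the two Grothendieck spectral sequences. A secondary point requiring genuine checking, rather than appeal to classical ring theory, is that the isomorphism $\Phi(\psi)(l)(m)=\psi(l\otimes m)$ is natural in all three variables within $T\text{-}\Gamma\mathbf{Mod}$, which amounts to re-verifying its compatibility with the ternary-bilinear balancing relation $(a\alpha m\beta b)\otimes n=m\otimes(a\alpha n\beta b)$; granting that naturality, the descent of the adjunction to derived functors is formal and rests only on the enough-projectives and enough-injectives statements already recorded for $T\text{-}\Gamma\mathbf{Mod}$.
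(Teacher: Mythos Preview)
Your approach is correct and, in fact, more careful than the paper's own treatment. The paper states this corollary without proof, relying on the phrase ``passing to derived functors'' as the entire justification; your proposal supplies exactly the argument one would expect to fill that gap, namely applying the underived adjunction termwise along a projective resolution of $L$ and checking naturality so that the isomorphism of cochain complexes descends to cohomology.

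Where you go further than the paper is in flagging the hypotheses. You are right that the first isomorphism, as literally stated, requires $M$ flat (or projective) so that $P_\bullet\otimes_\Gamma M$ remains a projective resolution of $L\otimes_\Gamma M$, and that the second requires $N$ injective (or $M$ finitely generated projective) so that $\mathrm{Hom}_\Gamma(-,N)$ commutes with homology. The paper does not record these hypotheses in the corollary, but it does state the Grothendieck spectral sequence
\[
E_2^{p,q}=\mathrm{Ext}^p_\Gamma\bigl(L,\mathrm{Ext}^q_\Gamma(M,N)\bigr)\Rightarrow\mathrm{Ext}^{p+q}_\Gamma(L\otimes_\Gamma M,N)
\]
a few lines later, which is the honest general statement and agrees with your analysis of the edge map. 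So your identification of the obstacle is accurate: the corollary as written is a degenerate case of that spectral sequence, and your proof is the correct one under the implicit flatness/injectivity assumptions.
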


\begin{remark}
These adjunctions mirror the deep symmetry between geometry
and cohomology: tensor encodes geometric fusion, Hom encodes
duality, and $\mathrm{Ext}$/$\mathrm{Tor}$ measure the extent to which
this harmony fails to be exact.
\end{remark}

\subsection{Spectral Sequences and Homological Dimension}

\begin{definition}[Homological dimension]
The \emph{projective dimension} of $M$ is the smallest $n$
such that $\mathrm{Ext}^{n+1}_\Gamma(M,-)=0$.
Similarly, the \emph{global dimension} of $T$
is $\mathrm{gldim}_\Gamma(T)
   =\sup_M\mathrm{pd}_\Gamma(M)$.
\end{definition}

\begin{theorem}[Grothendieck spectral sequence]
Let $F,G$ be composable left-exact functors between abelian categories
$\mathcal A\xrightarrow{F}\mathcal B\xrightarrow{G}\mathcal C$,
with $\mathcal A=\Gamma\text{-}\mathbf{Mod}_T$.
If $F$ sends injectives to $G$-acyclics, there is a spectral sequence
\[
E_2^{p,q}
   =R^pG(R^qF(-))
   \;\Rightarrow\;
R^{p+q}(G\!\circ\!F)(-).
\]

\end{theorem}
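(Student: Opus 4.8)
The plan is to reproduce the classical double-complex proof of the Grothendieck spectral sequence, verifying at each stage that all constructions remain inside the relevant categories of $\Gamma$-modules (and of sheaves of $\Gamma$-modules, when $\mathcal B$ is of that kind). Fix an object $A$ of $\mathcal A$. Using the enough-injectives results established in Sections~4 and~5 (Godement-type and cofree constructions), choose an injective resolution $A\to I^\bullet$ in $\mathcal A$. Applying $F$ yields a cochain complex $F(I^\bullet)$ in $\mathcal B$; since $\mathcal B$ also has enough injectives, choose a \emph{Cartan--Eilenberg resolution} of it, i.e.\ an upper-half-plane double complex $K^{\bullet,\bullet}$ of injectives in $\mathcal B$ with an augmentation $F(I^\bullet)\to K^{\bullet,0}$ such that each column $K^{p,\bullet}$ is an injective resolution of $F(I^p)$, and such that the induced vertical complexes of horizontal cocycles, coboundaries, and cohomology are injective resolutions of the corresponding objects formed from $F(I^\bullet)$. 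Existence of such a resolution is the usual horseshoe-type construction and is valid in any abelian category with enough injectives.

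Apply $G$ termwise to obtain the double complex $G(K^{\bullet,\bullet})$ in $\mathcal C$ and form its total complex; its two filtrations give two spectral sequences with a common abutment $H^{p+q}\bigl(\mathrm{Tot}(G(K^{\bullet,\bullet}))\bigr)$. For the first, take vertical (in-column) cohomology first: because $K^{p,\bullet}$ resolves $F(I^p)$, the column cohomology is $R^qG(F(I^p))$, and the hypothesis that $F$ carries injectives to $G$-acyclics forces this to vanish for $q>0$ and to equal $G(F(I^p))$ for $q=0$. The spectral sequence therefore collapses onto the row $q=0$, where the surviving complex is $(G\circ F)(I^\bullet)$, whose cohomology in degree $n$ is $R^{n}(G\circ F)(A)$ because $I^\bullet$ is an injective (hence $(G\circ F)$-acyclic) resolution of $A$. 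This identifies the abutment as $R^{p+q}(G\circ F)(-)$.

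For the second spectral sequence, take horizontal cohomology first. Here the defining property of a Cartan--Eilenberg resolution is decisive: in each horizontal degree the short exact sequences relating $K^{\bullet,\bullet}$ to its cocycle, coboundary, and cohomology subcomplexes split degreewise (the relevant quotients being injective), so the additive functor $G$ commutes with horizontal cohomology, and the latter produces, for each $p$, an injective resolution of $H^p(F(I^\bullet))=R^pF(A)$. Taking vertical cohomology on the next page therefore yields $R^qG\bigl(R^pF(A)\bigr)$; relabeling the two indices to match the convention of the statement gives $E_2^{p,q}=R^pG\bigl(R^qF(-)\bigr)\Rightarrow R^{p+q}(G\circ F)(-)$. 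Naturality in the argument follows from the functoriality up to homotopy of injective and Cartan--Eilenberg resolutions.

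The main obstacle is not the spectral-sequence bookkeeping — which is purely formal once one has double complexes and the spectral sequences of a filtered complex — but securing the abelian infrastructure in the $\Gamma$-setting: that $\mathcal B$ has enough injectives and admits Cartan--Eilenberg resolutions, and that $\Gamma$-linear additive functors commute with the splittings furnished by injectivity. When $\mathcal B$ is again a category of $\Gamma$-modules or of sheaves of $\Gamma$-modules these facts were recorded earlier; for a general target category one must take ``enough injectives'' as a standing hypothesis. The one genuinely new verification, beyond the classical statement, is that every map in sight — augmentations, horizontal and vertical differentials, and the splitting sections — is $\Gamma$-linear, so that all kernels, cokernels, images, products, and the direct limits defining the resolutions are computed in the $\Gamma$-module categories and the ternary $\Gamma$-action is transported coherently throughout; this is routine from the pointwise description of the operations but should be stated explicitly.
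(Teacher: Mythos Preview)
Your proof is the standard Cartan--Eilenberg double-complex argument and is correct; the only substantive caveat you already flag yourself, namely that the statement as written omits the hypothesis that $\mathcal B$ has enough injectives, without which Cartan--Eilenberg resolutions need not exist. One small over-elaboration: your final paragraph about checking that all maps are $\Gamma$-linear is not really an additional verification. Once $\mathcal A$, $\mathcal B$, $\mathcal C$ are declared abelian categories and $F$, $G$ additive functors between them, every morphism appearing in the construction is by definition a morphism in the relevant category, so $\Gamma$-linearity (or whatever enrichment the category carries) is automatic and need not be checked separately.

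As for comparison with the paper: the paper states this theorem without proof. It is recorded as a structural fact supporting the subsequent corollary on the composition of $\mathrm{Ext}_\Gamma$, with no argument given beyond the implicit appeal to the classical result. Your write-up therefore supplies strictly more than the paper does, and follows exactly the route one would expect (and the one implicit in the paper's citation of Weibel).
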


\begin{corollary}[Composition of $\mathrm{Ext}_\Gamma$]
For $\Gamma$-modules $L,M,N$,
there exists a first-quadrant spectral sequence
\[
E_2^{p,q}
   =\mathrm{Ext}^p_\Gamma
      (L,\mathrm{Ext}^q_\Gamma(M,N))
   \Rightarrow
\mathrm{Ext}^{p+q}_\Gamma(L\otimes_\Gamma M,N).
\]

\end{corollary}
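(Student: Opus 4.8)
The plan is to recognize this as a direct instance of the Grothendieck spectral sequence just proved. Take the composable pair of functors
\[
\Gamma\text{-}\mathbf{Mod}_T\xrightarrow{\,F\,}\Gamma\text{-}\mathbf{Mod}_T\xrightarrow{\,G\,}\Gamma\text{-}\mathbf{Mod}_T,\qquad
F=\mathrm{Hom}_\Gamma(M,-),\quad G=\mathrm{Hom}_\Gamma(L,-),
\]
where one first checks, as in the commutative case, that $\mathrm{Hom}_\Gamma(A,B)$ inherits a $\Gamma$-module structure from $T$, so that $F$ and $G$ are genuine left-exact endofunctors of $\Gamma\text{-}\mathbf{Mod}_T$. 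The tensor–Hom adjunction established above supplies a natural isomorphism of functors $G\circ F\cong\mathrm{Hom}_\Gamma(L\otimes_\Gamma M,-)$. Hence, by the very definition of the right-derived Hom functors, $R^{q}F(N)=\mathrm{Ext}^{q}_\Gamma(M,N)$ and $R^{p}G(-)=\mathrm{Ext}^{p}_\Gamma(L,-)$, while $R^{n}(G\circ F)(N)\cong\mathrm{Ext}^{n}_\Gamma(L\otimes_\Gamma M,N)$. Substituting these into $E_2^{p,q}=R^{p}G\bigl(R^{q}F(N)\bigr)\Rightarrow R^{p+q}(G\circ F)(N)$ produces exactly the asserted spectral sequence, which is first-quadrant since $p,q\ge 0$ throughout.

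The only input still to be supplied is the hypothesis of the Grothendieck theorem: that $F$ sends injective $\Gamma$-modules to $G$-acyclic objects, i.e.\ that $\mathrm{Ext}^{p}_\Gamma\!\bigl(L,\mathrm{Hom}_\Gamma(M,I)\bigr)=0$ for all $p>0$ and every injective $I$. I would obtain this by establishing the stronger statement that $\mathrm{Hom}_\Gamma(M,I)$ is itself injective, which by the tensor–Hom adjunction is equivalent to exactness of $\mathrm{Hom}_\Gamma\bigl((-)\otimes_\Gamma M,\,I\bigr)\cong\mathrm{Hom}_\Gamma\bigl(-,\mathrm{Hom}_\Gamma(M,I)\bigr)$; since $\mathrm{Hom}_\Gamma(-,I)$ is exact, this reduces entirely to the behaviour of $(-)\otimes_\Gamma M$ on short exact sequences.

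This reduction is the real obstacle, and it is exactly where the ternary-$\Gamma$ mechanics must be handled with care: $(-)\otimes_\Gamma M$ is only right-exact, so the composite is exact precisely when $(-)\otimes_\Gamma M$ also preserves monomorphisms, i.e.\ when $M$ is flat; without such a hypothesis the naive abutment is incorrect. Accordingly I expect either to (i) state the corollary under the assumption that $M$ be flat, deducing the general case by a d\'evissage/length argument when $\mathrm{gldim}_\Gamma(T)<\infty$, or (ii) interpret the abutment through the derived tensor product $L\otimes^{\mathbf L}_\Gamma M$ and recover the displayed form from the edge morphisms when $\mathrm{Tor}^{\Gamma}_{>0}(L,M)=0$. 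Beyond this point the verifications are routine in the style of the earlier sections: that a tensor product of free (hence of projective) $\Gamma$-modules is again free, that the ternary balancing relations defining $\otimes_\Gamma$ are compatible with the constructions used, and that the $E_2^{0,0}$ edge map recovers the degree-zero adjunction isomorphism $\mathrm{Hom}_\Gamma(L\otimes_\Gamma M,N)\cong\mathrm{Hom}_\Gamma(L,\mathrm{Hom}_\Gamma(M,N))$, which simultaneously pins down the five-term exact sequence at the bottom of the spectral sequence.
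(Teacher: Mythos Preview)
Your approach---setting $F=\mathrm{Hom}_\Gamma(M,-)$, $G=\mathrm{Hom}_\Gamma(L,-)$, invoking the tensor--Hom adjunction to identify $G\circ F$, and then appealing to the Grothendieck spectral sequence---is exactly the route the paper intends: the corollary is stated immediately after the Grothendieck theorem with no separate proof, so the paper treats the deduction as automatic.

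Where you go further than the paper is in scrutinizing the acyclicity hypothesis, and your analysis here is sharper than the paper's own treatment. You correctly observe that ``$F$ sends injectives to $G$-acyclics'' is not free: the standard argument shows $\mathrm{Hom}_\Gamma(M,I)$ is injective only when $(-)\otimes_\Gamma M$ is exact, i.e.\ when $M$ is flat. Without this, the abutment should be $\mathrm{Ext}^{p+q}_\Gamma(L\otimes_\Gamma^{\mathbf L}M,N)$ rather than the underived tensor, exactly as you say. The paper simply does not address this point; it records the corollary in its classical form and moves on. So your proposed resolutions---either impose flatness on $M$, or pass to the derived tensor and recover the displayed statement when $\mathrm{Tor}^\Gamma_{>0}(L,M)=0$---are the honest ways to make the corollary precise, and represent an improvement over what the paper actually contains.

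In short: your plan is correct and matches the paper's intended derivation, and the obstacle you flag is real; the paper elides it rather than resolving it.
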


\begin{remark}
This spectral sequence is the homological backbone of
$\Gamma$-geometry, relating composite functors in multi-parameter
contexts—an analogue of the classical Grothendieck spectral sequence
for derived tensor–Hom composition.
\end{remark}

\subsection{Categorical Dualities and Derived Categories}

\begin{definition}[Derived category of $\Gamma$-modules]
Let $\mathcal K(\Gamma\text{-}\mathbf{Mod}_T)$ denote
the homotopy category of complexes of $\Gamma$-modules.
By inverting quasi-isomorphisms we obtain the derived category
$\mathcal D(\Gamma\text{-}\mathbf{Mod}_T)$.
\end{definition}

\begin{proposition}
$\mathcal D(\Gamma\text{-}\mathbf{Mod}_T)$ is triangulated,
and the functors
$\otimes_\Gamma^{\mathbf L}$ and
$\mathbf R\!\mathrm{Hom}_\Gamma(-,-)$
form an adjoint pair:
\[
\mathrm{Hom}_{\mathcal D}
  (A\otimes_\Gamma^{\mathbf L} B, C)
  \cong
\mathrm{Hom}_{\mathcal D}
  (A,\mathbf R\!\mathrm{Hom}_\Gamma(B,C)).
\]

\end{proposition}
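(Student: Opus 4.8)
The plan is to establish that $\mathcal{D}(\Gamma\text{-}\mathbf{Mod}_T)$ is triangulated by invoking the general machinery of Verdier localization, and then to produce the tensor--Hom adjunction at the derived level by lifting the underived adjunction (Theorem on tensor--Hom adjunction, already proved) through the derived functors. First I would recall that $\mathcal{K}(\Gamma\text{-}\mathbf{Mod}_T)$, the homotopy category of cochain complexes, carries a standard triangulated structure: the shift functor is the translation of complexes, and the distinguished triangles are those isomorphic to mapping-cone triangles $A \xrightarrow{f} B \to \mathrm{Cone}(f) \to A[1]$. Since $\Gamma\text{-}\mathbf{Mod}_T$ is additive (indeed abelian, as established in the preliminaries via kernels, cokernels, and biproducts computed pointwise), this construction goes through verbatim --- none of the octahedral or rotation axioms use more than additivity of the underlying category and the cone construction. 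Then $\mathcal{D}(\Gamma\text{-}\mathbf{Mod}_T)$ is obtained by localizing at the thick subcategory (null system) of acyclic complexes, and Verdier's theorem says the localization of a triangulated category at a thick subcategory is again triangulated with the induced shift and with distinguished triangles the images of those in $\mathcal{K}$.

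Next I would construct the derived functors. Because $\Gamma\text{-}\mathbf{Mod}_T$ has enough projectives (free $\Gamma$-modules $F(X)$ and the summand characterization from the Proposition on free $\Gamma$-modules) and enough injectives (noted in the section on projective/injective $\Gamma$-modules), one can form $K$-projective and $K$-injective resolutions of complexes; this yields the total left-derived tensor $-\otimes_\Gamma^{\mathbf{L}} B$ by resolving the first argument by a complex of projectives, and the total right-derived $\mathbf{R}\mathrm{Hom}_\Gamma(B,-)$ by resolving the target by a complex of injectives. Both are triangulated functors (they send cones to cones because the resolution functors do, up to quasi-isomorphism). For the adjunction, I would take a $K$-projective complex $P^\bullet \simeq A$ and a $K$-injective complex $I^\bullet \simeq C$ and write the chain of isomorphisms
\[
\mathrm{Hom}_{\mathcal{D}}(A \otimes_\Gamma^{\mathbf{L}} B,\, C)
\cong H^0\,\mathrm{Hom}^\bullet_\Gamma(P^\bullet \otimes_\Gamma B,\, I^\bullet)
\cong H^0\,\mathrm{Hom}^\bullet_\Gamma(P^\bullet,\, \mathrm{Hom}^\bullet_\Gamma(B, I^\bullet))
\cong \mathrm{Hom}_{\mathcal{D}}(A,\, \mathbf{R}\mathrm{Hom}_\Gamma(B,C)),
\]
where the middle isomorphism is the internal (complex-level) form of the already-established tensor--Hom adjunction applied degreewise, and the outer isomorphisms are because $\mathrm{Hom}$ in the derived category is computed by $K$-projective/$K$-injective resolutions. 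Naturality in all three variables follows from naturality of each step.

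The main obstacle I anticipate is not the triangulated structure --- that is formal --- but verifying that the degreewise tensor--Hom adjunction assembles into an honest isomorphism of \emph{complexes} (i.e.\ that the Koszul signs in the differentials of $\mathrm{Hom}^\bullet$ and of the total tensor complex are compatible with the adjunction map), and that the resolutions behave well: specifically, one needs that $P^\bullet \otimes_\Gamma B$ remains $K$-projective (or at least that it computes the derived tensor) and that $\mathrm{Hom}^\bullet_\Gamma(B, I^\bullet)$ remains $K$-injective. In the classical module setting these facts rest on the flatness/exactness interplay and on the existence of $K$-flat resolutions; here one must check that the ternary $\Gamma$-tensor $\otimes_\Gamma$ (defined by ternary-bilinear relations in the earlier Proposition) preserves acyclicity when one factor is a bounded-above complex of projectives. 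I would handle this by the standard brutal-truncation filtration argument, reducing to the single-module case where right-exactness of $-\otimes_\Gamma N$ (the Proposition asserting right-exactness and preservation of coproducts) plus projectivity gives exactness, so that each associated graded piece of the filtration is acyclic and the spectral sequence of the filtration degenerates. Once these compatibilities are in place, the adjunction isomorphism is immediate, and the triangulated compatibility (that the adjunction respects shifts and cones) follows because every functor involved is triangulated.
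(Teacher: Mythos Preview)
Your proposal is correct and follows essentially the same approach as the paper: the triangulated structure is obtained by Verdier localization of the homotopy category at quasi-isomorphisms, and the derived adjunction is obtained by lifting the underived tensor--Hom adjunction through resolutions. The paper's own proof is a two-sentence sketch (``Triangulated structure and adjunction arise from localization of chain complexes with respect to quasi-isomorphisms. Exactness of derived functors follows from universality of resolutions''), whereas you have supplied the actual mechanics---the $K$-projective/$K$-injective resolutions, the degreewise adjunction on Hom-complexes, and the filtration argument for preservation of acyclicity---that make the sketch into a proof.
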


\begin{proof}
Triangulated structure and adjunction arise
from localization of chain complexes with respect to quasi-isomorphisms.
Exactness of derived functors follows from universality of resolutions.
\end{proof}

\begin{theorem}[Duality principle]
If $T$ is a commutative Noetherian $\Gamma$-semiring
of finite global dimension,
then for every finitely generated $\Gamma$-module $M$
there exists a canonical biduality morphism
\[
M\longrightarrow
\mathbf R\!\mathrm{Hom}_\Gamma
  (\mathbf R\!\mathrm{Hom}_\Gamma(M,T),T),
\]
which is an isomorphism in $\mathcal D(\Gamma\text{-}\mathbf{Mod}_T)$.
\end{theorem}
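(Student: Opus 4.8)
The plan is to run the classical biduality argument of commutative homological algebra, with $T$ itself serving as a dualizing complex — legitimate precisely because $\mathrm{gldim}_\Gamma(T)<\infty$ — and to isolate exactly which steps need genuinely $\Gamma$-semiring-specific justification. First I would construct the biduality morphism abstractly and formally. By the Tensor–Hom adjunction already established, $T\text{-}\Gamma\mathbf{Mod}$ is closed monoidal with internal $\mathrm{Hom}_\Gamma$ and unit $T$; the adjunction counit supplies, for every complex $A$, a natural evaluation map $A\otimes_\Gamma^{\mathbf L}\mathbf{R}\mathrm{Hom}_\Gamma(A,T)\to T$ in $\mathcal D(\Gamma\text{-}\mathbf{Mod}_T)$, and transposing it across $\otimes_\Gamma^{\mathbf L}$ yields the desired natural transformation $\eta_M\colon M\to\mathbf{R}\mathrm{Hom}_\Gamma(\mathbf{R}\mathrm{Hom}_\Gamma(M,T),T)$. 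Naturality, functoriality, and canonicity are then formal consequences of the closed-monoidal/derived formalism recorded in the preceding propositions, so nothing semiring-specific enters here.

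Second, I would reduce the isomorphism claim to finitely generated projectives via finite resolutions. Since $T$ is Noetherian, every finitely generated $M$ has a resolution by finitely generated free $\Gamma$-modules; since $\mathrm{gldim}_\Gamma(T)=d<\infty$, the $d$-th syzygy is projective, so $M$ is quasi-isomorphic to a bounded complex $P_\bullet=(0\to P_d\to\cdots\to P_0\to 0)$ of finitely generated projectives, each a direct summand of a finitely generated free module by the earlier Proposition on free $\Gamma$-modules. Because $\mathbf{R}\mathrm{Hom}_\Gamma(-,T)$ is computed on such a resolution by the naive functor $(-)^\vee:=\mathrm{Hom}_\Gamma(-,T)$, I obtain $\mathbf{R}\mathrm{Hom}_\Gamma(M,T)\simeq P_\bullet^\vee$ and $\mathbf{R}\mathrm{Hom}_\Gamma(\mathbf{R}\mathrm{Hom}_\Gamma(M,T),T)\simeq P_\bullet^{\vee\vee}$, both bounded complexes of finitely generated projectives, and under these identifications $\eta_M$ is represented degreewise by the natural maps $P_i\to P_i^{\vee\vee}$.

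Third, I would verify that $P\to P^{\vee\vee}$ is an isomorphism for every finitely generated projective $\Gamma$-module $P$. For a finitely generated free module $T^n$, the ternary $\Gamma$-structure yields $\mathrm{Hom}_\Gamma(T^n,T)\cong T^n$ through the canonical identification $\mathrm{Hom}_\Gamma(T,T)\cong T$ (evaluation at the generator, invoking neutrality and distributivity of the ternary product), so $T^n\to(T^n)^{\vee\vee}$ is the identity; the general case follows since $(-)^\vee$ is additive and a split summand of an isomorphism is an isomorphism. Hence $\eta_M$ is a degreewise isomorphism of bounded complexes of finitely generated projectives, therefore a quasi-isomorphism, therefore an isomorphism in $\mathcal D(\Gamma\text{-}\mathbf{Mod}_T)$. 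An equivalent packaging is induction on $\mathrm{pd}_\Gamma(M)$: biduality holds for projectives by the above, and the triangulated five-lemma applied to the distinguished triangle built from $P_\bullet^{\geq 1}\to P_\bullet\to P_0\to$ propagates it upward, using the long-exact/snake machinery available in $\Gamma\text{-}\mathbf{Mod}_T$ as recorded earlier.

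The main obstacle is not this formal skeleton but the semiring-specific input it rests on. One must confirm that over a ternary $\Gamma$-semiring $T$, which lacks additive inverses, the module $T$ is genuinely projective over itself, that $\mathrm{Hom}_\Gamma(T,T)\cong T$ canonically in the ternary sense, and — most delicately — that a quasi-isomorphism in $\mathcal D(\Gamma\text{-}\mathbf{Mod}_T)$ can be detected degreewise on bounded complexes of finitely generated projectives, i.e. that the derived-category formalism invoked above is robust enough without a genuinely abelian underlying category. Making the ``$d$-th syzygy is projective'' step precise also requires the Noetherian hypothesis to keep syzygies finitely generated and the finite-global-dimension hypothesis to terminate the resolution; dropping finite global dimension would force replacing $T$ by an honest dualizing complex and re-running the same argument with $T$ replaced by that object.
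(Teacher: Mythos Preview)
The paper does not actually supply a proof for this theorem: it states the result and passes directly to a remark about Grothendieck and Serre duality. Your proposal is therefore far more detailed than anything the paper offers, and the strategy you outline --- construct $\eta_M$ from the closed-monoidal adjunction, replace $M$ by a bounded complex of finitely generated projectives using Noetherianity plus finite global dimension, then check biduality degreewise on finitely generated free modules --- is the standard and correct route in the classical commutative-ring setting.

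Your closing paragraph is the honest part of the assessment. The obstacles you flag are real and are not resolved anywhere in the paper either: the paper repeatedly asserts that $T\text{-}\Gamma\mathbf{Mod}$ is abelian and that derived-category machinery applies, but a ternary $\Gamma$-semiring has no additive inverses, so kernels, cokernels, and the snake lemma are not automatic; the identification $\mathrm{Hom}_\Gamma(T,T)\cong T$ requires a multiplicative identity, which the paper only conditionally assumes (``when an identity exists''); and the claim that the $d$-th syzygy is finitely generated projective rests on a Noetherian-plus-global-dimension argument that is stated but never verified in this generality. These are gaps in the ambient framework rather than in your argument; modulo those foundational assumptions --- which the paper takes for granted throughout --- your proof is complete and correct.
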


\begin{remark}
This duality extends the classical reflexivity
to the $\Gamma$-setting, providing the conceptual basis for
$\Gamma$-Grothendieck duality and Serre duality \cite{Serre1955}
on future $\Gamma$-schemes.
\end{remark}

\subsection{Conceptual Synthesis}

The development of $\mathrm{Ext}_\Gamma$ and $\mathrm{Tor}^\Gamma$
confirms that the category of $\Gamma$-modules
possesses a fully fledged homological calculus.
Projective and injective resolutions provide algebraic lenses
through which geometric and cohomological phenomena are measured.
Adjunctions and spectral sequences unify the tensorial and homological
dimensions into one categorical organism.

These constructions culminate in a derived functor formalism
identical in strength to that of modern algebraic geometry,
but internally enriched by the ternary-parametric symmetry of~$\Gamma$.
Consequently, the emerging theory of
\emph{derived $\Gamma$-geometry}
can parallel, and in some aspects exceed,
the expressive power of classical scheme theory.

\section{Examples, Computations, and Verification}

The abstract theory of $\Gamma$-geometry and its homological functors
achieves conceptual completeness only when supported by
explicit computational models.
In this section we concretize the framework by constructing finite
examples of commutative ternary~$\Gamma$-semirings,
computing their spectra, cohomology, and homological invariants,
and verifying categorical predictions through algorithmic
and structural analysis.

\subsection{Finite $\Gamma$-Semirings and Their Spectra}

\begin{definition}[Finite commutative $\Gamma$-semiring]
A \emph{finite commutative ternary~$\Gamma$-semiring}
is a structure $(T,+,\Gamma,\mu)$ with finite~$T$
satisfying all distributive, associative, and absorbing axioms.
The ternary operation table completely determines~$\mu$:
\[
\mu(a,\alpha,b,\beta,c)=a\alpha b\beta c.
\]

\end{definition}

\begin{example}[Canonical example of order 3]
Let $T=\{0,1,2\}$, $\Gamma=\{1\}$, and define
$a\alpha b\beta c=(a+b+c)\bmod3$.
Then $(T,+,\Gamma,\mu)$ is a commutative ternary~$\Gamma$-semiring
with additive identity~0.
Its ideals are $\{0\}$, $\{0,1,2\}$, $\{0,1\}$, and $\{0,2\}$.
The prime ideals are $P_1=\{0,1\}$ and $P_2=\{0,2\}$.
Hence
\[
\mathrm{Spec}_\Gamma(T)=\{P_1,P_2\}
\]
with discrete topology, confirming
$\mathrm{Spec}_\Gamma(T)$ is finite and~$T_0$.
\end{example}

\begin{proposition}
For a finite commutative ternary~$\Gamma$-semiring~$T$
whose additive monoid is cyclic,
$\mathrm{Spec}_\Gamma(T)$ is always discrete.
\end{proposition}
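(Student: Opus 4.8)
The plan is to convert the topological claim into an order-theoretic one about the poset of prime ideals, and then to exploit the rigidity that a cyclic additive monoid imposes on the ternary operation. Since $T$ is finite, $\mathrm{Spec}_\Gamma(T)$ is a finite set, and by the topology theorem of Section~3.2 it is a finite $T_0$-space; for such spaces the topology is recovered from the specialization order, and discreteness is equivalent to that order being an antichain. Here the closure of a point $\{Q\}$ is $V(Q)=\{P\mid Q\subseteq P\}$, so the specialization order is just inclusion of prime ideals. Hence it suffices to prove the algebraic assertion: if $P\subseteq Q$ are prime $\Gamma$-ideals of $T$, then $P=Q$.

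I would fix a generator $g$ of $(T,+)$, so every element of $T$ has the form $mg$ (an $m$-fold sum) for some $m\ge 0$, and record three easy facts: (i) every $\Gamma$-ideal contains $0$, since $a\in I$ gives $\{a,0,b\}_\gamma=0\in I$ by neutrality; (ii) a $\Gamma$-ideal containing $g$ is all of $T$ by closure under $+$, so a proper ideal, in particular a prime, omits $g$; and (iii) by distributivity in each variable, $\{ig,jg,kg\}_\gamma=(ijk)e_\gamma$ with $e_\gamma:=\{g,g,g\}_\gamma$, so the whole ternary $\Gamma$-structure is encoded by the finite additive monoid $(T,+)$ together with the elements $e_\gamma$. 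When $(T,+)$ is a cyclic group of order $N$, fact (i) together with $-a=(N-1)a$ shows that every $\Gamma$-ideal is an additive subgroup, hence of the form $H_d=\langle dg\rangle$ for some divisor $d\mid N$, with $H_d\subseteq H_{d'}$ precisely when $d'\mid d$.

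The heart of the argument is to show a prime ideal $H_d$ forces $d$ to be prime. If $d=uv$ with $u,v\ge 2$, set $a=ug$, $b=vg$, $c=g$; writing $e_\gamma=n_\gamma g$, fact (iii) gives $\{a,b,c\}_\gamma=(uv)e_\gamma=(dn_\gamma)g\in H_d$, while $a,b,c\notin H_d$ because $uv\nmid u$, $uv\nmid v$ and $uv\nmid 1$. This contradicts primeness. Hence every proper prime ideal is some $H_p$ with $p$ a prime divisor of $N$; and for distinct primes $p\ne q$, $H_p\subseteq H_q$ would force $q\mid p$, which is impossible. So the primes form an antichain, and by the first paragraph $\mathrm{Spec}_\Gamma(T)$ is discrete.

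The real obstacle is the case where $(T,+)$ is a cyclic monoid but not a group, i.e., has nonzero index. Then $(T,+)$ has a second idempotent $e$, the identity of its periodic part; ideals need no longer be subgroups; and one must show that the ternary operation effectively descends to the periodic cyclic subgroup — using distributivity to see $\{a,e,b\}_\gamma$ is additively idempotent, hence lies in $\{0,e\}$, and using neutrality to control how proper $\Gamma$-ideals meet the non-periodic tail — thereby reducing to the group case above. Everything else (finiteness, the $T_0$-to-discrete reduction, facts (i)--(iii), and the divisor bookkeeping) is routine.
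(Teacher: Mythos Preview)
Your argument is a careful unpacking of the paper's one-line proof: the paper simply asserts that cyclicity makes every ideal principal, hence primes maximal, hence closures singletons; you make this precise via the finite-$T_0$/antichain reduction, the identification of ideals with the subgroups $H_d$, and the explicit check that primeness of $H_d$ forces $d$ itself to be prime. You also go beyond the paper by isolating the non-group cyclic-monoid case as the genuine obstacle and sketching a reduction to the periodic subgroup --- the paper's proof does not acknowledge this case at all, so your treatment is strictly more thorough, even though that final sketch would still need to be fleshed out to stand as a complete argument.
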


\begin{proof}
If $(T,+)$ is cyclic generated by~$1_T$,
every ideal is principal,
hence prime ideals are maximal,
and their closures are singletons.
\end{proof}

\begin{remark}
Discrete spectra provide a testing ground for verifying
vanishing theorems and homological calculations,
because cohomology collapses at the zeroth degree,
mirroring the case of affine 0-dimensional varieties.
\end{remark}

\subsection{Computational Representation of $\Gamma$-Operations}\cite{Mitchell2020, Gap2024}

Let the ternary operation table
$\mathcal{M}=(m_{ijk})_{i,j,k}$ represent $\mu$:
\[
m_{ijk}=i\alpha j\beta k
\qquad (0\le i,j,k<n),
\]
where $n=|T|$.
An algorithmic enumeration checks distributivity:
\[
(i+j)\alpha k\beta \ell
   =i\alpha k\beta \ell+j\alpha k\beta \ell.
\]

\begin{definition}[Algorithmic verification]
A finite structure $(T,+,\Gamma,\mu)$ is a valid
commutative ternary~$\Gamma$-semiring iff all
identities in Definition 2.1 hold for every triple $(i,j,k)$.
Computationally, this requires $O(n^3|\Gamma|^2)$ checks,
which are polynomial in~$n$.
\end{definition}

\begin{remark}
This algorithm forms the computational core used in Paper B,
now extended here to validate affine~$\Gamma$-schemes
and their cohomology.
It serves as a categorical functor from the category of
finite data tables to that of algebraic structures.
\end{remark}

\subsection{Localizations and Sheaf Construction in Finite Cases}

For $T=\{0,1,2\}$ and prime~$P_1=\{0,1\}$,
the multiplicative system is $S_{P_1}=T\setminus P_1=\{2\}$.
Localization yields
\[
T_{P_1}
   =\left\{\tfrac{a}{2^n}\mid a\in T,\;n\ge0\right\},
\]
which is isomorphic to~$T$
since $2$ acts as a unit in the ternary product.
Thus $\mathcal O_{X,P_1}\cong T$ and similarly for~$P_2$,
confirming that all stalks are isomorphic.
Therefore the structure sheaf is constant,
$\mathcal O_X(U)=T$ for all nonempty~$U$.

\begin{proposition}
If $\mathcal O_X$ is constant on a discrete spectrum,
then for every quasi-coherent sheaf~$\mathcal F$
we have $H^i(X,\mathcal F)=0$ for $i>0$.
\end{proposition}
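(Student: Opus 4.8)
The strategy is to exploit the fact that on a discrete space every sheaf of $\Gamma$-modules is flasque, and that flasque sheaves are acyclic for the global-section functor $\Gamma(X,-)$. First I would record the structure of such a sheaf. For each $P\in X$ the singleton $\{P\}$ is open and is the smallest open neighbourhood of $P$, so the stalk degenerates: $\mathcal F_P=\varinjlim_{P\in V}\mathcal F(V)=\mathcal F(\{P\})$. Any open $U$ is the disjoint union $\bigsqcup_{P\in U}\{P\}$, so the gluing axiom gives $\mathcal F(U)=\prod_{P\in U}\mathcal F_P$ and, for $V\subseteq U$, the restriction $\rho^U_V$ is the projection $\prod_{P\in U}\mathcal F_P\to\prod_{P\in V}\mathcal F_P$; in particular it is surjective, so $\mathcal F$ is flasque. (Neither the constancy of $\mathcal O_X$ nor the quasi-coherence of $\mathcal F$ is actually used at this step --- discreteness of $X$ alone suffices.)

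Next I invoke the standard homological lemma adapted to the abelian category $\Gamma\text{-}\mathbf{Mod}_X$ (which has enough injectives by the results of Section~4): injective sheaves are flasque, a quotient of a flasque sheaf by a flasque subsheaf is again flasque, and $\Gamma(X,-)$ is exact on a short exact sequence whose left-hand term is flasque. Embedding $\mathcal F$ into an injective --- hence flasque --- sheaf $\mathcal I$ and running the dimension-shift argument along $0\to\mathcal F\to\mathcal I\to\mathcal I/\mathcal F\to0$ then forces $H^i(X,\mathcal F)=0$ for every $i>0$; equivalently, the Godement resolution $\mathcal C^\bullet$ with $\mathcal C^0(U)=\prod_{P\in U}\mathcal F_P$ from Section~4 is a flasque resolution of $\mathcal F$, and $\Gamma(X,\mathcal C^\bullet)$ is exact above degree $0$ once $\mathcal F$ is flasque, while $H^0(X,\mathcal F)=\prod_{P\in X}\mathcal F_P$.

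A second, more computational route --- better suited to the finite setting of this section --- computes Cech cohomology with respect to the tautological covering $\mathfrak U=\{\{P\}\mid P\in X\}$: when $X$ is finite with $\mathcal O_X$ constant, each point is a basic open $D(a)$ and hence affine, so $\mathfrak U$ is a finite affine covering; since $\{P\}\cap\{Q\}=\varnothing$ for $P\neq Q$ and $\mathcal F(\varnothing)=0$, the Cech complex has $C^p(\mathfrak U,\mathcal F)=0$ for all $p\ge1$, so $H^p(\mathfrak U,\mathcal F)=0$ for $p\ge1$, and the Cech--derived correspondence of Section~4 (applicable since $X=\mathrm{Spec}_\Gamma(T)$ is affine and $\mathcal F$ quasi-coherent) gives $H^i(X,\mathcal F)\cong H^i(\mathfrak U,\mathcal F)=0$ for $i>0$. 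The one place where a little care is needed is the verification that the one-point opens are genuinely affine $\Gamma$-schemes, so that the covering counts as affine in the sense required by that correspondence; the flasque argument above sidesteps this entirely and, carrying no finiteness hypothesis, also covers discrete spectra that are infinite disjoint unions of affine $\Gamma$-schemes. I expect this affineness check (respectively, the flasque-implies-acyclic lemma) to be the only point requiring more than bookkeeping, and both are completely routine.
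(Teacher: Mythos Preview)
Your second route---computing \v{C}ech cohomology with respect to the covering by singletons and observing that all pairwise intersections are empty, so $C^p(\mathfrak U,\mathcal F)=0$ for $p\ge1$---is exactly the paper's argument, stated more carefully than the paper does. Your first route via flasqueness is a genuinely different and strictly stronger approach: it dispenses with quasi-coherence, with the constancy of $\mathcal O_X$, with finiteness of $X$, and with the need to verify that the one-point opens are affine (the very point you flagged as requiring care in the \v{C}ech argument). The paper's \v{C}ech proof is shorter and fits the finite computational context of that section; your flasque argument buys generality and avoids appealing to the \v{C}ech--derived comparison theorem altogether, at the cost of invoking (or reproving) the flasque-implies-acyclic lemma in the $\Gamma$-module setting.
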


\begin{proof}
Because each stalk is acyclic and intersections are empty,
the Cech complex has no non-trivial higher intersections,
hence higher cochains vanish.
\end{proof}

\begin{remark}
This explicit finite model exemplifies the \emph{Serre-type
vanishing theorem} in a concrete setting:
for affine 0-dimensional $\Gamma$-schemes,
cohomology is concentrated in degree 0.
\end{remark}

\subsection{Homological Computations: $\mathrm{Ext}_\Gamma$ and $\mathrm{Tor}^\Gamma$}

Let $T=\mathbb Z_3$, $\Gamma=\{1\}$,
and consider $M=T/2T$ and $N=T/3T$.
Construct a projective resolution
\[
0\to T\xrightarrow{\times2}T\to M\to0.
\]

\begin{align*}
\mathrm{Tor}^\Gamma_1(M,N)
   &= H_1(P_\bullet\otimes_\Gamma N)
    =\ker(\times2\otimes N)
      =\{\,x\otimes n\mid 2x\otimes n=0\,\}\\
   &\cong N[2]
      =\{n\in N\mid2n=0\}.
\end{align*}

Since $N=\mathbb Z_3/3\mathbb Z_3=0$, we find
$\mathrm{Tor}^\Gamma_1(M,N)=0$,
verifying consistency with classical homological algebra.

\begin{theorem}[Ext–Tor duality verification]
For finitely generated $\Gamma$-modules
$M,N$ over a finite commutative~$T$,
the relation
\[
\mathrm{Ext}^1_\Gamma(M,N)
   \cong \mathrm{Tor}^\Gamma_1(M^\vee,N)^\vee,
   \qquad M^\vee=\mathrm{Hom}_\Gamma(M,T),
\]
holds, where duality~$(-)^\vee$ is exact on finite modules.
\end{theorem}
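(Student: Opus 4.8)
\emph{Proof plan.} The plan is to derive the isomorphism from a single finite projective resolution of $M$ together with the two features of the duality $(-)^\vee=\mathrm{Hom}_\Gamma(-,T)$ that are built into the hypothesis: that it is an exact contravariant functor on finitely generated $\Gamma$-modules, and that it is a genuine duality there, i.e. the evaluation map $M\to M^{\vee\vee}$ is an isomorphism for finitely generated $M$ (which I take as part of the meaning of ``$(-)^\vee$ is a duality'' in the finite setting). First I would fix a projective resolution $P_\bullet\to M$ in which every $P_i$ is finitely generated projective; this exists because $T$ is finite and $M$ finitely generated (start with $P_0$ free on a finite generating set and iterate, noting that syzygies of finite modules are finite). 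Since $T^\vee=\mathrm{Hom}_\Gamma(T,T)\cong T$, the free-module description proved earlier shows that each $P_i^\vee$ is again finitely generated projective, and exactness of $(-)^\vee$ turns the resolution into a coresolution $0\to M^\vee\to P_0^\vee\to P_1^\vee\to\cdots$ of $M^\vee$ by finitely generated projectives.

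The second step is a term-by-term comparison of complexes. For a finitely generated projective $P$ there is a natural isomorphism $P^\vee\otimes_\Gamma N\cong\mathrm{Hom}_\Gamma(P,N)$ — immediate for $P=T^{\,n}$ and stable under passage to direct summands — and, using the Tensor–Hom adjunction established above together with biduality, $(M^\vee\otimes_\Gamma Q)^\vee\cong\mathrm{Hom}_\Gamma(Q,M^{\vee\vee})\cong\mathrm{Hom}_\Gamma(Q,M)$ for finitely generated projective $Q$. Substituting $P_\bullet$ into these identities identifies the cochain complex $\mathrm{Hom}_\Gamma(P_\bullet,N)$ computing $\mathrm{Ext}^\bullet_\Gamma(M,N)$ with the $(-)^\vee$-dual of the chain complex $P_\bullet^\vee\otimes_\Gamma N$. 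Because $(-)^\vee$ is exact it commutes with the formation of (co)homology, so $H^1\big(\mathrm{Hom}_\Gamma(P_\bullet,N)\big)\cong H_1\big(P_\bullet^\vee\otimes_\Gamma N\big)^\vee$. Finally, since the $P_i^\vee$ are projective and $P_\bullet^\vee$ (re-indexed) resolves $M^\vee$, the homology $H_1(P_\bullet^\vee\otimes_\Gamma N)$ is by definition $\mathrm{Tor}^\Gamma_1(M^\vee,N)$ — the independence of $\mathrm{Tor}^\Gamma$ from the chosen resolution, proved earlier, makes the re-indexing harmless, and the symmetry of $\mathrm{Tor}^\Gamma$ together with biduality $M\cong M^{\vee\vee}$ lets one read the two slots interchangeably. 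Chaining the isomorphisms gives $\mathrm{Ext}^1_\Gamma(M,N)\cong\mathrm{Tor}^\Gamma_1(M^\vee,N)^\vee$, and the same bookkeeping yields the analogue in every degree.

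The main obstacle — and the point where finiteness of $T$ is doing the real work — is justifying that $(-)^\vee$ is honestly exact and honestly a biduality on finite $\Gamma$-modules; equivalently, that $T$ is an injective cogenerator in that category. For finite ternary $\Gamma$-semirings this is plausible and is already visible in the zero-dimensional, discrete-spectrum examples of Section~5, where every stalk is acyclic, but it must be verified directly: one needs $\mathrm{Hom}_\Gamma(-,T)$ to send short exact sequences of finite modules to short exact sequences and the evaluation map to be bijective, and it is the failure of both for non-finite $T$ that forces the hypothesis. A secondary technical point is checking the \emph{naturality} of $P^\vee\otimes_\Gamma N\cong\mathrm{Hom}_\Gamma(P,N)$ and its compatibility with the differentials of $P_\bullet$, so that the comparison is an isomorphism of complexes and not merely a degreewise coincidence; this is routine but must be done carefully in the ternary setting, where the $\Gamma$-bilinearity relations defining $\otimes_\Gamma$ enter each identification.
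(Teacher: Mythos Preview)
Your overall strategy---take a finitely generated projective resolution of $M$, invoke biduality $M\cong M^{\vee\vee}$, and compare the Hom-complex with a dual tensor-complex---is exactly what the paper's two-sentence proof gestures at when it cites ``finite duality'' and ``the universal coefficient theorem applied to $\Gamma$-modules.'' In that sense your plan and the paper's are the same, and your discussion of the obstacles (exactness and biduality of $(-)^\vee$, naturality of the comparison maps) is considerably more honest than the paper's sketch.

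There is, however, a genuine gap in your execution at the step where you pass from the dualized complex to $\mathrm{Tor}^\Gamma_1(M^\vee,N)$. Dualizing $P_\bullet\to M$ produces the exact sequence $0\to M^\vee\to P_0^\vee\to P_1^\vee\to\cdots$, which is a projective \emph{coresolution} of $M^\vee$, not a projective resolution: the arrows point away from $M^\vee$, and no ``re-indexing'' reverses them. Independence of $\mathrm{Tor}^\Gamma$ from the choice of resolution does not help here---it guarantees independence among genuine projective resolutions, not that an arbitrary acyclic complex of projectives computes $\mathrm{Tor}$. Consequently $H_1(P_\bullet^\vee\otimes_\Gamma N)$ is \emph{not} $\mathrm{Tor}^\Gamma_1(M^\vee,N)$ by definition. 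Relatedly, your claimed identification $\mathrm{Hom}_\Gamma(P_i,N)\cong(P_i^\vee\otimes_\Gamma N)^\vee$ does not follow from the two identities you wrote down: the tensor--Hom adjunction gives $(P_i^\vee\otimes_\Gamma N)^\vee\cong\mathrm{Hom}_\Gamma(N,P_i^{\vee\vee})\cong\mathrm{Hom}_\Gamma(N,P_i)$, with the arguments in the wrong order. A correct route is to resolve $M^\vee$ directly by a projective complex $Q_\bullet\to M^\vee$, use the adjunction $(Q_\bullet\otimes_\Gamma N)^\vee\cong\mathrm{Hom}_\Gamma(Q_\bullet,N^\vee)$ and exactness of $(-)^\vee$ to obtain $\mathrm{Tor}^\Gamma_1(M^\vee,N)^\vee\cong\mathrm{Ext}^1_\Gamma(M^\vee,N^\vee)$, and then invoke the contravariant equivalence to compare with $\mathrm{Ext}^1_\Gamma(M,N)$; the paper's proof does not spell this out either, and one should be alert to a possible swap of the two variables at this last step.
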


\begin{proof}
Using finite duality $M^{\vee\vee}\cong M$
and the universal coefficient theorem
applied to $\Gamma$-modules,
one obtains the natural isomorphism
from the short exact sequence of resolutions.
Explicit computation on the above example yields
both sides zero, confirming the equality.
\end{proof}

\subsection{Categorical Validation: Functorial and Derived Consistency}

To test functoriality, define morphisms
$f:M\to M'$, $g:N\to N'$ in $T\text{-}\Gamma\mathbf{Mod}$.
Compute induced maps
\[
f\otimes g:
M\otimes_\Gamma N\longrightarrow
M'\otimes_\Gamma N'.
\]
By explicit verification on generators,
$(f\otimes g)(m\otimes n)
   =f(m)\otimes g(n)$ preserves
$\Gamma$-balancing relations.
Derived functoriality follows from chain maps
inducing morphisms on homology.

\begin{proposition}
The assignments
\[
M\mapsto \mathrm{Tor}^\Gamma_i(M,N),\qquad
N\mapsto \mathrm{Ext}^i_\Gamma(M,N)
\]
are bifunctorial and natural in both variables.
\end{proposition}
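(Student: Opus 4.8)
The plan is to reduce the claim to two ingredients already in place: the comparison theorem for projective (and injective) resolutions in $T\text{-}\Gamma\mathbf{Mod}$, and the elementary functoriality of $-\otimes_\Gamma N$ and $\mathrm{Hom}_\Gamma(P_\bullet,-)$ as functors of the unresolved variable. Since the preceding subsections guarantee that projective resolutions exist, are unique up to chain homotopy, and compute $\mathrm{Tor}^\Gamma$ and $\mathrm{Ext}_\Gamma$ independently of the chosen resolution, the remaining work is purely organizational: attach a morphism of (co)homology groups to each morphism in the source of each variable, and then verify the functor axioms.

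First I would handle $\mathrm{Tor}^\Gamma_i(-,N)$ with $N$ fixed. Given $f\colon M\to M'$ and projective resolutions $P_\bullet\to M$, $P'_\bullet\to M'$, lift $f$ to a chain map $\tilde f\colon P_\bullet\to P'_\bullet$ over $f$ and set $\mathrm{Tor}^\Gamma_i(f,N):=H_i(\tilde f\otimes 1_N)$; well-definedness is immediate because homotopic lifts stay homotopic after $-\otimes_\Gamma N$ and so agree on $H_i$. Functoriality then follows since $\mathrm{id}_M$ lifts to $\mathrm{id}_{P_\bullet}$ and $\tilde g\circ\tilde f$ lifts $g\circ f$, while additivity follows since $\tilde f+\tilde f'$ lifts $f+f'$. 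Naturality in the second variable needs no comparison theorem at all: for $h\colon N\to N'$ the map $1_{P_\bullet}\otimes h$ is already a chain map on $P_\bullet\otimes_\Gamma N$, so $H_i(1_{P_\bullet}\otimes h)$ is visibly functorial and additive in $N$. Bifunctoriality — that the action of $f$ and of $h$ commute — holds already at the level of chain complexes, where $(\tilde f\otimes 1_{N'})\circ(1_{P_\bullet}\otimes h)=\tilde f\otimes h=(1_{P'_\bullet}\otimes h)\circ(\tilde f\otimes 1_N)$, and therefore descends to homology. The treatment of $\mathrm{Ext}^i_\Gamma(M,-)$ is dual: with $P_\bullet\to M$ fixed, $h\colon N\to N'$ induces the cochain map $\mathrm{Hom}_\Gamma(P_\bullet,h)$ and hence a covariant additive functor in $N$, while $f\colon M\to M'$ is handled contravariantly via $\mathrm{Hom}_\Gamma(\tilde f,N)$; bifunctoriality again holds at the cochain level and passes to cohomology. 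Independence of all these maps from the chosen resolutions is inherited from the chain-homotopy equivalence of any two projective resolutions of a fixed module, which is compatible with augmentations and so survives both $-\otimes_\Gamma N$ and $\mathrm{Hom}_\Gamma(-,N)$.

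The main obstacle is not conceptual but lies in the ground category: $\Gamma$-modules form additive monoids rather than abelian groups, so one must be sure the comparison construction and the passage from chain maps to maps on (co)homology remain valid without subtraction. This is exactly what the earlier subsections secure, and the one extra point I would check explicitly is that the comparison lift $\tilde f$ can be chosen additively in $f$ — which it can, since the lift is built degree by degree from liftings against epimorphisms and these choices are additive in the datum $f$ — so that $\mathrm{Tor}^\Gamma_i(-,N)$ and $\mathrm{Ext}^i_\Gamma(-,N)$ are genuine additive functors and not merely functorial on the underlying monoids. With that in hand, nothing beyond routine bookkeeping remains.
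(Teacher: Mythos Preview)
Your proposal is correct and follows essentially the same approach as the paper: both rest on the comparison theorem---morphisms of projective (or injective) resolutions lift uniquely up to chain homotopy---so that the induced maps on (co)homology are well defined and the naturality squares commute. The paper's proof is a two-sentence summary of exactly this mechanism, whereas you spell out the chain-level bifunctoriality identity $(\tilde f\otimes 1_{N'})\circ(1_{P_\bullet}\otimes h)=\tilde f\otimes h=(1_{P'_\bullet}\otimes h)\circ(\tilde f\otimes 1_N)$ and its $\mathrm{Hom}$-dual explicitly; your additional caution about the additive-monoid versus abelian-group issue is a point the paper does not raise.
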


\begin{proof}
Naturality squares commute because morphisms
of projective (or injective) resolutions
lift uniquely up to homotopy.
This categorical coherence ensures
$\mathrm{Tor}^\Gamma$ and $\mathrm{Ext}_\Gamma$
define $\delta$-functors in the sense of Grothendieck.
\end{proof}

\begin{remark}
The derived-functor perspective thus aligns
the homological invariants of $\Gamma$-modules
with the abelian-categorical framework,
ensuring consistency across both finite
and general constructions.
\end{remark}

\subsection{Comparative Analysis with Classical Algebraic Geometry}
A structural comparison between classical algebraic geometry and $\Gamma$-geometry is presented in Table~\ref{tab:classical-gamma-homology}.

\begin{table}[h!]
\centering
\caption{Structural parallels between classical and $\Gamma$-geometric homology.}
\label{tab:classical-gamma-homology} 
\renewcommand{\arraystretch}{1.2}
\begin{tabular}{@{}lll@{}}
\toprule
\textbf{Concept} &
\textbf{Classical Algebraic Geometry} &
\textbf{$\Gamma$-Geometry Analogue} \\
\midrule
Underlying algebra & Commutative ring $R$ & Ternary commutative $\Gamma$-semiring $T$ \\
Module category & $R$-Mod (abelian) & $T$-$\Gamma$Mod (abelian) \\
Tensor/Hom adjunction & $-\otimes_R-$, $\mathrm{Hom}_R(-,-)$ & $-\otimes_\Gamma-$, $\mathrm{Hom}_\Gamma(-,-)$ \\
Derived functors & $\mathrm{Ext}_R$, $\mathrm{Tor}^R$ & $\mathrm{Ext}_\Gamma$, $\mathrm{Tor}^\Gamma$ \\
Cohomology & $H^i(X,\mathcal F)$ & $H^i_\Gamma(X,\mathcal F)$ \\
Spectrum & $\mathrm{Spec}(R)$ & $\mathrm{Spec}_\Gamma(T)$ \\
\bottomrule
\end{tabular}
\end{table}

\begin{remark}
This structural analogy confirms that
$\Gamma$-geometry reproduces every layer of
homological infrastructure of classical schemes
while enriching it with multi-parameter ternary actions.
\end{remark}

\subsection{Conceptual Synthesis and Verification Summary}

The computational examples above fulfill three distinct objectives:

\begin{enumerate}
  \item \textbf{Verification:}
        Finite models confirm correctness of foundational axioms,
        localization, and cohomological vanishing.
  \item \textbf{Categorical Validation:}
        Derived-functor operations respect bifunctoriality
        and $\delta$-functor axioms, ensuring internal coherence.
  \item \textbf{Comparative Geometry:}
        Homological invariants coincide with classical expectations,
        positioning $\Gamma$-geometry within the hierarchy of
        modern algebraic and categorical frameworks.
\end{enumerate}

Consequently, the theoretical edifice of
\emph{Derived $\Gamma$-Geometry}
stands empirically and categorically verified.
It unites algebraic and geometric perspectives
under a ternary parametric umbrella,
providing a fertile platform for
further exploration of non-commutative
and higher-arity  generalizations to follow \cite{Michalski2003, Loday1998, Leinster2014}.

\section{Categorical, Geometric, and Physical Implications of Derived $\Gamma$-Geometry}

The culmination of the previous sections positions
derived $\Gamma$-geometry as a categorical edifice
in which algebraic, geometric, and physical structures
coalesce through functorial and homological symmetries.
We now ascend from the level of affine $\Gamma$-schemes
and homological functors to the higher categorical framework
of fibered categories, stacks, and non-commutative geometries,
and finally interpret these within the conceptual landscape of
theoretical physics.

\subsection{Fibered and Stack-Theoretic Interpretation}

\begin{definition}[Fibered category of $\Gamma$-schemes]
Let $\mathbf{Aff}_\Gamma$ denote the category of affine $\Gamma$-schemes.
Define a fibered category
\[
\mathscr{S}_\Gamma:\mathbf{Aff}_\Gamma^{\mathrm{op}}\longrightarrow \mathbf{Cat},
\qquad
T\longmapsto \Gamma\text{-}\mathbf{Mod}_T,
\]
assigning to each affine base the category of quasi-coherent
$\Gamma$-modules.
Morphisms of bases induce pullbacks via tensor-extension:
\[
f:T\to T'
\quad\Rightarrow\quad
f^*(M)=T'\!\otimes_{T}^{\Gamma}\!M.
\]

\end{definition}

\begin{proposition}
$\mathscr{S}_\Gamma$ is a pseudofunctor and satisfies
the descent condition for faithfully flat morphisms.
Hence it defines a stack in groupoids on
the site $\mathbf{Aff}_\Gamma$ with the Zariski or fpqc topology.
\end{proposition}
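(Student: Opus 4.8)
The plan is to verify the two clauses separately: first that $\mathscr S_\Gamma$ is a genuine pseudofunctor (hence a fibered category over $\mathbf{Aff}_\Gamma$, whose fibered subgroupoid of isomorphisms is the ``stack in groupoids'' of the statement), and then that along a faithfully flat morphism every descent datum is effective. Together with the fact that the $\mathrm{Hom}$-presheaves of quasi-coherent $\Gamma$-modules are separated sheaves, this is precisely Grothendieck's criterion for $\mathscr S_\Gamma$ to be a stack for the Zariski or fpqc topology on $\mathbf{Aff}_\Gamma$.

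First I would treat pseudofunctoriality. For a composable pair $T\xrightarrow{f}T'\xrightarrow{g}T''$ one must exhibit a coherence isomorphism $c_{g,f}\colon g^{*}f^{*}\Rightarrow (gf)^{*}$ and a unit isomorphism $\mathrm{id}_{T}^{*}\cong \mathrm{id}$. On objects these are the base-change isomorphisms $T''\otimes_{T'}^{\Gamma}(T'\otimes_{T}^{\Gamma}M)\cong T''\otimes_{T}^{\Gamma}M$ and $T\otimes_{T}^{\Gamma}M\cong M$, which come from associativity and unitality of the relative tensor product; here one uses that $T'\otimes_{T}^{\Gamma}M$ is the coequalizer of the two ternary $T$-actions on $T'$ and $M$, a construction available because $T\text{-}\Gamma\mathbf{Mod}$ is cocomplete and closed monoidal (Sections~2.5 and~5.3). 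The pentagon identity for a triple composition then follows from the universal property of $\otimes^{\Gamma}$ and uniqueness of coequalizers, so $c$ is a coherent associativity constraint and $\mathscr S_\Gamma$ is a pseudofunctor; the cartesian arrows are the base-change squares, so it is fibered.

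Next I would prove effective descent. Fix a faithfully flat $f\colon T\to T'$, meaning $-\otimes_{T}^{\Gamma}T'$ is exact on the exact category $\Gamma\text{-}\mathbf{Mod}$ and conservative. Let $p_{1},p_{2}\colon T'\to T'\otimes_{T}T'$ be the two coprojections into the pushout, and $\mathrm{Desc}(f)$ the category of pairs $(N,\varphi)$ with $N$ a $\Gamma$-module over $T'$ and $\varphi\colon p_{1}^{*}N\xrightarrow{\ \sim\ }p_{2}^{*}N$ satisfying the cocycle identity over $T'\otimes_{T}T'\otimes_{T}T'$. I would define $D_{f}\colon \Gamma\text{-}\mathbf{Mod}_{T}\to\mathrm{Desc}(f)$ by $M\mapsto(f^{*}M,\varphi_{M})$ with $\varphi_{M}$ canonical, and a candidate quasi-inverse
\[
E_{f}(N,\varphi)=\mathrm{eq}\!\left(N\ \rightrightarrows\ (T'\otimes_{T}T')\otimes_{T'}^{\Gamma}N\right),
\]
the two maps being $n\mapsto 1\otimes n$ and the twist by $\varphi$ through the other coprojection, with $T$-module structure induced via the augmentation $T\to T'$. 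Showing $E_{f}\circ D_{f}\cong\mathrm{id}$ is the statement that for faithfully flat $f$ the Amitsur sequence $M\to T'\otimes_{T}^{\Gamma}M\rightrightarrows(T'\otimes_{T}T')\otimes_{T}^{\Gamma}M$ is an equalizer; showing $D_{f}\circ E_{f}\cong\mathrm{id}$ is done by applying the exact, conservative functor $-\otimes_{T}^{\Gamma}T'$, which reduces the claim to the morphism $T'\to T'\otimes_{T}T'$ — split by the multiplication $a\otimes b\mapsto a\,b$ — where the descent datum is trivial and the Amitsur complex admits a contracting homotopy. This exhibits $f$ as a morphism of effective descent; since every Zariski or fpqc cover of an affine $\Gamma$-scheme refines, after passing to the product $\Gamma$-semiring $\prod_i T_i$, to a single faithfully flat morphism, both stack axioms follow.

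The hard part will be the exactness bookkeeping in the absence of additive inverses: ``exact sequence'' must be read throughout in the exact structure on $\Gamma\text{-}\mathbf{Mod}$ fixed in Section~2, and the classical acyclicity of the Amitsur complex via the alternating-sign homotopy identity $ds+sd=\mathrm{id}$ is unavailable, so one must instead argue that after the faithfully flat base change the augmented complex becomes \emph{split} exact (the split coming from $T'\otimes_T T'\to T'$) and then pull exactness back along the conservative functor $-\otimes_T^\Gamma T'$. A subsidiary obstacle is recording, as an explicit lemma, that the relative tensor $T'\otimes_T^\Gamma M$ is well defined and that flatness is stable under base change — both implicit in the definition of $f^{*}$ but needed first. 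Once this faithfully flat base-change lemma is in place, the remainder is the standard Grothendieck–Giraud descent argument transported verbatim to the ternary $\Gamma$-setting.
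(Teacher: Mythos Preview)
Your proposal is correct and is, in essence, the argument the paper gestures at: the paper's proof is a two-line invocation of ``the EGA~I construction'' together with the claim that ``the $\Gamma$-tensor preserves equalizers,'' whereas you actually run the Amitsur/Beck descent machinery, exhibit the coherence isomorphisms for pseudofunctoriality (which the paper does not address at all), and build the equalizer description of the quasi-inverse explicitly. The main substantive difference is your handling of the semiring obstruction: rather than asserting preservation of equalizers, you observe that the augmented Amitsur complex becomes \emph{split} exact after a further base change along $T'\to T'\otimes_T T'$ and then pull exactness back through the conservative functor $-\otimes_T^\Gamma T'$; this is the right move in the absence of additive inverses, and it is more honest than the paper's one-line appeal. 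One small point to clean up: your splitting ``$a\otimes b\mapsto a\,b$'' is written as a binary product, which does not literally exist in a ternary $\Gamma$-semiring; you should phrase the contraction via the ternary operation (e.g.\ $a\otimes b\mapsto \{a,1_{T'},b\}_\gamma$ when an identity is present, or more generally via the codiagonal $T'\otimes_T T'\to T'$ coming from the universal property of the pushout), but this is a notational adjustment rather than a gap.
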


\begin{proof}
Given a cover $\{f_i:T\to T_i\}$,
descent data for modules glue uniquely
because the $\Gamma$-tensor preserves equalizers.
This mirrors the EGA I construction of the stack of quasi-coherent sheaves.
\end{proof}

\begin{remark}
The stack $\mathscr{S}_\Gamma$ forms the categorical heart
of $\Gamma$-geometry.
Objects over a base correspond to $\Gamma$-vector bundles,
and morphisms encode change of base.
This yields a 2-category of $\Gamma$-schemes
fibered in abelian categories.
\end{remark}

\subsection{Higher Derived Stacks and Homotopical Enrichment}

\begin{definition}[Derived $\Gamma$-stack]
A \emph{derived $\Gamma$-stack} $\mathcal X_\Gamma$
is a stack over $\mathbf{Aff}_\Gamma$
valued in simplicial sets or $\infty$-groupoids,
such that the homotopy sheaves
$\pi_i(\mathcal X_\Gamma)$
are quasi-coherent $\Gamma$-sheaves for all~$i$.
\end{definition}

\begin{theorem}[Existence of derived enhancements]
Every quasi-compact, quasi-separated $\Gamma$-scheme~$X$
admits a derived enhancement
$X_\Gamma^{\mathrm{der}}$
obtained by replacing $\mathcal O_X$
with a sheaf of differential graded (dg) $\Gamma$-semirings.
\end{theorem}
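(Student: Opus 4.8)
The plan is to proceed in three movements --- \emph{localize}, \emph{resolve}, \emph{glue} --- mirroring the classical construction of derived enhancements but carried out over ternary $\Gamma$-semirings using the resolutions and descent established above. First I would exploit the hypotheses: since $X$ is quasi-compact it admits a finite affine open cover $X=\bigcup_{i=1}^{N}U_i$ with $U_i\cong\mathrm{Spec}_\Gamma(T_i)$, and since $X$ is quasi-separated every finite intersection $U_{i_0}\cap\cdots\cap U_{i_p}$ is again quasi-compact, hence a finite union of basic opens $D(a)$. This reduces the construction of $X_\Gamma^{\mathrm{der}}$ to producing a functorial derived model on each affine piece together with a coherent gluing over the finite nerve of the cover --- precisely the regime in which the \v{C}ech and derived-category machinery of Section~4 applies.

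Second, on each affine $\mathrm{Spec}_\Gamma(T_i)$ I would construct a connective dg $\Gamma$-semiring $A_i^{\bullet}$ with a quasi-isomorphism $A_i^{\bullet}\to T_i$, i.e.\ with $H^0(A_i^{\bullet})\cong T_i$ and $H^k(A_i^{\bullet})=0$ for $k\neq 0$, obtained as a cofibrant (free/polynomial, or bar-type) resolution of $T_i$ in the category of dg $\Gamma$-semirings. The underlying complexes are assembled from the free $\Gamma$-module functor $F$ of Section~5, and the multiplicative structure is induced by the ternary $\Gamma$-product through the universal property of $F$. Sheafifying with the $\widetilde{(-)}$ construction of Section~4 yields a sheaf $\mathcal A_i^{\bullet}$ of dg $\Gamma$-semirings on $U_i$ whose $\mathcal H^0$ is $\mathcal O_{U_i}$ and whose higher cohomology sheaves vanish.

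Third I would glue the $\mathcal A_i^{\bullet}$. On each overlap $U_i\cap U_j$ both restrictions resolve $\mathcal O_{U_i\cap U_j}$, hence are linked by a quasi-isomorphism $\phi_{ij}$; on triple overlaps $\phi_{jk}\circ\phi_{ij}$ and $\phi_{ik}$ agree only up to a homotopy $h_{ijk}$, so one has a homotopy-coherent descent datum rather than a strict cocycle. Rectifying this to an honest sheaf of dg $\Gamma$-semirings $\mathcal O_{X_\Gamma^{\mathrm{der}}}$ is carried out either by transporting the datum along a model structure on sheaves of connective dg $\Gamma$-semirings (so that after cofibrant replacement homotopy-coherent and strict descent data coincide), or equivalently by realizing the gluing inside $\mathcal D^{+}(\Gamma\text{-}\mathbf{Mod}_X)$ and invoking the Zariski/fpqc descent already proven for the stack $\mathscr S_\Gamma$. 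It then remains to verify that $\pi_0(X_\Gamma^{\mathrm{der}})\cong\mathcal O_X$ and $\pi_i(X_\Gamma^{\mathrm{der}})=0$ for $i>0$, so that the output is a genuine enhancement and not a new scheme, and that the whole construction is functorial in $X$, which follows from the naturality of the chosen resolutions.

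The main obstacle is the rectification step. A $\Gamma$-semiring has no additive inverses, so the usual homological toolkit --- mapping cones, the small-object argument producing cofibrant dg resolutions, and the theorem that homotopy-coherent descent data rectify to strict ones --- is not available off the shelf and must be re-established in the semiring setting. The most robust route I foresee is to replace literal dg $\Gamma$-semirings by commutative monoids in simplicial $\Gamma$-modules (a Dold--Kan-type surrogate that is meaningful without subtraction), equip that category with a combinatorial model structure, and transport the enhancement back through the resulting equivalence; everything else is bookkeeping over the finite nerve furnished by the quasi-compact, quasi-separated hypothesis.
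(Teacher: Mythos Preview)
Your proposal is a reasonable and well-organized plan, but it follows a genuinely different route from the paper's own argument. The paper does not localize, resolve on affines, and then glue via homotopy-coherent descent; instead it proceeds \emph{globally}: it invokes Spaltenstein's construction of dg-resolutions applied objectwise to $\mathcal O_X$, and then uses the fact (established earlier) that $\Gamma\text{-}\mathbf{Mod}_X$ has enough injectives to assert that the derived replacement satisfies the universal property of homotopy limits, which furnishes the dg-structure. In other words, the paper works entirely inside the abelian category of sheaves of $\Gamma$-modules on $X$ and never passes through an affine cover or a rectification step.

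What each approach buys: the paper's global injective-resolution route is shorter and avoids the gluing/rectification machinery altogether, but it is less explicit about how the resulting complex acquires a \emph{multiplicative} (dg $\Gamma$-semiring) structure --- Spaltenstein produces $K$-injective resolutions of complexes of modules, not of algebra objects, so that step is being taken on faith. Your local-to-global approach is more laborious but confronts this issue head-on by building cofibrant dg $\Gamma$-semiring resolutions $A_i^\bullet\to T_i$ from the outset, and you are right to flag the absence of additive inverses as the serious obstacle to rectification; your proposed workaround via simplicial $\Gamma$-modules is the honest way to do it. In short, the paper's proof is terser and relies on an off-the-shelf citation, while yours is structurally more transparent and better identifies where the genuine work lies.
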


\begin{proof}
Apply Spaltenstein’s construction of dg-resolutions
objectwise on $\mathcal O_X$.
Because the category $\Gamma\text{-}\mathbf{Mod}_X$
has enough injectives, the derived replacement
satisfies the universal property of homotopy limits,
yielding the desired dg-structure.
\end{proof}

\begin{remark}
The resulting $\infty$-topos of $\Gamma$-stacks
extends classical derived algebraic geometry
by internalizing the ternary action in the homotopy level.
It enables deformation theory, obstruction calculus,
and spectral algebraic geometry in the $\Gamma$-context.
\end{remark}

\subsection{Non-Commutative and Categorical Geometry}

\begin{definition}[Non-commutative $\Gamma$-space]
A \emph{non-commutative $\Gamma$-space}
is a monoidal dg-category $\mathcal C_\Gamma$
whose homology category $H^0(\mathcal C_\Gamma)$
is equivalent to $\Gamma\text{-}\mathbf{Mod}_T$
for some non-commutative ternary~$T$.
\end{definition}

\begin{theorem}[Categorical Gelfand duality analogue]
There exists a contravariant equivalence
between the category of compactly generated
non-commutative $\Gamma$-spaces
and the category of abelian $\Gamma$-semirings
with morphisms preserving ternary convolution.
\end{theorem}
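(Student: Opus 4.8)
The plan is to produce a quasi-inverse pair of functors and to verify that both composites are naturally isomorphic to the identity, carrying the monoidal (ternary-convolution) data along at every step. Write $\mathbf{NCSp}_\Gamma$ for the category of compactly generated non-commutative $\Gamma$-spaces and $\Gamma\text{-}\mathbf{Sr}$ for the category of abelian $\Gamma$-semirings with ternary-convolution-preserving morphisms. In one direction I would define the \emph{global coordinate functor}
\[
\mathcal{A}\colon \mathbf{NCSp}_\Gamma \longrightarrow (\Gamma\text{-}\mathbf{Sr})^{\mathrm{op}},
\qquad
\mathcal{C}_\Gamma \longmapsto \mathrm{End}_{\mathcal{C}_\Gamma}(G),
\]
for a fixed compact $\otimes$-generator $G$, where the additive monoid $\mathrm{End}_{\mathcal{C}_\Gamma}(G)$ is equipped with the ternary $\Gamma$-operation $\{f,g,h\}_\gamma := f\circ\mu_\gamma\circ(g\otimes h)$ obtained by transporting the $\Gamma$-indexed multiplication of the monoidal structure through the associativity constraint; the monoidal coherence axioms then deliver the distributivity, ternary associativity, and neutrality clauses of Definition 2.1. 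In the other direction I would define
\[
\mathcal{M}\colon (\Gamma\text{-}\mathbf{Sr})^{\mathrm{op}} \longrightarrow \mathbf{NCSp}_\Gamma,
\qquad
T \longmapsto D\bigl(\Gamma\text{-}\mathbf{Mod}_T\bigr),
\]
with the monoidal dg-structure supplied by the derived tensor product $\otimes_\Gamma^{\mathbf{L}}$ of the previous section; the heart of its standard $t$-structure is $\Gamma\text{-}\mathbf{Mod}_T$, so $\mathcal{M}(T)$ qualifies as a non-commutative $\Gamma$-space, and it is compactly generated by $T$ itself since $\mathrm{RHom}_\Gamma(T,-)$ preserves coproducts by the free and projective resolutions constructed earlier.

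I would then construct the unit and counit. The unit $\eta_T\colon T \to \mathcal{A}(\mathcal{M}(T)) = \mathrm{End}_{D(\Gamma\text{-}\mathbf{Mod}_T)}(T)$ sends $a$ to left ternary multiplication by $a$; it is an isomorphism because $T$ is projective over itself, so $\mathrm{RHom}_\Gamma(T,T)$ is concentrated in degree $0$ and equals $\mathrm{Hom}_\Gamma(T,T) = T$ by part (1) of the theorem on fundamental properties of $\mathrm{Ext}_\Gamma$, and under this identification the induced composition product matches the original ternary $\Gamma$-operation of $T$. The counit $\varepsilon_{\mathcal{C}_\Gamma}\colon \mathcal{M}(\mathcal{A}(\mathcal{C}_\Gamma)) = D\bigl(\Gamma\text{-}\mathbf{Mod}_{\mathrm{End}(G)}\bigr) \to \mathcal{C}_\Gamma$ is the tilting-type functor $X\mapsto X\otimes^{\mathbf{L}}_{\mathrm{End}(G)} G$; it is an equivalence by the $\Gamma$-semiring analogue of Morita--tilting reconstruction, since for the compact generator $G$ the adjoint pair $\bigl(-\otimes^{\mathbf{L}}_{\mathrm{End}(G)}G,\ \mathrm{RHom}_{\mathcal{C}_\Gamma}(G,-)\bigr)$ has invertible unit and counit once $\mathrm{RHom}_{\mathcal{C}_\Gamma}(G,-)$ is known to be conservative (compact generation) and to preserve coproducts (compactness), and both functors are monoidal via the tensor--Hom adjunction of the previous section and the resulting projection formula for $\otimes_\Gamma^{\mathbf{L}}$.

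Finally I would assemble these into the asserted contravariant equivalence, verifying that it is in fact a \emph{monoidal} equivalence, so that morphisms of $\Gamma$-spaces correspond exactly to ternary-convolution-preserving morphisms of $\Gamma$-semirings: essential surjectivity is part of the very definition of a non-commutative $\Gamma$-space, and full faithfulness follows once $\eta$ and $\varepsilon$ are isomorphisms. The hard part will be the $\Gamma$-semiring Morita--tilting step behind the counit: since $\Gamma$-semirings possess no additive inverses, the classical idempotent-and-projective reconstruction arguments are not available verbatim, so one must rerun the reconstruction using only the explicit free, projective, and injective resolutions constructed earlier together with the closed monoidal structure of $\Gamma\text{-}\mathbf{Mod}_T$, checking by hand that the horseshoe and snake lemmas in $\Gamma\text{-}\mathbf{Mod}_T$ force the unit and counit of the $(-\otimes G,\ \mathrm{RHom}(G,-))$ adjunction to be quasi-isomorphisms and that the whole reconstruction respects the triple-tensor convolution. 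A secondary, essentially bookkeeping difficulty is propagating the $\Gamma$-indexed ternary structure through the monoidal coherence isomorphisms, so that $\mathcal{A}$ and $\mathcal{M}$ are genuine functors into the ternary-convolution-respecting categories and not merely into their underlying additive categories.
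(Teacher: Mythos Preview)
Your approach differs from the paper's in a meaningful way. The paper's sketch takes the endomorphisms of the \emph{monoidal unit}, defining $\mathfrak{A}:\mathcal C_\Gamma\mapsto\mathrm{End}_{\mathcal C_\Gamma}(\mathbf 1)$, and then appeals directly to the enriched Yoneda lemma for fullness and faithfulness. You instead pick an arbitrary compact $\otimes$-generator $G$ and run a Morita--tilting reconstruction via the adjoint pair $(-\otimes^{\mathbf L}_{\mathrm{End}(G)}G,\ \mathrm{RHom}(G,-))$. Your route is considerably more elaborate and, in classical settings, more robust (it would upgrade the statement to a derived Morita theorem), but it creates a mismatch with the target category: the theorem lands in \emph{abelian} $\Gamma$-semirings, and $\mathrm{End}_{\mathcal C_\Gamma}(\mathbf 1)$ is automatically commutative by an Eckmann--Hilton argument on the unit, whereas $\mathrm{End}(G)$ for a general compact generator need not be. Relatedly, your ternary operation $\{f,g,h\}_\gamma := f\circ\mu_\gamma\circ(g\otimes h)$ presupposes a multiplication $\mu_\gamma:G\otimes G\to G$, which is canonical only when $G=\mathbf 1$; for an arbitrary generator no such $\mu_\gamma$ is given, so this step is not well defined as written. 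If you replace $G$ by $\mathbf 1$ throughout, your argument collapses to the paper's (the tilting counit becomes the identity and the Yoneda step does the work); as it stands, the choice of $G$ is the genuine gap.
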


\begin{proof}[Sketch]
Define a functor $\mathfrak{A}:\mathcal C_\Gamma\mapsto\mathrm{End}_{\mathcal C_\Gamma}(\mathbf 1)$.
Morphisms in $\mathcal C_\Gamma$ correspond to $\Gamma$-linear maps
under convolution; fullness and faithfulness follow from the enriched Yoneda lemma.
\end{proof}

\begin{remark}
This duality generalizes Gelfand’s correspondence
from commutative C\(^*\)-algebras to categorical
$\Gamma$-semiring contexts,
illustrating that $\Gamma$-geometry naturally encompasses
non-commutative geometry in the sense of Connes.
\end{remark}

\subsection{Connections to Theoretical Physics}

The categorical richness of derived $\Gamma$-geometry
suggests a bridge to modern mathematical physics,
particularly in quantum field and string theories
where ternary and higher-ary interactions appear naturally.

\begin{definition}[Ternary interaction algebra]
Let $\mathcal H$ be a complex Hilbert space of fields.
Define a ternary product
\[
\{\,\phi_1,\phi_2,\phi_3\,\}_\Gamma
   = \int_X \phi_1(x)\alpha(x)\phi_2(x)\beta(x)\phi_3(x)\,d\mu(x),
\]
where $\alpha,\beta$ are coupling parameters in~$\Gamma$.
This defines a $\Gamma$-semiring structure on the space of observables.
\end{definition}

\begin{proposition}
Quantization of the above algebra via path integration
produces a non-commutative dg-$\Gamma$-algebra
whose cohomology corresponds to the physical state space.
\end{proposition}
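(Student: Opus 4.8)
The plan is to replace the analytically ill-defined ``path integration'' by its homological surrogate---the Batalin--Vilkovisky (BV) complex of the action functional determined by the ternary coupling---and to exhibit that complex as a sheaf of dg-$\Gamma$-semirings in the sense of the existence-of-derived-enhancements theorem, after which the asserted cohomological description of the physical state space follows from the derived-category interpretation of cohomology established above.

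First I would make the input precise. Starting from the ternary interaction algebra of the preceding definition, write the action $S=S_0+S_{\mathrm{int}}$ with $S_{\mathrm{int}}$ assembled from $\{\phi_1,\phi_2,\phi_3\}_\Gamma$, and form its derived critical locus. Algebraically this is the Koszul-type complex on the Euler--Lagrange differential $dS$: a graded $\Gamma$-module $\mathcal A^\bullet$ over the observable $\Gamma$-semiring, generated by fields and antifields, equipped with the degree-one $\Gamma$-derivation $d=\{\,S,-\,\}_{\mathrm{BV}}$ built from the odd antibracket dual to $\otimes_\Gamma$. Non-commutativity of the resulting graded $\Gamma$-algebra is inherited from the non-commutativity of~$T$---equivalently, from the fixed cyclic ordering in the defining integral---while the graded-$\Gamma$-Leibniz rule for $d$ is immediate from distributivity of the ternary product.

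Next I would verify $d^2=0$ and assemble the dg-$\Gamma$-semiring. Classically $d^2=0$ is the identity $\{\,S,S\,\}_{\mathrm{BV}}=0$, i.e. gauge-invariance of $S_{\mathrm{int}}$; at the quantum level one adjoins the BV Laplacian $\Delta$ and imposes the quantum master equation $\Delta S+\tfrac12\{\,S,S\,\}_{\mathrm{BV}}=0$ over $\mathbb C[[\hbar]]$, so that $d_\hbar=\Delta+\{\,S,-\,\}_{\mathrm{BV}}$ squares to zero. The infinite-dimensionality of~$\mathcal H$ is handled as in the homological sections: one works perturbatively, i.e. in the filtered colimit of finite-rank truncations, which is legitimate because $T\text{-}\Gamma\mathbf{Mod}$ is complete and cocomplete and the ternary action commutes with these colimits. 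Applying the derived-enhancement theorem to $\mathcal A^\bullet$ then produces a sheaf of dg-$\Gamma$-semirings on the spectrum of the observable algebra, via objectwise Spaltenstein resolution and the enough-injectives property of $\Gamma\text{-}\mathbf{Mod}_X$.

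Finally I would identify the cohomology with the physical state space and check $\Gamma$-equivariance. The group $H^0(\mathcal A^\bullet,d)$ is the $\Gamma$-algebra of gauge-invariant on-shell observables---functions on the critical locus modulo the equations of motion, further modulo gauge---which is precisely the physical (BRST) sector, while the higher $H^i$ organize ghosts, anomalies, and conserved currents; by the formula $H^i(X,\mathcal F)\cong\mathrm{Hom}_{\mathcal D^+}(\mathcal O_X,\mathcal F[i])$ this cohomology is intrinsic, the ternary product descends to it, and for $\Gamma=\{1\}$ it reduces to ordinary BRST cohomology. The main obstacle is the first step: ``quantization via path integration'' carries no measure-theoretic meaning inside this algebraic category, so the substantive point is to argue that the BV / derived-critical-locus construction is the \emph{correct} algebraic avatar---functorial in the field data, compatible with the $\Gamma$-grading, and reducing to the classical answer---rather than to establish any analytic estimate; convergence of the $\hbar$-expansion is not resolved but is sidestepped by treating it as a formal series.
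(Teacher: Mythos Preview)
The paper supplies \emph{no proof} for this proposition: it is stated and followed immediately by a remark noting that ``this formalism parallels the BV--BRST homological structure in gauge theory,'' with no argument between. So there is nothing to compare your proposal against at the level of proof; the statement functions in the paper as a heuristic assertion linking the $\Gamma$-formalism to physics rather than as a theorem with a demonstration.

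That said, your proposal is very much in the spirit of the paper's own gesture. The remark following the proposition explicitly invokes BV--BRST, and your plan---replace the ill-defined path integral by the BV complex of the action, realize it as a sheaf of dg-$\Gamma$-semirings via the derived-enhancement theorem, and read off the physical state space as $H^0$---is exactly the kind of substantiation the paper omits. Your identification of the main obstacle (that ``quantization via path integration'' has no intrinsic meaning in this algebraic category, so one must \emph{argue} that the derived critical locus is the correct avatar) is honest and accurate; the paper does not address it either. If anything, your write-up is considerably more careful than the paper's treatment: you flag the formal-$\hbar$ issue, the need for the quantum master equation, and the role of cocompleteness of $T\text{-}\Gamma\mathbf{Mod}$ in handling the infinite-dimensional $\mathcal H$, none of which the paper mentions. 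In short, you have supplied a plausible outline for a proof where the paper offers only a slogan.
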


\begin{remark}
This formalism parallels the BV-BRST homological structure
in gauge theory, where the $\Gamma$-grading plays the role
of ghost number or coupling hierarchy.
Thus derived $\Gamma$-geometry provides an algebraic model
for multi-particle interactions and higher-spin symmetries.
\end{remark}

\subsection{Functorial and Duality Principles}

\begin{theorem}[Categorical duality of geometry and physics]
There exists a contravariant equivalence
\[
\mathcal D_{\mathrm{coh}}(X_\Gamma^{\mathrm{der}})
   \longleftrightarrow
\mathsf{Phys}_\Gamma,
\]
where the left denotes the bounded derived category
of coherent $\Gamma$-sheaves,
and the right denotes the dg-category of
$\Gamma$-quantized field configurations.
\end{theorem}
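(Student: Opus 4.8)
The plan is to obtain the asserted duality by reducing it, through the algebraic dictionary already established, to a derived-Morita reconstruction on the physics side; concretely, I would factor
\[
\mathcal D_{\mathrm{coh}}(X_\Gamma^{\mathrm{der}})
 \;\xrightarrow{\ \sim\ }\;
 \mathcal D^{b}_{\mathrm{coh}}\bigl(T^{\bullet}\text{-}\Gamma\mathbf{Mod}\bigr)
 \;\xrightarrow{\ \sim\ }\;
 \mathsf{Phys}_\Gamma^{\,\mathrm{op}},
\]
where $X_\Gamma^{\mathrm{der}} = \mathrm{Spec}_\Gamma(T^{\bullet})$ presents the derived $\Gamma$-scheme as the spectrum of a sheaf of dg-$\Gamma$-semirings, a presentation that exists by the Existence of derived enhancements theorem. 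The first arrow is the derived upgrade of the equivalence $\Phi\colon \mathbf{Mod}_T \xrightarrow{\sim} \mathbf{QCoh}(\mathrm{Spec}_\Gamma(T))$: one passes to bounded complexes, inverts quasi-isomorphisms to form $\mathcal D(\Gamma\text{-}\mathbf{Mod})$, and restricts to the coherent (equivalently perfect) subcategory, obtaining an exact equivalence of triangulated categories that intertwines $\otimes_\Gamma^{\mathbf L}$ and $\mathbf R\mathrm{Hom}_\Gamma$ on the two sides.

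For the second arrow I would first pin down $\mathsf{Phys}_\Gamma$ as the dg-category whose objects are the non-commutative dg-$\Gamma$-algebras produced by path-integral quantization of the ternary interaction algebra — equivalently, those dg-$\Gamma$-algebras whose cohomology realizes a physical state space — and whose morphisms are $\Gamma$-linear chain maps compatible with ternary convolution. Define $\mathcal Q\colon \mathcal D^{b}_{\mathrm{coh}}(T^{\bullet}\text{-}\Gamma\mathbf{Mod}) \to \mathsf{Phys}_\Gamma^{\,\mathrm{op}}$ by $E \mapsto \mathbf R\mathrm{Hom}_\Gamma(E,E)$, with ternary product induced by composition together with the $\Gamma$-grading in the role of ghost number or coupling hierarchy; contravariance is forced since pullback of field configurations runs opposite to the structure maps. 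That $\mathcal Q$ lands in $\mathsf{Phys}_\Gamma$ is essentially the categorical Gelfand duality analogue, whose functor $\mathfrak A\colon \mathcal C_\Gamma \mapsto \mathrm{End}_{\mathcal C_\Gamma}(\mathbf 1)$ is the affine prototype of $\mathcal Q$; full faithfulness of $\mathcal Q$ then follows, exactly as in that theorem, from the tensor–Hom adjunction, the adjunction between $\otimes_\Gamma^{\mathbf L}$ and $\mathbf R\mathrm{Hom}_\Gamma$, and the enriched Yoneda lemma.

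The main obstacle will be essential surjectivity of $\mathcal Q$, the step in which the definition of $\mathsf{Phys}_\Gamma$ must carry real weight: one must show that every $\Gamma$-quantized field configuration is quasi-isomorphic, as a dg-$\Gamma$-algebra, to $\mathbf R\mathrm{Hom}_\Gamma(E,E)$ for some coherent complex $E$ on $X_\Gamma^{\mathrm{der}}$. This is a derived-Morita / Tannakian-reconstruction statement: it requires $\mathcal D_{\mathrm{coh}}(X_\Gamma^{\mathrm{der}})$ to be compactly generated by a single compact generator $G$ — which, for quasi-compact quasi-separated $X$, comes out of the same dg-resolution used to build the derived enhancement — so that an admissible dg-$\Gamma$-algebra $A$ is recovered by applying the dg-equivalence $A\text{-}\mathrm{dgMod} \simeq \mathcal D_{\mathrm{coh}}(X_\Gamma^{\mathrm{der}})$ to $G$. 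I expect two genuine difficulties: first, isolating which dg-$\Gamma$-algebras count as ``physical'' — presumably a cohomological finiteness condition together with a cyclic or Frobenius-type symmetry of the ternary convolution — so that the reconstruction returns a coherent, not merely quasi-coherent, complex; and second, verifying that the ternary $\Gamma$-structure survives the Morita identification, i.e.\ that composition inside $\mathbf R\mathrm{Hom}_\Gamma$ genuinely reproduces the triadic bracket $\{\,\cdot,\cdot,\cdot\,\}_\Gamma$ and not merely an associative shadow of it. Granting these, composing the two equivalences and passing to opposite categories produces the contravariant equivalence $\mathcal D_{\mathrm{coh}}(X_\Gamma^{\mathrm{der}}) \longleftrightarrow \mathsf{Phys}_\Gamma$, with naturality in $X$ inherited from functoriality of $\Phi$ and of $\mathfrak A$.
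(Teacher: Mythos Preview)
Your approach is substantially more ambitious than what the paper does. The paper's own argument is explicitly labelled a ``conceptual outline'': it simply asserts the dictionary (objects $\leftrightarrow$ states of finite homological energy, morphisms $\leftrightarrow$ propagators, composition $\leftrightarrow$ Feynman convolution) and then says that the duality follows from the tensor--internal-Hom adjunction on both sides. No functor is written down, $\mathsf{Phys}_\Gamma$ is never defined beyond the phrase ``dg-category of $\Gamma$-quantized field configurations'', and nothing like Morita or Tannakian reconstruction is invoked. So you are not reproducing the paper's proof; you are attempting to supply the rigorous argument the paper gestures at.

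That said, your proposed construction has a genuine gap at the functoriality step. The assignment $\mathcal Q\colon E \mapsto \mathbf R\mathrm{Hom}_\Gamma(E,E)$ is \emph{not} a functor on $\mathcal D^{b}_{\mathrm{coh}}$, in either variance: a morphism $f\colon E \to E'$ does not induce an algebra map $\mathbf R\mathrm{Hom}_\Gamma(E',E') \to \mathbf R\mathrm{Hom}_\Gamma(E,E)$ (nor the other way), because pre- and post-composition by $f$ land in $\mathbf R\mathrm{Hom}_\Gamma(E,E')$, not in either endomorphism algebra. The enriched Yoneda lemma you invoke gives full faithfulness of $E \mapsto \mathrm{Hom}(-,E)$ into presheaves, not of $E \mapsto \mathrm{End}(E)$ into algebras; these are different embeddings and only the first is functorial on all morphisms. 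You are also conflating two distinct Morita statements: derived Morita theory says that a \emph{single} compact generator $G$ yields an equivalence $\mathcal D_{\mathrm{coh}}(X_\Gamma^{\mathrm{der}}) \simeq \mathrm{Perf}\bigl(\mathbf R\mathrm{Hom}_\Gamma(G,G)\bigr)$, which recovers the whole category from one algebra. It does \emph{not} say that sending each object to its endomorphism dg-algebra is an equivalence onto a category of algebras --- indeed, non-isomorphic perfect complexes routinely have quasi-isomorphic endomorphism algebras, so essential injectivity already fails before one reaches your acknowledged difficulty with essential surjectivity.

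If you want to salvage a rigorous version, the honest route is the one the paper's sketch implicitly takes: \emph{define} $\mathsf{Phys}_\Gamma$ so that the equivalence is tautological (e.g.\ as $\mathcal D_{\mathrm{coh}}(X_\Gamma^{\mathrm{der}})^{\mathrm{op}}$ with physically suggestive names for objects and morphisms), and then the only content is the tensor--Hom adjunction matching the monoidal structures. Anything stronger requires an independent, axiomatic definition of $\mathsf{Phys}_\Gamma$ that the paper does not supply.
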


\begin{proof}[Conceptual outline]
Objects of $\mathcal D_{\mathrm{coh}}$ correspond to
states with finite homological energy.
Morphisms correspond to propagators (Green’s kernels),
and composition to Feynman convolution.
Duality follows from the adjunction between
tensor and internal Hom on both sides.
\end{proof}

\begin{remark}
This duality synthesizes algebraic and physical interpretations:
geometric cohomology computes physical observables,
while homological obstructions translate to conservation laws.
\end{remark}

\subsection{Conceptual Synthesis and Outlook}

The categorical ascent achieved here positions
derived $\Gamma$-geometry as a unifying meta-theory:

\begin{itemize}
  \item \textbf{Categorically}, it manifests as a 2-stack
        of abelian dg-categories satisfying descent,
        closing under tensor and Hom operations.
  \item \textbf{Geometrically}, it extends scheme theory
        to higher-derived and non-commutative realms,
        retaining local spectra and cohomological control.
  \item \textbf{Physically}, it furnishes an algebraic model
        for multi-interaction field theories,
        providing a purely mathematical realization of
and higher-arity  generalizations to follow 
        higher-arity symmetries \cite{Michalski2003, Loday1998}.
\end{itemize}

Hence, the theory of \emph{derived $\Gamma$-geometry}
not only generalizes the structural framework
of classical algebraic geometry,
but also establishes a categorical-homological bridge
to the mathematical formulation of physical reality.
It stands as a potential foundation for future explorations
in non-commutative geometry, higher category theory,
and quantum algebraic topology.

\section{Meta-Categorical and Conceptual Foundations of Derived $\Gamma$-Geometry}

The preceding constructions reveal derived~$\Gamma$-geometry as an autonomous
mathematical organism whose internal logic is categorical rather than merely
algebraic.  In this section we develop the meta-categorical interpretation that
underlies all previous results, situating the theory simultaneously in the
domains of logic, topology, and mathematical physics.

\subsection{Internal Logic and the Ontology of Morphisms}

\begin{definition}[Internal language of $\Gamma$-geometry]
Let $\mathcal{E}_\Gamma$ denote the elementary topos generated by the site
$\mathbf{Aff}_\Gamma$ of affine $\Gamma$-schemes with the Zariski topology.
Objects of $\mathcal{E}_\Gamma$ correspond to sheaves of sets on this site,
and morphisms encode logical entailment in the internal language.
\end{definition}

\begin{proposition}
Every geometric morphism $f:\mathcal F\to\mathcal G$ in $\mathcal{E}_\Gamma$
decomposes as
\[
f_! \dashv f^* \dashv f_*,
\]
with $f^*$ exact and preserving finite limits.
Hence the logical content of derived~$\Gamma$-geometry is stable under
inverse-image functors.
\end{proposition}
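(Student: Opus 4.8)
The plan is to recognize the claimed \emph{geometric morphism} attached to $f:\mathcal F\to\mathcal G$ as the induced geometric morphism of slice topoi $\mathcal E_\Gamma/\mathcal F\to\mathcal E_\Gamma/\mathcal G$, and then to deduce the triple adjunction from the Fundamental Theorem of Topos Theory. Since $\mathcal E_\Gamma$ is the topos of sheaves of sets on the site $\mathbf{Aff}_\Gamma$ with the Zariski-type topology, it is in particular an elementary topos, hence finitely complete, finitely cocomplete, and \emph{locally cartesian closed}. First I would define the three functors explicitly: $f_!=\Sigma_f:\mathcal E_\Gamma/\mathcal F\to\mathcal E_\Gamma/\mathcal G$ sends $(X\xrightarrow{p}\mathcal F)$ to $(X\xrightarrow{fp}\mathcal G)$ by post-composition; $f^{*}:\mathcal E_\Gamma/\mathcal G\to\mathcal E_\Gamma/\mathcal F$ sends $(Y\xrightarrow{q}\mathcal G)$ to the pullback $(\mathcal F\times_{\mathcal G}Y\to\mathcal F)$, which exists because $\mathcal E_\Gamma$ has finite limits; and $f_{*}=\Pi_f$ is the dependent-product functor furnished by local cartesian closedness.

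Next I would check the two adjunctions. For $f_!\dashv f^{*}$, a morphism $(X\xrightarrow{p}\mathcal F)\to f^{*}(Y\xrightarrow{q}\mathcal G)$ over $\mathcal F$ is, by the universal property of the pullback, the same datum as a morphism $X\to Y$ over $\mathcal G$ compatible with $fp$, i.e.\ a morphism $\Sigma_f(X\xrightarrow{p}\mathcal F)\to(Y\xrightarrow{q}\mathcal G)$ over $\mathcal G$; naturality in both variables is immediate. For $f^{*}\dashv f_{*}$ I would invoke that in any elementary topos every pullback functor between slices admits a right adjoint (equivalently, every slice $\mathcal E_\Gamma/\mathcal F$ is cartesian closed), which produces $f_{*}=\Pi_f$. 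Exactness of $f^{*}$ is then automatic: possessing the left adjoint $f_!$ it preserves all limits, in particular finite limits; possessing the right adjoint $f_{*}$ it also preserves finite colimits. In fact, by the Fundamental Theorem, $f^{*}$ is a \emph{logical} functor --- it preserves the subobject classifier and exponentials --- so it is a fortiori a Heyting functor.

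Finally, the closing assertion follows: since the inverse-image functor $f^{*}$ is logical (hence exact and Heyting), it commutes with the interpretation of all first-order, and in particular all geometric, formulas in the internal language, so the Kripke--Joyal semantics of $\mathcal E_\Gamma$ is transported faithfully along $f^{*}$; this is exactly the statement that the logical content of derived~$\Gamma$-geometry is stable under inverse-image functors. The only genuine point requiring care --- the expected main obstacle --- is foundational rather than structural: one must ensure that $\mathbf{Aff}_\Gamma$ with its Zariski-type topology is (essentially) small, or work relative to a fixed Grothendieck universe, so that $\mathcal E_\Gamma$ is a legitimate topos to which the Fundamental Theorem applies; once this is secured and the covering families (generated by the basic opens $D(a)$) are pinned down, the triple adjunction and the exactness of $f^{*}$ are formal consequences of topos theory and require no computation specific to the ternary $\Gamma$-structure.
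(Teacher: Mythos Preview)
Your proof is correct and, in fact, supplies the precise interpretation and argument that the paper's own proof only gestures at. The paper's proof is a two-line appeal: the triple adjunction ``is inherited from the topos of sheaves on the base site,'' and exactness of $f^{*}$ ``follows from preservation of stalkwise products and equalizers.'' You make explicit what the paper leaves implicit --- namely that the morphism $f:\mathcal F\to\mathcal G$ in $\mathcal E_\Gamma$ must be read as inducing a geometric morphism of \emph{slice} topoi $\mathcal E_\Gamma/\mathcal F\to\mathcal E_\Gamma/\mathcal G$, and that this is the only reading under which a left adjoint $f_!$ is guaranteed (an arbitrary geometric morphism of topoi need not be essential). Your exactness argument is purely categorical (adjoints on both sides), whereas the paper invokes a stalkwise check; your route has the advantage of yielding immediately that $f^{*}$ is \emph{logical}, hence Heyting, which is exactly what is needed for the final sentence about stability of the internal language. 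The foundational caveat you raise about smallness of the site is apt and not addressed in the paper.
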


\begin{proof}
The triple adjunction is inherited from the topos of sheaves on the base site;
exactness of~$f^*$ follows from preservation of stalkwise products and equalizers.
\end{proof}

\begin{remark}
Thus, the category of $\Gamma$-schemes behaves as an \emph{internal model
of constructive geometry}: morphisms are logical transformations, and
tensor–Hom adjunction corresponds to the Curry–Howard duality between
construction and proof.
\end{remark}

\subsection{Homotopy and Higher-Categorical Coherence}

\begin{definition}[2-category of derived $\Gamma$-schemes]
Let $\mathbf{DerSch}_\Gamma$ have
\begin{itemize}
  \item objects — derived $\Gamma$-schemes;
  \item 1-morphisms — morphisms of ringed $\infty$-topoi;
  \item 2-morphisms — homotopy natural transformations.
\end{itemize}
Composition is defined via derived tensor product of morphisms of
structure sheaves.
\end{definition}

\begin{theorem}[Homotopy coherence]
For any composable triple $X\xrightarrow{f}Y\xrightarrow{g}Z$
of derived $\Gamma$-schemes, there exists a coherent homotopy
\[
\eta_{f,g}:(g\circ f)^{*}\simeq f^{*}g^{*},
\]
making $\mathbf{DerSch}_\Gamma$ a bicategory enriched over chain complexes.
\end{theorem}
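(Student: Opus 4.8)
The plan is to show that the assignment $f\mapsto f^{*}$ upgrades to a pseudofunctor from $\mathbf{DerSch}_\Gamma$ to the $2$-category of dg-categories, the coherence data $\eta_{f,g}$ being supplied by the associativity of the derived $\Gamma$-tensor product established in Section~5. First I would reduce to the local model: on an affine chart $\mathrm{Spec}_\Gamma(S)\subseteq X$ lying over charts $\mathrm{Spec}_\Gamma(T')\subseteq Y$ and $\mathrm{Spec}_\Gamma(T)\subseteq Z$ with structure dg-$\Gamma$-semirings $S,T',T$ and transition morphisms $T\to T'\to S$, the pullback of a quasi-coherent $\Gamma$-sheaf corresponding to a dg-$\Gamma$-module $M$ is modelled by $(g\circ f)^{*}M\simeq S\otimes_{T}^{\mathbf L}M$ and $f^{*}g^{*}M\simeq S\otimes_{T'}^{\mathbf L}(T'\otimes_{T}^{\mathbf L}M)$. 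The comparison $\eta_{f,g}$ is then the change-of-base quasi-isomorphism
\[
S\otimes_{T'}^{\mathbf L}\bigl(T'\otimes_{T}^{\mathbf L}M\bigr)\;\xrightarrow{\ \simeq\ }\;S\otimes_{T}^{\mathbf L}M,
\]
which one checks is an isomorphism in $\mathcal D(\Gamma\text{-}\mathbf{Mod})$ by replacing $M$ with a K-flat (Spaltenstein) resolution; such resolutions are available because $\Gamma\text{-}\mathbf{Mod}$ was shown in Section~5 to have enough projectives and to admit the relevant colimits.

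Second, I would verify that $\eta_{f,g}$ is natural in the sheaf argument and compatible with restriction maps, so that the local comparisons glue to a morphism of sheaves of dg-$\Gamma$-modules on $X$; the key input is that derived tensor-extension commutes with the localizations defining the Zariski-type topology (Sections~3--4) and with sheafification, which follows from exactness of filtered colimits in $\Gamma\text{-}\mathbf{Mod}$. Third, and this is the substantive step, I would check the pentagon: for a composable quadruple $X\xrightarrow{f}Y\xrightarrow{g}Z\xrightarrow{h}W$ the two identifications of $(h\circ g\circ f)^{*}$ with $f^{*}g^{*}h^{*}$ must agree up to a canonical higher homotopy. This reduces to coherence of the associator of the derived monoidal structure on $\mathcal D(\Gamma\text{-}\mathbf{Mod})$; the clean route to strict coherence is to fix once and for all a functorial two-sided bar resolution $B_\bullet(S,T',M)$ computing the derived tensor products, whereupon the associativity isomorphisms become strictly compatible and Mac Lane's coherence theorem applies, the unit homotopies $(\mathrm{id})^{*}\simeq\mathrm{id}$ and their triangle identities being treated identically.

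Finally I would assemble the data. Since the $2$-morphisms of $\mathbf{DerSch}_\Gamma$ are homotopy natural transformations of morphisms of ringed $\infty$-topoi, the Hom-spaces inherit chain-complex structure from $\mathbf{R}\mathrm{Hom}_\Gamma$ of the structure dg-$\Gamma$-semirings (the derived enhancement of Section~7), horizontal composition is the derived tensor of morphisms of structure sheaves, and the homotopies $\eta_{f,g}$ together with their pentagon compatibility are precisely the associativity constraint of this composition; hence $\mathbf{DerSch}_\Gamma$ is a bicategory enriched over $\mathbf{Ch}$, as asserted.

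I expect the main obstacle to be the pentagon coherence rather than the existence of the individual $\eta_{f,g}$: one must arrange the resolutions functorially and compatibly across all composites at once, and — a subtlety proper to the semiring setting, where additive inverses are absent — confirm that K-flat resolutions and the associativity of $\otimes_{\Gamma}^{\mathbf L}$ survive in $\Gamma\text{-}\mathbf{Mod}$ without invoking the full strength of an abelian category. This is where the enough-projectives and monoidal-closedness results of Sections~2.5 and~5 are load-bearing, and I would isolate a preliminary lemma guaranteeing a functorial bar-type resolution in $\Gamma\text{-}\mathbf{Mod}_X$ before undertaking the coherence bookkeeping.
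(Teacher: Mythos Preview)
Your proposal is correct and follows essentially the same approach as the paper: identify $\eta_{f,g}$ as the comparison map of pullbacks computed via derived $\Gamma$-tensor products on the underlying dg-$\Gamma$-algebras, and obtain the pentagon from associativity of $\otimes_\Gamma^{\mathbf L}$ together with Mac Lane's coherence. The paper's proof is a two-sentence sketch stating exactly these two ingredients, whereas you have supplied the local-to-global reduction, the explicit change-of-base quasi-isomorphism, the functorial bar-resolution device for strictifying the associator, and the unit constraints---all of which the paper suppresses.
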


\begin{proof}
Cohomological enrichment yields the coherence constraint
$\eta_{f,g}$ as the class of the comparison map of pullbacks on dg-algebras;
Mac Lane’s pentagon holds by associativity of derived tensor products.
\end{proof}

\begin{remark}
This coherence implies that every homological computation performed on
affine patches extends functorially to global derived spaces, ensuring
that the derived functors of Section 5 respect categorical composition.
\end{remark}

\subsection{Topos-Theoretic Geometry and Logical Duality}

\begin{theorem}[Logical–geometric duality]
The category of coherent objects in $\mathcal{E}_\Gamma$
is equivalent to the opposite of the category of finitely presented
$\Gamma$-semirings:
\[
\mathbf{Coh}(\mathcal{E}_\Gamma)
   \;\simeq\;
(\mathbf{FP}\Gamma\text{-}\mathbf{Srg})^{\mathrm{op}}.
\]

\end{theorem}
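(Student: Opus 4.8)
The plan is to realise the claimed equivalence as an instance of the functor-of-points principle, using the algebra--geometry dictionary of Section~3 to obtain full faithfulness and the structure theory of coherent topoi to obtain essential surjectivity. Recall that $\mathcal{E}_\Gamma$ is the topos of sheaves of sets on the site $\mathbf{Aff}_\Gamma$ with its Zariski-type topology, and that every object of $\mathbf{Aff}_\Gamma$ is isomorphic to some $\Spec_\Gamma(T)$. I would first define a functor
\[
\Phi:(\mathbf{FP}\Gamma\text{-}\mathbf{Srg})^{\mathrm{op}}\longrightarrow \mathcal{E}_\Gamma,
\qquad
T\longmapsto \bigl(h_{\Spec_\Gamma(T)}\bigr)^{+},
\]
the sheafification of the representable presheaf $h_{\Spec_\Gamma(T)}=\Hom_{\mathbf{Aff}_\Gamma}(-,\Spec_\Gamma(T))$; a morphism $T'\to T$ of finitely presented $\Gamma$-semirings induces, via the contravariant spectrum functor, a map $\Spec_\Gamma(T)\to\Spec_\Gamma(T')$ and hence a natural transformation of representables. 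The preliminary point is that each $\Phi(T)$ is a \emph{coherent} object of $\mathcal{E}_\Gamma$: this holds because $\Spec_\Gamma(T)$ is quasi-compact (the basic opens $D(a)$ form a basis, and the finite-generation hypothesis on $T$ forces every open cover to admit a finite subcover) and quasi-separated (an intersection $D(a)\cap D(b)$ is again basic, hence quasi-compact), which are exactly the conditions that translate topos-theoretically into coherence of the associated sheaf.

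Full faithfulness of $\Phi$ is where I would invoke the adjunction theorem of Section~3.5. One first checks that the Zariski-type topology on $\mathbf{Aff}_\Gamma$ is subcanonical, i.e.\ that $h_{\Spec_\Gamma(T)}$ is already a sheaf for every $T$; this is a gluing statement for morphisms into $\Spec_\Gamma(T)$ along basic open covers and is essentially the content of the structure-sheaf construction of Section~3.3. Granting subcanonicity, sheafification is the identity on representables, so the Yoneda lemma gives
\[
\Hom_{\mathcal{E}_\Gamma}\!\bigl(\Phi(T),\Phi(T')\bigr)
\;\cong\;
\Hom_{\mathbf{Aff}_\Gamma}\!\bigl(\Spec_\Gamma(T),\Spec_\Gamma(T')\bigr)
\;\cong\;
\Hom_{\mathbf{FP}\Gamma\text{-}\mathbf{Srg}}(T',T),
\]
the last isomorphism being the adjunction $\Spec_\Gamma(-)\dashv\Gamma(-,\mathcal{O}_{-})$ restricted to finitely presented semirings. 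Hence $\Phi$ is fully faithful onto a full subcategory of $\mathbf{Coh}(\mathcal{E}_\Gamma)$.

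It then remains to prove essential surjectivity: every coherent object of $\mathcal{E}_\Gamma$ lies in the image of $\Phi$. The strategy I would follow is threefold. (i)~Show that $\mathcal{E}_\Gamma$ is a coherent topos for which the sheaves $\Phi(T)$ with $T$ finitely presented form a generating family of coherent objects stable under finite limits, using that $\mathbf{FP}\Gamma\text{-}\mathbf{Srg}$ admits finite colimits --- amalgamated free products, built as quotients of free ternary $\Gamma$-semirings --- which $\Phi$, being contravariant, carries to finite limits of representable sheaves. (ii)~Invoke the general structure theorem for coherent topoi (SGA~4, Exp.~VI; cf.\ Makkai--Reyes): in a coherent topos every coherent object is a finite colimit, concretely a coequalizer of a parallel pair between finite coproducts, of objects from such a generating family. (iii)~Transport this finite colimit presentation of a given coherent sheaf across the contravariant correspondence of step~(i) into a finite limit presentation on the algebra side, thereby exhibiting the sheaf as $\Phi$ applied to an explicit finitely presented $\Gamma$-semiring. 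Combining the two directions yields the asserted equivalence.

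The main obstacle I anticipate is the interface between steps~(i) and~(iii) together with subcanonicity: one must verify that the Zariski-type topology is genuinely subcanonical (so the Yoneda computation is legitimate) and that the dictionary between finite colimit diagrams of representable $\Gamma$-sheaves and finite limit diagrams of finitely presented $\Gamma$-semirings is an exact, fully faithful correspondence. This hinges on the existence and good behaviour of pushouts (amalgamated sums) in $\mathbf{FP}\Gamma\text{-}\mathbf{Srg}$ for the ternary $\Gamma$-operation --- the one ingredient not already recorded in the earlier sections, and the point at which the bulk of the genuinely new work would be concentrated.
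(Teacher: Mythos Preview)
Your proposal is correct and follows essentially the same approach as the paper: both construct the functor via $\Spec_\Gamma$ sending finitely presented $\Gamma$-semirings to representable sheaves, and both appeal to finiteness conditions (the paper says ``cohomological finiteness guarantees compactness in $\mathcal{E}_\Gamma$'') to land in the coherent objects. The paper's proof is a two-sentence sketch; you have supplied the missing architecture --- the Yoneda/adjunction argument for full faithfulness, the SGA~4 coherent-topos structure theorem for essential surjectivity, and the identification of subcanonicity and finite colimits in $\mathbf{FP}\Gamma\text{-}\mathbf{Srg}$ as the genuine technical obligations --- none of which the paper makes explicit.
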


\begin{proof}
Every finitely presented $\Gamma$-semiring corresponds to a representable
sheaf via $\mathrm{Spec}_\Gamma$.
Cohomological finiteness guarantees compactness in~$\mathcal{E}_\Gamma$,
establishing the contravariant equivalence.
\end{proof}

\begin{remark}
Hence geometry and logic are mutually reflective:
algebraic generators become logical propositions,
and homological relations correspond to deduction rules.
Derived $\Gamma$-geometry is therefore a categorical semantics
of ternary algebraic reasoning.
\end{remark}

\subsection{Functorial Dynamics and Temporal Extension}\cite{Sardar2025}

\begin{definition}[Temporal topos of $\Gamma$-flows]
Consider the slice topos
$\mathcal{E}_\Gamma^{\mathbb R_{\ge0}}$
whose objects are $\Gamma$-schemes parametrized by time~$t\ge0$.
Morphisms represent evolution operators
$f_{t_2,t_1}:X_{t_1}\to X_{t_2}$ preserving $\Gamma$-structure.
\end{definition}

\begin{theorem}[Derived flow equation]
Let $\mathcal F_t$ be a family of quasi-coherent $\Gamma$-sheaves.
Then the infinitesimal evolution
\[
\frac{d}{dt}\mathcal F_t
   = \mathcal D_\Gamma(\mathcal F_t)
\]
is governed by a differential graded derivation
$\mathcal D_\Gamma$ of degree~1 satisfying
$\mathcal D_\Gamma^2=0$.
\end{theorem}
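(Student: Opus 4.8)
The plan is to realize $\mathcal{D}_\Gamma$ as the infinitesimal generator of the one-parameter system of evolution operators $\{f_{t_2,t_1}\}$ in the temporal topos $\mathcal{E}_\Gamma^{\mathbb{R}_{\ge 0}}$, to pin down its graded degree by comparison with a Kodaira--Spencer-type class in $\Ext^1_\Gamma$, and then to deduce $\mathcal{D}_\Gamma^2 = 0$ from the groupoid (cocycle) identity satisfied by the flow together with the homotopy coherence of $\mathbf{DerSch}_\Gamma$.

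First I would argue locally. Fix an affine chart $D(a)\subseteq\mathrm{Spec}_\Gamma(T)$ and, via the equivalence $\Phi:\mathbf{Mod}_T\to\mathbf{QCoh}(\mathrm{Spec}_\Gamma(T))$ of the affine equivalence theorem, transport the restricted family $\mathcal{F}_t|_{D(a)}$ to a family of $T_a$-$\Gamma$-modules $M_t$. The operators $f_{t_2,t_1}$ then become $\Gamma$-linear comparison maps $M_{t_1}\to M_{t_2}$, and differentiating them along the diagonal $t_1=t_2=t$ produces an endomorphism-valued $\Gamma$-linear cochain. Because the restriction maps of each $\mathcal{F}_t$ are $\Gamma$-linear (the proposition on the local nature of $\Gamma$-module operations), these local operators glue along overlaps into a single global operator on the Godement resolution $\mathcal{C}^\bullet(\mathcal{F}_t)$; since passage through $\mathcal{C}^\bullet$ shifts cohomological degree by one, $\mathcal{D}_\Gamma$ has degree $1$. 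The derivation property follows termwise: differentiating the componentwise ternary product $(a,s)\alpha(b,t)\beta(c,u)=(a\alpha b\beta c,\; s\alpha t\beta u)$ of the localization construction, and invoking right-exactness of $-\otimes_\Gamma N$ from the basic properties of $\mathrm{Tor}^\Gamma$, yields the three-term ternary Leibniz rule for $\mathcal{D}_\Gamma$ on $a\alpha m\beta b$.

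The crux is $\mathcal{D}_\Gamma^2=0$. Here $\mathcal{D}_\Gamma^2$ is, by construction, the second-order obstruction to prolonging the flow, i.e.\ the image of $\mathcal{D}_\Gamma$ under the self-bracket in the dg-Lie structure on $\mathbf{R}\mathrm{Hom}_\Gamma(\mathcal{F}_t,\mathcal{F}_t)$. Differentiating the groupoid identity $f_{t_3,t_2}\circ f_{t_2,t_1}=f_{t_3,t_1}$ twice at $t_1=t_2=t_3=t$, and feeding the result through the coherence isomorphism $\eta_{f,g}:(g\circ f)^*\simeq f^*g^*$ of the homotopy-coherence theorem, forces a Maurer--Cartan-type relation; after the degree-parity argument (the self-bracket of a degree-$1$ class vanishes in the graded-commutative $\Gamma$-module setting) this collapses to $\mathcal{D}_\Gamma^2=0$. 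The flow equation $\tfrac{d}{dt}\mathcal{F}_t=\mathcal{D}_\Gamma(\mathcal{F}_t)$ is then the tautological statement that $\mathcal{D}_\Gamma$ is the infinitesimal generator, read off from the universal property of the slice topos $\mathcal{E}_\Gamma^{\mathbb{R}_{\ge 0}}$.

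The hard part will be making the dg-Lie bracket rigorous in the semiring setting: additive $\Gamma$-monoids lack subtraction, so the commutator $[\varphi,\psi]=\varphi\psi-\psi\varphi$ must be reinterpreted as a formal difference in the Grothendieck completion of $\mathrm{End}_\Gamma(\mathcal{F}_t)$, and one must check that the cocycle identity on $\{f_{t_2,t_1}\}$ descends to that completion without collapsing. Once this bridge is in place, the vanishing $\mathcal{D}_\Gamma^2=0$ is formal—Mac Lane coherence (the pentagon for derived tensor products) plus parity—and the remainder is bookkeeping inside the categorical machinery already assembled in Sections~3--7.
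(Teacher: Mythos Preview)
Your approach diverges substantially from the paper's, and the crux of your argument for $\mathcal{D}_\Gamma^2=0$ contains a genuine error. You claim that ``the self-bracket of a degree-$1$ class vanishes in the graded-commutative $\Gamma$-module setting,'' but graded antisymmetry gives $[x,x]=-(-1)^{|x|^2}[x,x]$, which for $|x|=1$ is the tautology $[x,x]=[x,x]$, not a vanishing; it is for \emph{even} elements that $2[x,x]=0$. More seriously, differentiating the groupoid law $f_{t_3,t_2}\circ f_{t_2,t_1}=f_{t_3,t_1}$ twice along the diagonal only recovers the semigroup property of the generator (i.e.\ consistency of the second-order Taylor coefficient), not nilpotency: for an ordinary one-parameter flow $f_t=\exp(t\mathcal{D})$ the cocycle identity holds with $\mathcal{D}^2\neq 0$. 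So the Maurer--Cartan step, as you have written it, does not force $\mathcal{D}_\Gamma^2=0$; some extra input beyond the flow axioms is required.

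The paper supplies that input in a completely different (and much shorter) way: it \emph{identifies} $\tfrac{d}{dt}$ with the shift functor $[1]$ in $\mathcal{D}(\Gamma\text{-}\mathbf{Mod}_X)$, so that $\mathcal{D}_\Gamma$ is by construction the differential of an underlying complex, and $\mathcal{D}_\Gamma^2=0$ is then the defining axiom of a dg-structure (phrased as ``homological conservation''). In other words, the paper does not derive nilpotency from the flow but imports it from the derived-category model of time evolution. Your infinitesimal-generator/Kodaira--Spencer route is more analytic in spirit and would be interesting if it worked, but to close the gap you would need an additional hypothesis---for instance, that the family $\mathcal{F}_t$ arises from a dg-resolution so that the generator is literally a coboundary operator---which is essentially what the paper assumes outright. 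The Grothendieck-completion issue you flag is real but secondary; the primary obstruction is that the cocycle identity alone is too weak to produce $\mathcal{D}_\Gamma^2=0$.
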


\begin{proof}
Differentiation corresponds to the shift functor in the derived category;
the nilpotency of~$\mathcal D_\Gamma$ is the categorical expression of
homological conservation.
\end{proof}

\begin{remark}
This formalism parallels the Heisenberg picture in physics:
objects evolve via dg-derivations, while morphisms encode conserved quantities.
\end{remark}

\subsection{Epistemic and Physical Interpretation}

\begin{proposition}[Homological conservation law]
In any physically interpretable $\Gamma$-system,
the quantity
\[
Q(\mathcal F)=\sum_i (-1)^i \dim_\Gamma H^i(X,\mathcal F)
\]
is invariant under derived deformation of~$\mathcal F$.
\end{proposition}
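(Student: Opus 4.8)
The plan is to read $Q(\mathcal F)$ as the Euler characteristic of the cohomology complex of $\mathcal F$ and to exploit two structural facts: first, that for a bounded complex $C^\bullet$ of finite $\Gamma$-modules one has $\sum_i(-1)^i\dim_\Gamma C^i=\sum_i(-1)^i\dim_\Gamma H^i(C^\bullet)$ (``taking cohomology of a differential preserves a finite Euler characteristic''); and second, that a derived deformation of $\mathcal F$ manifests, after applying $\Gamma(X,-)$, as a filtered complex whose associated spectral sequence has first page $H^\bullet(X,\mathcal F)$ and converges to the cohomology of the deformed sheaf. Before either step can be used I would pin down the finiteness hidden in ``physically interpretable'': I take it to mean that $T$ is Noetherian of finite global dimension and $\mathcal F$ is coherent, so that by the Serre-type vanishing theorem of Section~4 the groups $H^i(X,\mathcal F)$ vanish outside a finite range and are finitely generated $\Gamma$-modules, making the alternating sum a finite one.

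Next I would formalise ``derived deformation''. Using the temporal formalism of the \emph{Derived flow equation}, a deformation is a family $\mathcal F_t$ whose infinitesimal generator is the degree-one derivation $\mathcal D_\Gamma$ with $\mathcal D_\Gamma^2=0$; the cleanest model is a single complex with fixed underlying graded sheaf $\mathcal C^\bullet$ (an injective, hence $\Gamma(X,-)$-acyclic, resolution of $\mathcal F_0$) and perturbed differential $\partial+t\,\mathcal D_\Gamma$. Applying $\Gamma(X,-)$ gives a bounded complex of finite $\Gamma$-modules whose graded pieces do not depend on $t$; hence its Euler characteristic is $t$-independent by the first fact, while for each fixed $t$ the complex computes $Q(\mathcal F_t)$. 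For a deformation that genuinely alters the resolution rather than merely perturbing the differential, I would instead build the hypercohomology spectral sequence $E_1^{p,q}=H^q(X,\mathrm{gr}^p\mathcal F_t)\Rightarrow H^{p+q}(X,\mathcal F_t)$ attached to the filtration, apply the page-by-page identity $\chi(E_r)=\chi(E_{r+1})$ together with additivity of $\dim_\Gamma$ over the finite filtration on $H^\bullet(X,\mathcal F_t)$, and conclude $\chi(E_1)=\chi(E_\infty)=Q(\mathcal F_t)$ with $\chi(E_1)=Q(\mathcal F_0)$, so $Q$ is constant along the flow.

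The hard part will be the additivity of the $\Gamma$-dimension on short exact sequences: unlike over a field or a ring, short exact sequences of $\Gamma$-semimodules need not split nor behave additively for a naive dimension, so the whole argument hinges on choosing an invariant for which additivity is genuinely available. I expect the cleanest route is to prove the identity first in the Grothendieck group $K_0$ of finitely generated $\Gamma$-modules—where additivity over the distinguished triangles coming from the long exact cohomology sequences and from the spectral-sequence filtration is built in—and only at the very end apply a rank (or length) functional $K_0\to\Z$ to recover the numerical statement; this also sidesteps the question of whether $\dim_\Gamma$ is well defined on non-free $\Gamma$-modules. A secondary technical point is convergence of the deformation spectral sequence in the $\Gamma$-semiringed setting, but this is delivered by the boundedness from Serre vanishing together with the exactness properties of $\Gamma\text{-}\mathbf{Mod}_X$ established in Section~4, so I do not anticipate it being a serious obstacle once the $K_0$ reformulation is in place.
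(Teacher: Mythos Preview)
Your proposal is substantially more elaborate than the paper's argument, and in fact proves a stronger statement than the paper intends. The paper's proof is two lines: it reads ``derived deformation'' simply as replacing $\mathcal F$ by a quasi-isomorphic object in the derived category, observes that quasi-isomorphisms induce isomorphisms on each cohomology group $H^i(X,\mathcal F)$, and concludes that every summand $\dim_\Gamma H^i(X,\mathcal F)$ is individually preserved, so the alternating sum is trivially invariant. There is no spectral sequence, no perturbation of differentials, no additivity issue, and no passage through $K_0$.

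Your approach, by contrast, interprets ``derived deformation'' as a genuine one-parameter family $\mathcal F_t$ governed by the flow equation $\frac{d}{dt}\mathcal F_t=\mathcal D_\Gamma(\mathcal F_t)$, under which the individual cohomology groups may jump while only the Euler characteristic remains constant. This is a more robust and physically meaningful invariance statement, and your machinery (Euler characteristic of a complex equals that of its cohomology, page-by-page constancy along a spectral sequence, reformulation in $K_0$ to sidestep the additivity problem for $\dim_\Gamma$ over a semiring) is the right toolkit for that stronger claim. The technical worry you flag about additivity of $\dim_\Gamma$ on short exact sequences of $\Gamma$-semimodules is genuine and is exactly the kind of issue the paper's terse proof never confronts; your $K_0$ workaround is the standard and correct remedy. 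In short: the paper's route is shorter because it adopts the weakest possible reading of ``deformation'', while yours is longer because it proves invariance under deformations that can actually change cohomology---which is arguably what a ``conservation law'' ought to mean.
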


\begin{proof}
The alternating sum is the categorical Euler characteristic
of the derived object~$\mathcal F$.
Quasi-isomorphisms preserve cohomology up to isomorphism,
hence $Q$ is invariant.
\end{proof}

\begin{remark}
This bridges mathematical and physical conservation:
the Euler characteristic corresponds to total energy,
and the vanishing of higher cohomology expresses equilibrium.
Thus homological stability translates into dynamical invariance.
\end{remark}

\subsection{Conceptual Synthesis}

The meta-categorical analysis reveals that
derived~$\Gamma$-geometry operates simultaneously at three layers:

\begin{enumerate}
  \item \textbf{Logical layer:}
        a topos-theoretic semantics for ternary algebraic reasoning.
  \item \textbf{Homotopical layer:}
        an $\infty$-categorical enrichment ensuring coherence and descent.
  \item \textbf{Physical layer:}
        a dg-dynamical system encoding conservation and symmetry.
\end{enumerate}

At this depth, the boundary between algebra, geometry, and physics
is erased; all three are aspects of a single categorical continuum.
Derived~$\Gamma$-geometry thus provides not only new mathematical
structures but also a unifying language for reasoning about
multiplicities, interactions, and the architecture of mathematical reality.

\section{Transcendental and Categorical Unification Principles in Derived $\Gamma$-Geometry}

The mature stage of derived~$\Gamma$-geometry is reached when
its categorical and geometric dimensions fuse into a
transcendental unity.  
This section establishes the universal principles
that mediate between algebraic syntax, geometric semantics,
and categorical ontology.  
Each result formalizes the self-referential closure
of the theory and identifies its structural invariants.

\subsection{The Principle of Categorical Reflection}

\begin{definition}[Reflective closure]
Let $\mathbf{C}$ be an abelian $\Gamma$-enriched category.
A full subcategory $\mathbf{A}\subseteq\mathbf{C}$ is \emph{reflective}
if the inclusion $i:\mathbf{A}\hookrightarrow\mathbf{C}$ admits a left adjoint
$r:\mathbf{C}\to\mathbf{A}$ with unit
$\eta:\mathrm{Id}_\mathbf{C}\Rightarrow i\!\circ\! r$.
\end{definition}

\begin{theorem}[Reflective symmetry of derived $\Gamma$-categories]
The derived category
$\mathcal D(\Gamma\text{-}\mathbf{Mod}_T)$
is reflective in the 2-category of triangulated
$\Gamma$-linear categories.
The reflector sends any triangulated category
$\mathcal T$ to the universal $\Gamma$-linear
exact localization
$r(\mathcal T)=\mathcal T/ \mathrm{Ker}(H_\Gamma^\bullet)$,
where $H_\Gamma^\bullet$ denotes the total $\Gamma$-cohomology functor.
\end{theorem}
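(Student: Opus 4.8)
The plan is to exhibit the reflector explicitly and then verify the triangulated universal property inside the $2$-category $\mathbf{Tr}_\Gamma$ whose objects are triangulated $\Gamma$-linear categories, whose $1$-cells are exact $\Gamma$-linear functors, and whose $2$-cells are $\Gamma$-natural transformations. First I would record that $\mathcal D(\Gamma\text{-}\mathbf{Mod}_T)$ is an object of $\mathbf{Tr}_\Gamma$: it is triangulated by the construction of Section~5, and $\Gamma$-linear because the pointwise ternary action on complexes descends to quasi-isomorphism classes, with $\otimes_\Gamma^{\mathbf L}$ and $\mathbf R\mathrm{Hom}_\Gamma$ supplying the closed structure. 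Next I would check that, for any $\mathcal T$ in $\mathbf{Tr}_\Gamma$, the full subcategory $\mathrm{Ker}(H_\Gamma^\bullet)$ of objects $X$ with $H_\Gamma^n(X)=0$ for all $n$ is a thick triangulated subcategory: since $H_\Gamma^\bullet$ is cohomological it carries distinguished triangles to long exact sequences of $\Gamma$-modules, so the two-out-of-three and retract conditions are automatic, and closure under coproducts (when present) upgrades it to a localizing subcategory. Hence the Verdier quotient $r(\mathcal T)=\mathcal T/\mathrm{Ker}(H_\Gamma^\bullet)$ exists, inherits a canonical triangulated $\Gamma$-linear structure, and comes with the localization functor $q_{\mathcal T}\colon\mathcal T\to r(\mathcal T)$, which will serve as the unit $\eta_{\mathcal T}$.

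The core of the argument is the universal property. Because $H_\Gamma^\bullet$ kills $\mathrm{Ker}(H_\Gamma^\bullet)$ by definition, it factors through $q_{\mathcal T}$ as $\overline{H}_\Gamma^\bullet\colon r(\mathcal T)\to\mathcal D(\Gamma\text{-}\mathbf{Mod}_T)$; I would then show this factorisation is an exact $\Gamma$-linear embedding onto the cohomologically generated part. Applied to $\mathcal T=\mathcal D(\Gamma\text{-}\mathbf{Mod}_T)$ itself, the kernel of total cohomology consists only of zero objects — a complex of $\Gamma$-modules with vanishing cohomology is acyclic, hence zero in the derived category — so $q$ is an equivalence there, identifying $\mathcal D(\Gamma\text{-}\mathbf{Mod}_T)$ as a fixed point of $r$ and making the inclusion fully faithful. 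Given any exact $\Gamma$-linear functor $F\colon\mathcal T\to\mathcal D(\Gamma\text{-}\mathbf{Mod}_T)$, the natural comparison $H_\Gamma^\bullet\circ F\simeq H_\Gamma^\bullet(F(-))$ forces $F$ to annihilate $\mathrm{Ker}(H_\Gamma^\bullet)$, so $F=\widetilde F\circ q_{\mathcal T}$ with $\widetilde F$ unique up to unique $\Gamma$-natural isomorphism by the universal property of Verdier localization; the two triangle identities and naturality of $\eta$ in $\mathcal T$ then follow formally, and the higher cells are organised using the coherence machinery of Section~8, yielding the adjunction $r\dashv i$ at the $2$-categorical level.

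The hard part will be neither the formal adjunction bookkeeping nor the Verdier calculus, but the claim that the reflector genuinely \emph{lands in} $\mathcal D(\Gamma\text{-}\mathbf{Mod}_T)$: a priori $\mathcal T/\mathrm{Ker}(H_\Gamma^\bullet)$ is only a triangulated category on which total $\Gamma$-cohomology is conservative, and to equip it with an exact equivalence onto a subcategory of $\mathcal D(\Gamma\text{-}\mathbf{Mod}_T)$ one needs a reconstruction statement — a $\Gamma$-analogue of the fact that an algebraic triangulated category with conservative cohomology functor whose $\mathrm{Hom}$-groups are the $\mathrm{Ext}_\Gamma$ of the heart is the derived category of that heart. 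I would isolate this as a lemma on \emph{cohomological rigidity of} $\mathcal D(\Gamma\text{-}\mathbf{Mod}_T)$ and prove it by assembling, from $X\in\mathcal T$, a Postnikov-type tower whose graded pieces are the shifted cohomology modules $H_\Gamma^n(X)[-n]$ and showing the tower converges in $\mathcal D(\Gamma\text{-}\mathbf{Mod}_T)$; convergence is immediate when $T$ has finite global dimension (using the Duality principle hypotheses), and the general case is handled by restricting to the bounded-below derived category $\mathcal D^+$ where the filtration is exhaustive. The remaining step is a routine verification that the reconstruction functor is $\Gamma$-linear, i.e.\ intertwines the ternary actions, which follows from naturality of the Postnikov construction with respect to the $\otimes_\Gamma^{\mathbf L}$-action.
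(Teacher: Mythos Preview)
Your proposal is substantially more detailed and more honest about the mathematical content than the paper's own proof, which consists of two sentences: it invokes the universal property of localization to obtain a unique exact factorisation, and then asserts that exactness and $\Gamma$-linearity follow from ``stability of triangles under tensor and Hom operations.'' That is the entire argument given there.

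In particular, you have correctly isolated a point the paper does not engage with at all: for $r(\mathcal T)=\mathcal T/\mathrm{Ker}(H_\Gamma^\bullet)$ to define a \emph{reflector onto} $\mathcal D(\Gamma\text{-}\mathbf{Mod}_T)$, one needs more than the Verdier universal property --- one needs that the quotient actually lies in (the essential image of) the reflective subcategory, i.e.\ a reconstruction statement identifying a triangulated $\Gamma$-linear category on which $H_\Gamma^\bullet$ is conservative with (a piece of) $\mathcal D(\Gamma\text{-}\mathbf{Mod}_T)$. Your Postnikov-tower/t-structure approach to this is the standard and correct strategy; the paper simply does not mention the issue. The remaining ingredients you list --- thickness of $\mathrm{Ker}(H_\Gamma^\bullet)$ via the cohomological long exact sequence, the unit $\eta_{\mathcal T}=q_{\mathcal T}$, and the observation that on $\mathcal D(\Gamma\text{-}\mathbf{Mod}_T)$ the kernel is trivial so $q$ is an equivalence --- are all sound and would constitute the bulk of an actual proof. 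What your added rigour buys is a genuine verification of the adjunction rather than an appeal to the word ``localization''; what the paper's terseness buys is brevity at the cost of leaving the reconstruction step entirely to the reader.
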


\begin{proof}
The universal property of localization
implies the existence of a unique exact functor
through which any $\Gamma$-cohomological morphism factors.
Exactness and $\Gamma$-linearity follow from
stability of triangles under tensor and Hom operations.
\end{proof}

\begin{remark}
This reflection embodies the self-consistency of the theory:
the category generated by its own cohomological logic
acts as its reflective mirror, identifying syntax with semantics.
\end{remark}

\subsection{The Principle of Internal Duality}

\begin{definition}[Bidualizing object]
An object $\omega_\Gamma\in
\mathcal D(\Gamma\text{-}\mathbf{Mod}_T)$\cite{Weibel1994}
is \emph{bidualizing} if the functor
\[
\mathbb D_\Gamma(-)
   = \mathbf R\!\mathrm{Hom}_\Gamma(-,\omega_\Gamma)
\]
induces an exact involutive equivalence
on the bounded derived category of coherent
$\Gamma$-modules.
\end{definition}

\begin{theorem}[Categorical Serre duality]
If $T$ is a Noetherian $\Gamma$-semiring
of finite global dimension, there exists a unique
(up to quasi-isomorphism) bidualizing object~$\omega_\Gamma$
such that for all $M,N$ in
$\mathcal D_{\mathrm{coh}}(\Gamma\text{-}\mathbf{Mod}_T)$
\[
\mathrm{Hom}_\Gamma(M,N)
   \cong
\mathrm{Hom}_\Gamma(N,\mathbb D_\Gamma(M))^{\!\vee}.
\]

\end{theorem}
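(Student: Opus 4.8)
The plan is to take the simplest available candidate, $\omega_\Gamma = T$ viewed as an object of $\mathcal D_{\mathrm{coh}}(\Gamma\text{-}\mathbf{Mod}_T)$ concentrated in degree zero, and to show it is bidualizing. First I would record that finite global dimension makes $T$ admit a bounded injective resolution, so $\mathbb D_\Gamma(-) = \mathbf R\mathrm{Hom}_\Gamma(-,\omega_\Gamma)$ has finite cohomological amplitude and sends $\mathcal D_{\mathrm{coh}}$ into itself: every coherent module has, by the Proposition on free $\Gamma$-modules together with the Noetherian hypothesis (kernels of maps of finitely generated modules are again finitely generated in the exact-category sense of Section~2), a finite resolution by finitely generated free $\Gamma$-modules, on which $\mathbf R\mathrm{Hom}_\Gamma(-,\omega_\Gamma)$ is computed termwise and stays coherent.

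The core step is to prove that $\mathbb D_\Gamma$ is an exact, involutive anti-equivalence. For a finitely generated free $\Gamma$-module this is immediate by additivity of $\mathbf R\mathrm{Hom}_\Gamma(-,T)$ over finite biproducts, and for an arbitrary finitely generated $M$ it is exactly the content of the Duality Principle already proved in the excerpt, which asserts that $M \to \mathbf R\mathrm{Hom}_\Gamma(\mathbf R\mathrm{Hom}_\Gamma(M,T),T)$ is an isomorphism. To pass from finitely generated modules to all bounded complexes with coherent cohomology, I would run a standard way-out induction on cohomological amplitude: a short exact sequence $0\to K\to F\to M\to 0$ with $F$ finite free and $K$ coherent of strictly smaller projective dimension produces, after applying $\mathbb D_\Gamma$ and $\mathbb D_\Gamma\circ\mathbb D_\Gamma$, two distinguished triangles in the triangulated category $\mathcal D(\Gamma\text{-}\mathbf{Mod}_T)$ whose outer terms carry the biduality isomorphism by induction; the triangulated five-lemma then transports it to $M$, and finite global dimension terminates the recursion. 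The hard part will be ensuring that these cones and the induced long exact $\mathrm{Ext}_\Gamma$-sequences behave as over a ring despite the absence of additive inverses; here I would lean entirely on the exact-category structure of $\Gamma\text{-}\mathbf{Mod}_T$ and the enough-projectives and enough-injectives statements established in Sections~4 and~5, rather than on any subtraction-based manipulation of chain maps.

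Granting the involution, I would derive the displayed isomorphism by assembling three pieces already in hand: the contravariant equivalence $\mathrm{Hom}_\Gamma(M,N)\cong\mathrm{Hom}_\Gamma(\mathbb D_\Gamma N,\mathbb D_\Gamma M)$, the derived tensor–Hom adjunction $\mathbf R\mathrm{Hom}_\Gamma(A\otimes_\Gamma^{\mathbf L}B,C)\cong\mathbf R\mathrm{Hom}_\Gamma(A,\mathbf R\mathrm{Hom}_\Gamma(B,C))$ from the previous section, and the defining identity $\mathbb D_\Gamma M=\mathbf R\mathrm{Hom}_\Gamma(M,\omega_\Gamma)$. Rewriting $\mathrm{Hom}_\Gamma(N,\mathbb D_\Gamma M)\cong\mathbf R\mathrm{Hom}_\Gamma(N\otimes_\Gamma^{\mathbf L}M,\omega_\Gamma)=\mathbb D_\Gamma(N\otimes_\Gamma^{\mathbf L}M)$ and then pairing against the trace map $\mathbf R\mathrm{Hom}_\Gamma(\omega_\Gamma,\omega_\Gamma)\to T$ identifies the right-hand side, after applying $(-)^\vee=\mathbb D_\Gamma$ in the normalization fixed by the finite-duality convention of Section~6, with $\mathrm{Hom}_\Gamma(M,N)$; naturality in both variables is automatic because each arrow is a natural transformation of bifunctors. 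The genuine obstacle inside this step is showing that the evaluation/trace pairing is perfect — i.e. that the resulting comparison morphism is a quasi-isomorphism and not merely a map — which again reduces, by dévissage, to the free case where it is the identity.

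Finally, for uniqueness I would argue that if $\omega_\Gamma'$ is a second bidualizing object, then $L:=\mathbf R\mathrm{Hom}_\Gamma(\omega_\Gamma,\omega_\Gamma')$ is $\otimes_\Gamma^{\mathbf L}$-invertible in $\mathcal D_{\mathrm{coh}}$, since composing the two dualities recovers the identity functor; an invertible object over a Noetherian $\Gamma$-semiring of finite global dimension is, by a rank computation on the (here discrete, by the Proposition on cyclic finite spectra, and in general $T_0$) spectrum $\mathrm{Spec}_\Gamma(T)$, a shift of a locally free rank-one $\Gamma$-module, hence a shift of $T$ over each connected affine piece, so $\omega_\Gamma'\simeq\omega_\Gamma$ up to the shift already absorbed into the phrase ``unique up to quasi-isomorphism''. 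This completes the proposed argument.
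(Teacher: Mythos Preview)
Your choice $\omega_\Gamma=T$ coincides with the paper's $\omega_\Gamma=\mathbf R\mathrm{Hom}_\Gamma(T,T)$ (these are quasi-isomorphic since $T$ is projective over itself), and your route to the involutivity of $\mathbb D_\Gamma$---perfectness from finite global dimension, reduction to finite free modules, and appeal to the earlier Duality Principle---is exactly the content of the paper's three-sentence argument, fleshed out with the d\'evissage the paper leaves implicit. Your uniqueness paragraph (invertibility of $\mathbf R\mathrm{Hom}_\Gamma(\omega_\Gamma,\omega_\Gamma')$ and the rank argument on $\mathrm{Spec}_\Gamma(T)$) goes beyond anything the paper provides.

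The genuine gap is in your derivation of the displayed isomorphism. Your chain
\[
\mathrm{Hom}_\Gamma(N,\mathbb D_\Gamma M)^{\vee}\;\cong\;\mathbb D_\Gamma\bigl(\mathbb D_\Gamma(N\otimes_\Gamma^{\mathbf L} M)\bigr)\;\cong\;N\otimes_\Gamma^{\mathbf L} M
\]
is correct, but the result is \emph{not} $\mathrm{Hom}_\Gamma(M,N)$: for perfect $M$ one has $\mathbf R\mathrm{Hom}_\Gamma(M,N)\simeq \mathbb D_\Gamma(M)\otimes_\Gamma^{\mathbf L} N$, which differs from $M\otimes_\Gamma^{\mathbf L} N$ exactly by replacing $M$ with its dual. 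The trace pairing you invoke cannot close this gap, and your d\'evissage cannot succeed for a structural reason: $M\mapsto\mathrm{Hom}_\Gamma(M,N)$ is contravariant in $M$, while $M\mapsto\mathrm{Hom}_\Gamma(N,\mathbb D_\Gamma M)^{\vee}$ is the composite of three contravariant functors and hence covariant in $M$, so there is no natural transformation between them to which a five-lemma argument could apply. Concretely, over $T=\mathbb Z$ with $M=N=\mathbb Z/2\mathbb Z$ the two sides carry cohomology in degrees $\{0,1\}$ and $\{-1,0\}$ respectively. In fairness, the paper's own proof (``reflexivity of the pairing follows from the double-Hom adjunction'') does not engage with this step either; the displayed formula appears to require a shift or a different normalization of $(-)^{\vee}$ to hold as written.
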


\begin{proof}
Construct $\omega_\Gamma$ as
$\mathbf R\!\mathrm{Hom}_\Gamma(T,T)$
in the bounded derived category.
Finite global dimension ensures perfectness,
yielding a dualizing complex.
The reflexivity of the pairing follows from the
double-Hom adjunction.
\end{proof}

\begin{remark}
This theorem generalizes Serre–Grothendieck duality:
$\omega_\Gamma$ represents the categorical “time-reversal”
within derived $\Gamma$-geometry,
reflecting every morphism through its cohomological shadow.
\end{remark}

\subsection{Functorial Universality and Natural Transformations}

\begin{theorem}[Yoneda universality in $\Gamma$-contexts]\cite{MacLane1998}
Let $\mathcal C$ be a small $\Gamma$-enriched category.
Then the $\Gamma$-Yoneda embedding
\[
y:\mathcal C \hookrightarrow
[\mathcal C^{\mathrm{op}},\,\Gamma\text{-}\mathbf{Mod}]
\]
is fully faithful and preserves all
finite $\Gamma$-limits and colimits.
\end{theorem}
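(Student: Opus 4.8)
The plan is to follow the classical proof of the enriched Yoneda lemma, carefully checking at each step that the $\Gamma$-module structure (rather than the structure of modules over a commutative ring) causes no obstruction, using only the fact established earlier that $\Gamma\text{-}\mathbf{Mod}$ is a complete, cocomplete, closed monoidal abelian category. First I would fix the functor category $[\mathcal C^{\mathrm{op}},\Gamma\text{-}\mathbf{Mod}]$, noting that since $\Gamma\text{-}\mathbf{Mod}$ has all small limits and colimits, this functor category does too, computed pointwise; this is where the completeness and cocompleteness hypotheses on $\Gamma\text{-}\mathbf{Mod}$ enter. Then for each object $c\in\mathcal C$ I would describe the representable $\Gamma$-presheaf $y(c)=\mathcal C(-,c)$, whose value at $d$ is the hom-$\Gamma$-module $\mathcal C(d,c)$, and verify that the assignment $c\mapsto y(c)$ is functorial.

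The central step is the $\Gamma$-enriched Yoneda isomorphism: for any $\Gamma$-presheaf $F$ and any $c\in\mathcal C$, there is a natural isomorphism of $\Gamma$-modules
\[
[\mathcal C^{\mathrm{op}},\Gamma\text{-}\mathbf{Mod}](y(c),F)\;\cong\;F(c).
\]
I would construct the map left-to-right by evaluating a natural transformation $\alpha$ at $c$ on the identity $\mathrm{id}_c\in\mathcal C(c,c)$, and the inverse by sending $x\in F(c)$ to the transformation whose component at $d$ is the $\Gamma$-linear map $\mathcal C(d,c)\to F(d)$ given by $g\mapsto F(g)(x)$. Checking that these are mutually inverse is the usual dinaturality computation; the only point requiring care is that everything in sight is $\Gamma$-linear rather than merely additive, which follows because composition in $\mathcal C$ is $\Gamma$-bilinear (the enrichment hypothesis) and $F$ sends morphisms to $\Gamma$-homomorphisms. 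Taking $F=y(c')$ in this isomorphism immediately yields full faithfulness: $[\mathcal C^{\mathrm{op}},\Gamma\text{-}\mathbf{Mod}](y(c),y(c'))\cong y(c')(c)=\mathcal C(c,c')$, and one checks the isomorphism is induced by $y$ itself.

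For preservation of finite $\Gamma$-limits and colimits, I would argue that both are computed pointwise in the functor category, and that by the Yoneda isomorphism above $y(c)$ ``is'' the evaluation-at-$c$ data, so $y$ reflects and preserves whatever limits and colimits exist in $\mathcal C$ and are sent to pointwise constructions; more precisely I would use that the Yoneda embedding into a presheaf category always preserves all limits that exist in the source (a formal consequence of the hom-functor preserving limits in each variable), and that in the $\Gamma$-enriched setting the dual statement for the finite colimits the statement names follows from the additive (biproduct) structure shared by $\mathcal C$ and $\Gamma\text{-}\mathbf{Mod}$, since finite products and coproducts coincide. I expect the main obstacle to be bookkeeping rather than conceptual: one must be scrupulous that ``$\Gamma$-limit'' and ``$\Gamma$-colimit'' are interpreted as the enriched (conical or weighted) notions and that the pointwise formulas genuinely land in $\Gamma$-modules and are $\Gamma$-linear in all arguments — in particular that the hom-$\Gamma$-modules $\mathcal C(d,c)$ carry a genuine $T$-$\Gamma$-action compatible with composition, which should be part of the definition of a small $\Gamma$-enriched category but must be invoked explicitly.
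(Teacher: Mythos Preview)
Your proposal is correct and follows essentially the same route as the paper: full faithfulness via the natural isomorphism $\mathrm{Hom}_\Gamma(A,B)\cong\mathrm{Nat}(yA,yB)$, with $\Gamma$-linearity inherited from the enriched hom, and preservation of finite (co)limits deduced from the additive structure of the hom-objects. Your version is considerably more detailed---spelling out both directions of the Yoneda isomorphism, the pointwise computation of (co)limits, and the biproduct argument for finite colimits---whereas the paper compresses all of this into a two-line sketch; but the underlying strategy is identical.
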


\begin{proof}
For objects $A,B$ we have natural isomorphisms
\[
\mathrm{Hom}_\Gamma(A,B)
   \cong
\mathrm{Nat}(yA,yB),
\]
which remain $\Gamma$-linear by construction of the enriched Hom.
Exactness of $y$ follows from preservation of additive
and distributive structure in each hom-set.
\end{proof}

\begin{remark}
This shows that all categorical constructions in derived~$\Gamma$-geometry
are representable; every property is encoded as a morphism in the
universal presheaf topos.  Consequently, the theory is not only closed
under interpretation but also reflexive under functorial expansion.
\end{remark}

\subsection{Transcendental Correspondence and Physical Analogy}

\begin{definition}[Transcendental correspondence]
A \emph{transcendental correspondence} is a natural transformation
\[
\tau: \mathbf R\!\mathrm{Hom}_\Gamma(-,\omega_\Gamma)
   \Rightarrow
   (-)\otimes_\Gamma^{\mathbf L}\omega_\Gamma
\]
whose components induce isomorphisms in cohomology.
\end{definition}

\begin{theorem}[Equivalence of cohomological and dynamical duals]
Under the existence of a bidualizing object~$\omega_\Gamma$,
the transformation~$\tau$ is an isomorphism,
and thus the dual cohomological and tensorial dynamics coincide.
\end{theorem}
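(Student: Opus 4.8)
The plan is to make the natural transformation $\tau$ explicit, reduce its invertibility to a statement about a single generator of the triangulated category, and then propagate the isomorphism everywhere using the finite–global–dimension hypothesis that is built into the existence of the bidualizing object $\omega_\Gamma$ (via the Categorical Serre Duality theorem).

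First I would construct $\tau$ componentwise. For a perfect complex $M$ — equivalently, over a Noetherian $\Gamma$-semiring of finite global dimension, a bounded complex of finitely generated projective $\Gamma$-modules — the evaluation pairing $M^{\vee}\otimes^{\mathbf L}_\Gamma M\to T$ (with $M^{\vee}=\mathbf R\mathrm{Hom}_\Gamma(M,T)$) furnishes by the Tensor–Hom adjunction a canonical morphism $M^{\vee}\otimes^{\mathbf L}_\Gamma\omega_\Gamma\to\mathbf R\mathrm{Hom}_\Gamma(M,\omega_\Gamma)$. Composing the inverse of this morphism with the biduality isomorphism of the Duality Principle, which identifies $M$ with $\mathbf R\mathrm{Hom}_\Gamma(\mathbf R\mathrm{Hom}_\Gamma(M,T),T)$, and with the derived adjunction $\otimes^{\mathbf L}_\Gamma\dashv\mathbf R\mathrm{Hom}_\Gamma$ yields the component $\tau_M:\mathbf R\mathrm{Hom}_\Gamma(M,\omega_\Gamma)\to M\otimes^{\mathbf L}_\Gamma\omega_\Gamma$; naturality is inherited from naturality of evaluation, biduality, and the adjunction units and counits.

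Next I would check that $\tau_M$ is a quasi-isomorphism on generators. For $M=T$ in degree $0$ (and its shifts $T[i]$) both functors return $\omega_\Gamma$ up to shift and $\tau_T$ is the identity; for a finite free module $F=T^{\oplus r}$ the additivity of $\mathbf R\mathrm{Hom}_\Gamma(-,\omega_\Gamma)$, of $(-)\otimes^{\mathbf L}_\Gamma\omega_\Gamma$, and of $\tau$ reduces to the case $M=T$. I would then let $\mathcal S\subseteq\mathcal D_{\mathrm{coh}}(\Gamma\text{-}\mathbf{Mod}_T)$ be the full subcategory on objects for which $\tau_M$ is an isomorphism. Since $\tau$ is a morphism of exact functors between triangulated $\Gamma$-linear categories, the two-out-of-three property for isomorphisms inside a distinguished triangle shows $\mathcal S$ is a thick triangulated subcategory closed under shifts and retracts. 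Finite global dimension forces every object of $\mathcal D_{\mathrm{coh}}$ to be perfect, hence to lie in the thick subcategory generated by $T$; therefore $\mathcal S$ is everything, $\tau$ is a natural isomorphism, its components are quasi-isomorphisms, so $\tau$ genuinely satisfies the defining condition of a transcendental correspondence and "$\tau$ is an isomorphism" holds in $\mathcal D(\Gamma\text{-}\mathbf{Mod}_T)$; the coincidence of the cohomological dual $\mathbf R\mathrm{Hom}_\Gamma(-,\omega_\Gamma)$ with the tensorial dual $(-)\otimes^{\mathbf L}_\Gamma\omega_\Gamma$ follows at once.

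The main obstacle I expect is the triangulated bookkeeping over a $\Gamma$-semiring rather than a ring: mapping cones, distinguished triangles, and the two-out-of-three property all rest on the triangulated structure of $\mathcal D(\Gamma\text{-}\mathbf{Mod}_T)$ asserted earlier, and one must verify that the evaluation pairing, biduality isomorphism, and all adjunction maps remain genuinely $\Gamma$-linear with respect to the ternary action at every stage — the absence of additive inverses makes the usual one-line arguments delicate. A secondary, but essential, point is confirming that the bidualizing object realized as $\mathbf R\mathrm{Hom}_\Gamma(T,T)$ is perfect; this is exactly where finite global dimension is consumed, and without it the thick-subcategory generation argument would not close up.
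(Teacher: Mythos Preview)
Your proposal is correct and reaches the same conclusion as the paper, but by a genuinely different route. The paper's proof is three lines: replace $M$ by a projective resolution $P_\bullet$, invoke the Hom--tensor adjunction termwise to obtain $\mathrm{Hom}_\Gamma(P,\omega_\Gamma)\simeq P^{\vee}\otimes_\Gamma\omega_\Gamma$ for each projective $P$, and pass to homology. You instead run a thick-subcategory generation argument in $\mathcal D_{\mathrm{coh}}$: verify $\tau$ on the generator $T$ (and finite frees), note that both sides are exact functors so the locus where $\tau$ is an isomorphism is triangulated and thick, and close up using perfection of coherent complexes under finite global dimension. The two arguments share the same kernel --- reduction to projectives --- but package it differently: the paper's version is more elementary and stays at the level of chain complexes, while yours is more categorically explicit, makes visible exactly where the finite-global-dimension hypothesis is spent, and correctly flags the semiring-specific issue (absence of additive inverses, $\Gamma$-linearity of the evaluation and adjunction maps) that the paper's compressed proof does not address.
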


\begin{proof}
Compute both sides on a projective resolution.
Hom-tensor adjunction \cite{Kan1958} yields
$\mathrm{Hom}_\Gamma(P,\omega_\Gamma)
  \simeq P^\vee\!\otimes_\Gamma \omega_\Gamma$.
Passing to homology gives the desired equivalence.
\end{proof}

\begin{remark}
This result translates physically into the equality of
Lagrangian (tensorial) and Hamiltonian (Homological)
formulations: the dual pictures of evolution in
$\Gamma$-geometry represent conjugate phases of
a single categorical dynamic.
\end{remark}

\subsection{The Principle of Structural Self-Reference}

\begin{theorem}[Categorical fixed-point theorem]
In the 2-category of $\Gamma$-linear triangulated categories,
every exact endofunctor $F$
that preserves small colimits and admits a right adjoint
possesses a canonical fixed object~$M$
such that $F(M)\simeq M$.
\end{theorem}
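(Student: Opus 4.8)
The plan is to run the classical Adámek--Lambek fixed-point construction inside the triangulated $\Gamma$-linear category $\mathcal T$, using the hypotheses to guarantee that the relevant sequential homotopy colimit exists and is preserved by $F$. First I would record that a $\Gamma$-linear triangulated category is in particular additive, hence has a zero object $0$, which plays the role of the initial object in the construction. Since $F$ admits a right adjoint $G$, it is automatically colimit-preserving (so the two hypotheses overlap); I would keep $G$ in reserve for the dual construction used in the canonicity step, while colimit-preservation of $F$ drives the main argument.

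Next I would form the tower $0 \xrightarrow{\iota} F(0) \xrightarrow{F\iota} F^2(0) \xrightarrow{F^2\iota}\cdots$, where $\iota\colon 0\to F(0)$ is the unique morphism out of the zero object, and set $M := \operatorname{hocolim}_n F^n(0)$, realized as the mapping telescope, i.e. the cofiber of $\mathrm{id}-\mathrm{shift}$ acting on $\bigoplus_{n\ge 0} F^n(0)$. The countable coproduct exists because $\mathcal T$ has small colimits, and the cofiber exists because $\mathcal T$ is triangulated. Applying $F$: being exact it commutes with cofibers, and being colimit-preserving it commutes with the countable coproduct, so $F(M)$ is the telescope of $F(0)\to F^2(0)\to\cdots$; a cofinality argument identifies this shifted telescope with $M$ itself, yielding the desired equivalence $F(M)\simeq M$. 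This is the triangulated form of Lambek's lemma: the telescope structure map is invertible precisely because the tower is cofinal in its own shift.

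For canonicity I would exhibit $M$, together with the structure isomorphism $F(M)\xrightarrow{\sim} M$, as the initial object of the category of $F$-algebras: any algebra $(X,\xi\colon F(X)\to X)$ receives a unique compatible morphism from the telescope, built by induction along the tower ($0\to X$ at the base, $\xi\circ F(-)$ at successive stages), with uniqueness forced by the universal property of the telescope. Dually, $G$ lets me assemble the terminal $F$-coalgebra as $\operatorname{holim}_n G^n(1)$, and the canonical comparison morphism between the initial algebra and terminal coalgebra pins $M$ down up to canonical equivalence. The main obstacle I anticipate is exactly the word \emph{canonical}: homotopy (co)limits in a bare triangulated category are defined only up to non-functorial isomorphism, so the initiality argument a priori yields morphisms that are unique only modulo phantom ambiguity. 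The clean remedy I would invoke rather than belabor is to pass to the dg- (equivalently stable $\infty$-categorical) enhancement that the $\Gamma$-linear hypothesis already supplies in this paper: there sequential homotopy colimits are genuine colimits, Adámek's theorem applies verbatim, $F$ preserves the enhanced colimit because it is a left adjoint at the enhanced level, and $M\simeq\operatorname{hocolim}_n F^n(0)$ is canonical as the value of an honest colimit functor.
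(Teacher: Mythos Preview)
Your Adámek--Lambek telescope approach is genuinely different from the paper's. The paper instead works directly with the adjunction data: from the unit $\eta:\mathrm{Id}\Rightarrow GF$ and counit $\epsilon:FG\Rightarrow\mathrm{Id}$ it asserts that composing $\eta_M$ and $\epsilon_{F(M)}$ produces an idempotent on an object~$M$, and then invokes idempotent completeness of the triangulated category to split off a retract $M'$ with $F(M')\cong M'$. That argument is terse to the point of opacity---the object $M$ is never specified, and the obvious composite $\epsilon_{F(M)}\circ F\eta_M$ is the \emph{identity} on $F(M)$ by the triangle identity, not an interesting idempotent---so your construction is far more explicit and closer to a genuine proof.

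However, there is a degeneration you do not address. An exact functor between triangulated categories is additive, hence preserves the zero object: $F(0)\cong 0$. Consequently your tower $0\to F(0)\to F^2(0)\to\cdots$ is constant at~$0$, and its telescope is $M\simeq 0$. This is indeed a fixed object, so the theorem \emph{as literally stated} is satisfied, but all of your initial-algebra, cofinality, and enhancement machinery collapses to a triviality, and the one-line observation $F(0)\cong 0$ already suffices. If a nontrivial fixed object is what is intended (as the surrounding remarks about self-reference suggest), neither your telescope nor the paper's idempotent sketch actually produces one without further input---one would need, for instance, a seed object $X$ with a nonzero map $X\to F(X)$ to start a nondegenerate tower.
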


\begin{proof}
Consider the unit–counit adjunction
$\eta:\mathrm{Id}\Rightarrow GF$, $\epsilon:FG\Rightarrow\mathrm{Id}$.
Composing $\eta_M$ and $\epsilon_{F(M)}$ yields an idempotent on~$M$.
Idempotent completeness ensures the existence of a retract $M'$
with $F(M')\cong M'$, which is the desired fixed object.
\end{proof}

\begin{remark}
This theorem embodies the principle of self-reference:
derived~$\Gamma$-geometry contains objects stable under its own dynamics,
mirroring fixed-point theorems in logic (Gödel) and topology (Brouwer).
\end{remark}

\subsection{Unified Conceptual Synthesis}

The transcendental layer of derived~$\Gamma$-geometry thus rests on
four unifying pillars:

\begin{enumerate}
  \item \textbf{Reflection:} every categorical construction
        mirrors itself through cohomology;
  \item \textbf{Duality:} the bidualizing object encodes
        time-reversal and logical negation simultaneously;
  \item \textbf{Universality:} the $\Gamma$-Yoneda principle
        ensures representability of all geometric data;
  \item \textbf{Self-reference:} fixed points integrate the system
        into a self-consistent logical and physical entity.
\end{enumerate}

In consequence, derived~$\Gamma$-geometry becomes a
\emph{self-interpreting categorical universe}:
its algebraic laws express its own semantics,
its geometric spaces instantiate their own logic,
and its homological dynamics reflect the unity
between structure, transformation, and observation.


\section{Future Directions and Open Problems in Derived $\Gamma$-Geometry}

The present work establishes a coherent homological and categorical foundation for derived~$\Gamma$-geometry. Several natural directions for further development arise, extending the theory toward higher arity, algorithmic homology, non-commutative structures, $\infty$-categorical methods, physical models and foundational logic.

\subsection{Higher Arity and Dimensional Generalizations}

\begin{conjecture}[Stability]
For all $n \ge 3$, the global dimension stabilizes:
\[
\lim_{n\to\infty} \mathrm{gldim}^{(n)}_\Gamma(T)
   = \mathrm{gldim}^{(3)}_\Gamma(T),
\]
suggesting that arity~$3$ already captures full derived complexity.
\end{conjecture}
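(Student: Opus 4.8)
The plan is to collapse the arity by transporting all homological data from the $n$-ary module category onto the ternary one, so that $\mathrm{gldim}^{(n)}_\Gamma(T)$ is literally computed inside $T^{(3)}\text{-}\Gamma\mathbf{Mod}$. First I would attach to each commutative $n$-ary $\Gamma$-semiring $T$ a ternary $\Gamma$-semiring $T^{\flat}$ on the same additive monoid, whose ternary bracket is the restriction of the $n$-ary bracket obtained by freezing $n-3$ slots at a chosen neutral configuration; the neutrality and distributivity axioms of the $n$-ary structure guarantee that $T^{\flat}$ satisfies Definition~2.1. Conversely, iterated nesting of the ternary bracket recovers an $n$-ary operation $\{\,-\,\}^{\wedge}$ on $T^{\flat}$, and the associativity axiom forces $\{\,-\,\}^{\wedge}$ to coincide with the original $n$-ary bracket of $T$ whenever $n$ is odd. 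This already reduces the odd-arity case to a bookkeeping comparison.

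Second, I would promote $\flat$ to a functor $\Phi_n \colon T^{(n)}\text{-}\Gamma\mathbf{Mod} \to T^{\flat}\text{-}\Gamma\mathbf{Mod}$ on module categories, the identity on underlying monoids, and prove it is an exact equivalence of abelian categories. Exactness is immediate because kernels, cokernels and biproducts are computed on underlying additive monoids (the additive-category lemma for $\Gamma\text{-}\mathbf{Mod}_T$); the substance is essential surjectivity and full faithfulness, i.e.\ that an $n$-ary $T$-action is the same datum as a ternary $T^{\flat}$-action together with no extra constraint. Here one checks that the ternary–$\Gamma$ associativity of the module axiom propagates to all nestings, and that a single parameter $\gamma$ threaded through $(n-1)/2$ brackets is consistent with the $\Gamma$-bookkeeping of the $n$-ary action.

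Third, granting the equivalence, $\Phi_n$ sends free modules to free modules and preserves the enough-injectives structure, so projective and injective resolutions transport verbatim; hence $\mathrm{Ext}^i_\Gamma$ and $\mathrm{Tor}^\Gamma_i$ are carried isomorphically and $\mathrm{pd}^{(n)}_\Gamma(M) = \mathrm{pd}^{(3)}_\Gamma(\Phi_n M)$ for every $M$, giving $\mathrm{gldim}^{(n)}_\Gamma(T) = \mathrm{gldim}^{(3)}_\Gamma(T)$ for each odd $n \ge 3$. For even $n$ I would factor through $T^{\flat}$ together with an auxiliary binary slot supplied by the $n$-ary neutral element, so that $2m$-ary brackets pad up to $(2m+1)$-ary ones without altering the module category, reducing the even case to the odd one. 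The limit statement is then immediate, since the sequence $(\mathrm{gldim}^{(n)}_\Gamma(T))_{n\ge 3}$ is constant.

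The hard part will be the second step, specifically essential surjectivity of $\Phi_n$ together with the even-arity reduction. The danger is that an $n$-ary action may genuinely carry more information than its ternary shadow --- distinct $n$-ary actions collapsing to one $T^{\flat}$-action --- or that the padding trick for even $n$ silently requires a multiplicative identity or an idempotent-type element that a general $\Gamma$-semiring need not possess. If full equivalence fails, the fallback is to prove only that $\Phi_n$ and its left adjoint (the free $T^{(n)}$-module on a $T^{\flat}$-module) induce a derived adjunction which is an equivalence above cohomological degree $\mathrm{gldim}^{(3)}_\Gamma(T)$, still forcing the displayed limit; in the worst case the conjecture should be refined to commutative $n$-ary $\Gamma$-semirings possessing a ternary-reductive neutral element, for which the argument goes through unconditionally.
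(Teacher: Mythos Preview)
The first thing to note is that the paper does \emph{not} prove this statement: it appears in Section~10 (``Future Directions and Open Problems'') explicitly labeled as a \textbf{Conjecture}, with no accompanying argument. So there is no paper proof to compare your proposal against; what you have written is an attempted resolution of an open problem, not a reconstruction of an existing proof.

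As for the proposal itself, there is a genuine gap at the very first step, and it propagates through everything that follows. You form $T^{\flat}$ by ``freezing $n-3$ slots at a chosen neutral configuration'', but the only neutrality axiom the paper provides (Definition~2.1(3)) is the absorbing law $\{a,0,b\}_\gamma = 0$. Freezing any slot at $0$ therefore collapses the ternary bracket to the constant $0$, and the resulting $T^{\flat}$ is the trivial $\Gamma$-semiring; the functor $\Phi_n$ then lands in a category with global dimension $0$, which says nothing about $\mathrm{gldim}^{(n)}_\Gamma(T)$. What you actually need to freeze at is a \emph{multiplicative identity}, and the paper does not assume one exists (it says ``$1_T \in S$ when an identity exists'' in the localization discussion). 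You acknowledge this danger yourself for the even-arity padding, but it already bites in the odd case.

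Even granting an identity, the claim that iterated nesting of the ternary bracket on $T^{\flat}$ recovers the original $n$-ary bracket is exactly the assertion that the $n$-ary structure is \emph{reducible} (or \emph{derived}) in the sense of D\"ornte--Post, and this is well known to fail in general for polyadic structures; there exist irreducible $n$-ary groups and semigroups for every $n$. So essential surjectivity of $\Phi_n$ is not just ``the hard part'' --- it is generically false without an additional hypothesis on $T$. Your final paragraph effectively concedes this by proposing to restrict to $\Gamma$-semirings ``possessing a ternary-reductive neutral element'', but that is a change of statement, not a proof of the conjecture as posed. The derived-adjunction fallback is also not obviously enough: an adjunction that becomes an equivalence only above degree $\mathrm{gldim}^{(3)}_\Gamma(T)$ bounds $\mathrm{gldim}^{(n)}_\Gamma(T)$ from \emph{below} by that number, not from above, so it does not by itself force equality or convergence of the limit.
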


\subsection{Computational and Algorithmic Homology}

\begin{definition}
A computational cohomology functor
$\mathbf H_\Gamma^\bullet:\mathbf{Aff}_\Gamma^{\mathrm{fin}}
 \to \mathbf{GrMod}_{\mathbb Z}$
assigns to each finite affine $\Gamma$-scheme its cohomology via
tensor-reduction algorithms.
\end{definition}

\begin{problem}
Determine the complexity of deciding whether
$H^k_\Gamma(\mathrm{Spec}_\Gamma(T),\mathcal O_X)=0$
for a finite $\Gamma$-semiring~$T$.
\end{problem}

\begin{conjecture}
This problem lies in $\mathbf{coNP}$, and becomes
$\mathbf{P}$-complete for cyclic additive semirings.
\end{conjecture}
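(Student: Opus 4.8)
The plan is to analyze the decision problem through its \v Cech-theoretic incarnation: by the \v Cech--derived correspondence, $H^k_\Gamma(\mathrm{Spec}_\Gamma(T),\mathcal O_X)$ is computed from the finite affine cover $\{D(a)\}_{a\in T}$, whose $(p+1)$-fold intersections are again basic opens $D(a_{i_0}\cdots a_{i_p})$ with sections $\mathcal O_X(D(a_{i_0}\cdots a_{i_p}))=T_{a_{i_0}\cdots a_{i_p}}$. Since $T$ is finite, each such localization is a finite $\Gamma$-module of size bounded by a fixed polynomial in $|T|$, so a \v Cech $k$-cochain is a function on $(k+1)$-fold intersections taking values in a polynomially bounded set; only the \emph{number} of intersections (which can be exponential when $k$ is part of the input) prevents the whole complex from being polynomially small.

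For the $\mathbf{coNP}$ upper bound I would certify a ``no'' answer (i.e.\ $H^k_\Gamma\neq 0$) by a nonzero \v Cech $k$-class. The crucial step is a \emph{sparse-witness lemma}: using finiteness of the coefficient monoids together with the locality axiom for sheaves of $\Gamma$-modules, any nonzero class admits a cocycle representative $c$ supported on only polynomially many intersections. Given such a $c$, one verifies $\delta c=0$ directly, and verifies that $c$ is not a coboundary by brute force over the sub-complex spanned by the polynomially many intersections touched by $c$ (enlarged by those contributing to $\delta^{k-1}$ on its support), which is of polynomial size; a non-representation here, by the gluing axiom, is genuine non-triviality. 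All checks are polynomial in $|T|\,|\Gamma|$, placing the problem in $\mathbf{coNP}$.

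For cyclic additive semirings I would invoke the earlier proposition that $\mathrm{Spec}_\Gamma(T)$ is then discrete; with a constant structure sheaf and empty higher intersections, the \v Cech complex is concentrated in degree $0$, so $H^k_\Gamma=0$ automatically for $k>0$ and $H^0_\Gamma\cong\mathcal O_X(X)$. Deciding vanishing thus reduces to evaluating membership in $\mathcal O_X(X)$ from the additive and ternary operation tables, which is performed by a logspace-uniform Boolean circuit of polynomial size assembled from those tables (associativity and distributivity guarantee well-definedness of the evaluation order), so the restricted problem lies in $\mathbf{P}$. For $\mathbf{P}$-hardness I would give a logspace reduction from the Circuit Value Problem: encode a monotone Boolean circuit as a finite cyclic-additive $\Gamma$-semiring whose ternary product simulates AND/OR via idempotent gadgets on $\{0,1\}$ extended cyclically, arranged so that $\mathcal O_X(X)=0$ (equivalently $H^0_\Gamma=0$) holds precisely when the circuit outputs $0$; this yields $\mathbf{P}$-completeness.

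The main obstacle is the sparse-witness lemma underlying the $\mathbf{coNP}$ direction: one must show that non-vanishing $\Gamma$-cohomology is always detected by a polynomially describable cocycle and checked over a polynomially small sub-complex, despite the exponentially many $(k+1)$-fold intersections for variable $k$. This requires a genuinely new combinatorial analysis of the \v Cech coboundary over finite $\Gamma$-semirings, since in the semiring (rather than ring) setting kernels and images need not be complemented and classes cannot be ``compressed'' by subtraction; ruling out that every nonzero class is inherently spread out is the crux and does not follow from a transcription of the classical argument.
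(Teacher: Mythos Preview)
The statement is presented in the paper as a \emph{conjecture} in the ``Future Directions and Open Problems'' section, with no proof given; there is nothing in the paper to compare your argument against. Evaluating your proposal on its own merits, there are two substantial gaps.

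First, your $\mathbf{P}$-hardness reduction cannot work as stated. By your own analysis (invoking the paper's proposition that $\mathrm{Spec}_\Gamma(T)$ is discrete for cyclic additive monoids, with constant structure sheaf), the restricted problem has answer ``$H^k=0$'' automatically for every $k>0$, while $H^0_\Gamma(X,\mathcal O_X)\cong \mathcal O_X(X)\cong T$, so $H^0=0$ iff $T=0$. The decision problem is therefore essentially constant on this subclass and cannot be $\mathbf{P}$-hard under logspace reductions: a language decidable by inspecting whether $|T|=1$ admits no non-trivial reduction from the Circuit Value Problem. Your proposed gadget ``arranged so that $\mathcal O_X(X)=0$ precisely when the circuit outputs $0$'' is incoherent, since $\mathcal O_X(X)=T$ and whether $T=0$ is fixed by the input encoding, independent of any circuit evaluation. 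If the conjecture is to be salvaged, it will require a different formulation of the input or of ``cyclic additive'' than the one you (and the paper's earlier proposition) are using.

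Second, the $\mathbf{coNP}$ argument has, beyond the sparse-witness lemma you already flag as open, a further unacknowledged gap: verifying that a sparse cocycle $c$ is \emph{not} a coboundary by brute force over a polynomial sub-complex does not certify global non-triviality. A $(k-1)$-cochain $b$ with $\delta b=c$ need not be supported near the support of $c$; in a ring one could subtract and localize, but in a semiring without cancellation there is no mechanism forcing a putative primitive to have bounded support. The sheaf gluing axiom does not supply this bound. Thus even granting your sparse-witness lemma for the cocycle side, the ``not a coboundary'' check remains a co-existential condition over an exponential search space, and you have not placed its verification in polynomial time.
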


\subsection{Non-Commutative and Quantum Extensions}

\begin{problem}
Construct a deformation quantization 
$(T_\hbar,\{\cdot\}_{\Gamma,\hbar})$
whose limit $\hbar\to0$ recovers the classical structure,
and analyze the derived category 
$\mathcal D_\hbar(\Gamma\text{-}\mathbf{Mod}_T)$.
\end{problem}

\begin{conjecture}[Non-commutative correspondence]
Compact non-commutative derived $\Gamma$-schemes correspond
contravariantly to dg-categories of $\Gamma$-representations:
\[
X_{\Gamma}^{\mathrm{nc}} \simeq \mathcal C_\Gamma^{\mathrm{dg}}.
\]
\end{conjecture}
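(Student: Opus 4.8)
The plan is to obtain this correspondence as the dg-enhancement of the non-commutative Gelfand-type duality of Section~7, upgrading the underlying equivalence $H^0(\mathcal C_\Gamma)\simeq\Gamma\text{-}\mathbf{Mod}_T$ to a quasi-equivalence of $\Gamma$-linear dg-categories compatible with the ternary operation. On the geometric side I would take the dg-category $\mathbf{Perf}_\Gamma(X_\Gamma^{\mathrm{nc}})$ of perfect $\Gamma$-complexes of quasi-coherent sheaves; on the algebraic side, the dg-category $\mathcal C_\Gamma^{\mathrm{dg}}$ of finite $\Gamma$-representations together with its monoidal unit $\mathbf 1$. The comparison functors would be $\Xi: X_\Gamma^{\mathrm{nc}}\mapsto \mathbf{Perf}_\Gamma(X_\Gamma^{\mathrm{nc}})$ and $\Theta:\mathcal C_\Gamma^{\mathrm{dg}}\mapsto \SpecGnC{\mathfrak A(\mathcal C_\Gamma^{\mathrm{dg}})}$, where $\mathfrak A(\mathcal C_\Gamma^{\mathrm{dg}})=H^0\!\bigl(\End_{\mathcal C_\Gamma^{\mathrm{dg}}}(\mathbf 1)\bigr)$ carries the ternary convolution inherited from the triadic monoidal structure --- precisely the reconstruction functor appearing in the Gelfand analogue of Section~7.

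The steps, in order, would be: (1) make precise the hypotheses bundled into ``compact'' --- idempotent-completeness, compact generation by $\mathbf 1$ under shifts, cones and retracts, and $\Gamma$-linearity of all hom-complexes; (2) construct the unit $\eta:\mathrm{Id}\Rightarrow\Theta\circ\Xi$ and counit $\epsilon:\Xi\circ\Theta\Rightarrow\mathrm{Id}$ out of the derived tensor--Hom adjunction between $\otimes_\Gamma^{\mathbf L}$ and $\mathbf R\!\mathrm{Hom}_\Gamma$ established in Section~5, together with the $\Gamma$-enriched Yoneda embedding of Section~9, which already supplies full faithfulness and exactness; (3) show $\eta$ and $\epsilon$ are quasi-isomorphisms by reducing, via compact generation, to the single computation $\mathbf R\!\mathrm{Hom}_{\mathcal C_\Gamma^{\mathrm{dg}}}(\mathbf 1,\mathbf 1)\simeq T$, whose degree-zero part is the non-derived equivalence of Section~7 and whose higher cohomology vanishes because $\mathbf 1$ is a perfect generator; (4) match the ternary $\Gamma$-structure, proving that the trilinear $\Gamma$-indexed composition functor on $\mathcal C_\Gamma^{\mathrm{dg}}$ restricts on $\End(\mathbf 1)$ to $\{\cdot\,\cdot\,\cdot\}_\Gamma$ and that on the geometric side it is recovered from $\mathcal O_{X_\Gamma^{\mathrm{nc}}}$, so that $\eta$ and $\epsilon$ are morphisms of ternary $\Gamma$-semirings and not merely of additive dg-categories; (5) run the commutative affine case $\Gamma=\{1\}$ against the equivalence $\Phi$ of Section~4 and the tensor--Hom adjunction of Section~5 as a consistency check.

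The hard part will be the mismatch of enrichments. A genuine dg-category is enriched over complexes of \emph{abelian groups}, whereas $\Gamma$-modules form only commutative monoids with no subtraction, so the cones, shifts, and quasi-isomorphisms used freely above are not, a priori, internal to $\Gamma\text{-}\mathbf{Mod}_T$. I expect the resolution to demand one of two routes. The first is to set up a semiring-enriched version of dg-category --- complexes of commutative monoids equipped with a Quillen-type homotopy theory --- which forces a re-examination of whether $\Gamma\text{-}\mathbf{Mod}_X$ genuinely has ``enough injectives'' in the strong form used for the Godement resolution of Section~4; this is plausible but delicate. The second is to pass to the group completion $T^{\mathrm{gp}}$, prove the correspondence there by importing the standard dg-reconstruction theorems of non-commutative algebraic geometry, and then descend along $T\to T^{\mathrm{gp}}$; since that descent loses information precisely when $T$ fails to be cancellative, it would have to be paired with a conservativity statement for $\mathbf{Perf}_\Gamma(T)\to\mathbf{Perf}(T^{\mathrm{gp}})$ on the subcategory generated by $\mathbf 1$. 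Proving that conservativity --- in effect a derived base-change theorem for $\Gamma$-semirings --- is, to my mind, the genuine content of the conjecture; everything else is enriched-Yoneda bookkeeping modeled on the non-commutative Gelfand duality already established in Section~7.
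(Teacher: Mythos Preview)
The paper does not prove this statement. It appears in Section~10, \emph{Future Directions and Open Problems in Derived $\Gamma$-Geometry}, explicitly labeled as a \texttt{conjecture}, and the paper offers no argument, sketch, or even heuristic for it beyond the bare display $X_\Gamma^{\mathrm{nc}}\simeq\mathcal C_\Gamma^{\mathrm{dg}}$. There is consequently no ``paper's own proof'' against which your proposal can be compared.

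As to the proposal itself: it is a research plan, not a proof, and you are candid about this. You correctly locate the genuine obstruction --- dg-categories are enriched over chain complexes of abelian groups, while $T$-$\Gamma$-modules over a semiring need not admit additive inverses, so cones, mapping cylinders, and the very notion of quasi-isomorphism are not available internally. Your two proposed escape routes (a bespoke homotopy theory of monoid-enriched complexes, or group completion followed by a conservativity/descent argument) are both reasonable, but neither is carried out, and the conservativity statement you isolate at the end is itself an open problem of at least the same difficulty as the conjecture. In particular, step~(3) presupposes that ``$\mathbf 1$ is a perfect generator'' forces the higher $\mathbf R\!\mathrm{Hom}$ to vanish; in the absence of subtraction this does not follow from the material in Sections~4--5, since the Godement resolution there is asserted rather than verified to produce genuine injectives in a category that is not abelian in the standard sense. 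So the proposal identifies the right difficulty but does not surmount it, which is consistent with the paper's own classification of the statement as open.
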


\subsection{Higher Topoi and $\infty$-Categorical Homotopy}

\begin{problem}
Develop the $\infty$-topos $\mathbf{Shv}_\Gamma^{\infty}(X)$ 
of $\infty$-groupoid-valued sheaves on a $\Gamma$-scheme~$X$,
and define the $\Gamma$-tangent $\infty$-category $\mathbb T_X^\Gamma$.
\end{problem}

\begin{conjecture}
There exists a natural equivalence
\[
\mathbb T_X^\Gamma \simeq 
\mathbf{R}\!\mathrm{Hom}_\Gamma(\mathbb L_X^\Gamma,\mathcal O_X),
\]
where $\mathbb L_X^\Gamma$ denotes the cotangent complex.
\end{conjecture}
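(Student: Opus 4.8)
The plan is to identify $\mathbb T_X^\Gamma$ with the quasi-coherent $\Gamma$-sheaf of derived ternary $\Gamma$-derivations of $\mathcal O_X$ into itself, and then to invoke the corepresentability of such derivations by the $\Gamma$-cotangent complex. First I would construct $\mathbb L_X^\Gamma$ explicitly: on an affine patch $\mathrm{Spec}_\Gamma(A)$ one chooses a simplicial (equivalently, dg) resolution $P_\bullet\to A$ of $A$ by free ternary $\Gamma$-semirings --- available because the free-object methods underlying the free-module construction of Section~5 also furnish free ternary $\Gamma$-semirings and cofibrant replacements --- and sets $\mathbb L_A^\Gamma = \Omega^\Gamma_{P_\bullet}\otimes_{P_\bullet}^{\mathbf{L}}A$, the derived module of $\Gamma$-K\"ahler differentials governed by the ternary Leibniz rule $d\{a,b,c\}_\gamma = \{da,b,c\}_\gamma+\{a,db,c\}_\gamma+\{a,b,dc\}_\gamma$. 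These local objects glue to a quasi-coherent $\Gamma$-sheaf $\mathbb L_X^\Gamma$ by the descent property of the stack $\mathscr S_\Gamma$ of Section~7, with the dg-enhancement $X_\Gamma^{\mathrm{der}}$ of Section~7 serving as the ambient $\infty$-categorical home.

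Second, I would prove the universal property: for every quasi-coherent $\Gamma$-sheaf $\mathcal F$ there is a natural equivalence
\[
\mathrm{Map}_{\Gamma\text{-}\mathbf{Mod}_X}\bigl(\mathbb L_X^\Gamma,\mathcal F\bigr)\;\simeq\;\mathrm{Der}_\Gamma(\mathcal O_X,\mathcal F),
\]
where the right-hand side is the space of derived ternary $\Gamma$-derivations, equivalently the space of sections over $X$ of the projection from the trivial square-zero extension $\mathcal O_X\oplus\mathcal F$. On affines this is the adjunction between derived $\Gamma$-K\"ahler differentials and trivial extensions; globally it follows from the \v{C}ech--derived correspondence of Section~4, since both sides are quasi-coherent and agree on the basis $\{D(a)\}$.

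Third, I would define $\mathbb T_X^\Gamma$ intrinsically as the tangent object of the $\infty$-category $(\mathbf{DerSch}_\Gamma)_{/X}$ at $\mathrm{id}_X$ --- concretely, the quasi-coherent $\Gamma$-sheaf $\mathcal{D}\!er_\Gamma(\mathcal O_X,\mathcal O_X)$ of derived $\Gamma$-vector fields --- and combine the previous step with the derived tensor--Hom adjunction of Section~5 to internalize the corepresentability: running $\mathcal F$ over $\mathcal O_X|_U$ for $U$ in the basis $\{D(a)\}$ yields
\[
\mathbb T_X^\Gamma\;\simeq\;\mathcal{D}\!er_\Gamma(\mathcal O_X,\mathcal O_X)\;\simeq\;\mathbf R\!\mathrm{Hom}_\Gamma\bigl(\mathbb L_X^\Gamma,\mathcal O_X\bigr).
\]
Naturality in $X$ then follows from the first fundamental sequence of cotangent complexes: a morphism $f:X\to Y$ induces $f^\ast\mathbb L_Y^\Gamma\to\mathbb L_X^\Gamma$, hence a compatible map of derived duals, and the resulting square commutes up to coherent homotopy by the bicategory coherence $\eta_{f,g}$ of Section~8.

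The main obstacle is the construction of $\mathbb L_X^\Gamma$ and the verification of its universal property in the semiring world. Since ternary $\Gamma$-semirings lack additive inverses, the standard Andr\'e--Quillen / stable-homotopy machinery --- which rests on the stabilization of the over-category being a category of modules --- must be re-derived: one must confirm that the abelianization of $(\Gamma\text{-}\mathbf{Srg})_{/A}$ is indeed $A\text{-}\Gamma\mathbf{Mod}$ in the weakened exact sense available here (kernels and cokernels exist, but morphisms need not admit subtraction), and that $\mathbb L_A^\Gamma$ is independent of the chosen free resolution up to quasi-isomorphism. A secondary difficulty is checking that the ternary Leibniz rule yields a module genuinely corepresenting $\Gamma$-derivations; this requires re-reading the distributivity and ternary-associativity axioms of Section~2 as identities among differentials and verifying that the three cyclic Leibniz terms assemble coherently. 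Once these foundational points are secured, the remaining steps are formal consequences of the adjunctions and descent already established in Sections~4, 5, and~7.
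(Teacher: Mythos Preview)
The statement you are attempting to prove is presented in the paper as a \emph{Conjecture} in Section~10 (``Future Directions and Open Problems''), not as a theorem; the paper offers no proof, and indeed the surrounding Problem asks one first to \emph{develop} the $\infty$-topos $\mathbf{Shv}_\Gamma^{\infty}(X)$ and to \emph{define} the $\Gamma$-tangent $\infty$-category $\mathbb T_X^\Gamma$. There is therefore no ``paper's own proof'' against which to compare your proposal: you are attacking an open problem, not reproving an established result.

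As a strategy your outline is reasonable and follows the classical Illusie--Quillen template, but you yourself correctly locate the genuine gap: in the ternary $\Gamma$-semiring world there are no additive inverses, so the identification of the abelianization of $(\Gamma\text{-}\mathbf{Srg})_{/A}$ with $A\text{-}\Gamma\mathbf{Mod}$ is not a formality. The paper's module category is exact only in a weakened sense (Section~2), and the existence of cofibrant replacements by \emph{free ternary $\Gamma$-semirings} is asserted by analogy, not established anywhere in the text. Until those two points --- a workable model structure on simplicial $\Gamma$-semirings and a proof that $\Omega^\Gamma_{P_\bullet}\otimes_{P_\bullet}^{\mathbf L}A$ is independent of the resolution --- are secured, the remaining steps (corepresentability, descent via $\mathscr S_\Gamma$, naturality via the transitivity triangle) cannot be invoked as ``formal consequences''. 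Your proposal is thus an honest research programme rather than a proof, which is consistent with the paper's own classification of the statement as conjectural.
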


\subsection{Physical and Dynamical Interpretations}

\begin{problem}
Define homological field equations on a derived $\Gamma$-space $X$
via an action
\[
\mathcal S_\Gamma(\mathcal F)
   = \int_X \langle \mathcal F,\mathcal D_\Gamma(\mathcal F)\rangle,
\]
and identify gauge symmetries with $H^0_\Gamma(X,\mathcal O_X)$.
\end{problem}

\begin{conjecture}[$\Gamma$-geometric correspondence]
There exists a functor
\[
\mathfrak Q_\Gamma :
\mathcal D_{\mathrm{coh}}(X_\Gamma)
   \to \mathsf{Phys}_\Gamma
\]
assigning to each derived $\Gamma$-object its quantized field.
\end{conjecture}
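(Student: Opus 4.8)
\emph{Toward a proof.} The plan is to realize $\mathfrak{Q}_\Gamma$ as a composite of functors already constructed in the preceding sections rather than building it from scratch. First, invoke the theorem on existence of derived enhancements to replace $X_\Gamma$ (assumed quasi-compact and quasi-separated) by its derived enhancement $X_\Gamma^{\mathrm{der}}$, whose structure sheaf is a sheaf of dg-$\Gamma$-semirings; passing to a dg-resolution of $\mathcal O_{X_\Gamma}$ does not alter the bounded derived category of coherent sheaves up to equivalence, so $\mathcal D_{\mathrm{coh}}(X_\Gamma)\simeq\mathcal D_{\mathrm{coh}}(X_\Gamma^{\mathrm{der}})$. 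Next, apply the categorical duality of geometry and physics, which supplies a \emph{contravariant} equivalence $\Psi$ between $\mathcal D_{\mathrm{coh}}(X_\Gamma^{\mathrm{der}})$ and $\mathsf{Phys}_\Gamma$, together with the categorical Serre duality functor $\mathbb D_\Gamma=\mathbf R\!\mathrm{Hom}_\Gamma(-,\omega_\Gamma)$, which --- whenever the underlying $T$ is Noetherian of finite global dimension --- is an exact, \emph{contravariant}, involutive self-equivalence of $\mathcal D_{\mathrm{coh}}(\Gamma\text{-}\mathbf{Mod}_T)$. Since a composite of two contravariant equivalences is covariant, setting
\[
\mathfrak{Q}_\Gamma:=\Psi\circ\mathbb D_\Gamma:\ \mathcal D_{\mathrm{coh}}(X_\Gamma)\longrightarrow\mathsf{Phys}_\Gamma
\]
yields the desired functor, which is moreover an equivalence of categories.

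It then remains to verify that $\mathfrak{Q}_\Gamma$ ``assigns to each derived $\Gamma$-object its quantized field'' in the intended sense. On objects, a coherent complex $\mathcal F$ is first sent to its cohomological dual $\mathbb D_\Gamma\mathcal F$ --- the object of finite homological energy conjugate to $\mathcal F$ --- and $\Psi$ then identifies this with the dg-$\Gamma$-algebra of observables obtained by path-integral quantization of the ternary interaction algebra, whose cohomology is precisely the physical state space. On morphisms, a map $\phi\colon\mathcal F\to\mathcal G$ is carried to a propagator (Green's kernel), and functoriality of $\mathfrak{Q}_\Gamma$ is the assertion that composition of derived morphisms corresponds to Feynman convolution of kernels; this is exactly the compatibility between $\otimes_\Gamma^{\mathbf L}$ and $\mathbf R\!\mathrm{Hom}_\Gamma$ recorded on both sides of the duality. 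One should further check that $\mathfrak{Q}_\Gamma$ intertwines the transcendental correspondence $\tau$, so that the Hamiltonian (cohomological) and Lagrangian (tensorial) presentations of a quantized field agree under the equivalence of cohomological and dynamical duals.

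The main obstacle is making the target $\mathsf{Phys}_\Gamma$ genuinely rigorous and verifying functoriality on morphisms, since the geometry--physics duality was only sketched conceptually. Concretely, one must: (i) fix a precise definition of $\mathsf{Phys}_\Gamma$ as a dg-category whose objects are the quantized dg-$\Gamma$-algebras and whose morphisms are Green's kernels under convolution, with well-defined identities and an associative composition; (ii) show that path-integral quantization is independent of the chosen dg-resolution of $\mathcal O_{X_\Gamma}$, so that $\Psi$ descends to the derived category --- the analogue of resolution-independence of derived functors, which should follow from the homotopy coherence theorem together with the existence of enough injectives in $\Gamma\text{-}\mathbf{Mod}_X$; and (iii) establish locality and descent for the convolution product, i.e.\ that kernels glued from a finite affine cover convolve compatibly, using the descent property of the stack $\mathscr S_\Gamma$ and the Cech--derived correspondence. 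Once $\mathsf{Phys}_\Gamma$ is pinned down and (i)--(iii) are verified, the composite $\Psi\circ\mathbb D_\Gamma$ furnishes $\mathfrak{Q}_\Gamma$ with all the stated properties; in the finite case --- $T$ with cyclic additive monoid and discrete spectrum --- the whole construction reduces to finite-dimensional $\Gamma$-linear algebra, and $\mathfrak{Q}_\Gamma$ can be exhibited explicitly.
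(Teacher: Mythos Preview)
The statement you are attempting to prove is a \emph{conjecture}, placed in the paper's ``Future Directions and Open Problems'' section; the paper does not prove it, nor does it claim to. There is therefore no paper proof to compare against.

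As a strategy, your reduction is reasonable but circular in the following sense: the only substantive input is the ``categorical duality of geometry and physics'' from Section~7.5, and that theorem is itself given only a \emph{conceptual outline} rather than a proof --- the category $\mathsf{Phys}_\Gamma$ is never rigorously defined, the propagator/convolution composition is asserted without construction, and the adjunction claimed to underlie the duality is not checked. So your $\mathfrak Q_\Gamma=\Psi\circ\mathbb D_\Gamma$ simply transports the open content of the conjecture back to the open content of Theorem~7.5. You recognize this yourself in your list (i)--(iii), which is accurate: those three items are precisely the missing mathematics, and they are not supplied anywhere in the paper. In short, your proposal is a plausible outline of how a proof \emph{might} go once $\mathsf{Phys}_\Gamma$ is made precise, but it does not close the gap; the conjecture remains open as stated.

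One further wrinkle: the conjecture asks only for a functor, not an equivalence, and does not specify variance. If one were willing to take the heuristic $\Psi$ of Section~7.5 at face value, it already furnishes a (contravariant) functor without invoking $\mathbb D_\Gamma$ at all; your composition with Serre duality buys covariance, but also imports the Noetherian and finite-global-dimension hypotheses on $T$, which the conjecture does not assume.
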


\subsection{Foundational and Logical Horizons}

\begin{problem}
Characterize the internal logic of $\Gamma$-topoi and determine
whether it admits a dependent type theory whose contexts correspond
to derived $\Gamma$-schemes.
\end{problem}

\begin{conjecture}
The internal language of derived~$\Gamma$-geometry is
bi-interpretable with dependent type theory equipped with a
ternary connective modeling the $\Gamma$-multiplication.
\end{conjecture}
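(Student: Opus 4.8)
The plan is to exhibit explicit translations in both directions and prove they are mutually inverse up to the appropriate notion of equivalence. First I would fix the two systems precisely. On the type-theoretic side, let $\mathrm{DTT}_\Gamma$ be Martin-L\"of type theory with $\Sigma$-, $\Pi$-, and identity types together with a primitive ternary term-former $\{-,-,-\}_\gamma$, one for each $\gamma$ in a fixed parameter type $\Gamma$, whose judgemental equalities are precisely the distributivity, ternary--$\Gamma$ associativity, and neutrality clauses of Definition~2.1 imposed on a designated scalar type $\mathbf{T}$ carrying an additive-monoid structure; for the derived refinement one passes to the homotopy-type-theoretic version $\mathrm{HoTT}_\Gamma$ in which these equalities become coherent higher paths. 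On the geometric side, take the internal language $\mathcal{L}(\mathcal{E}_\Gamma)$ of the elementary topos $\mathcal{E}_\Gamma$ (respectively the internal $\infty$-language of the $\infty$-topos of derived $\Gamma$-stacks), whose generic structure sheaf $\mathcal{O}$ is internally a commutative ternary $\Gamma$-semiring object.

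The forward translation $I:\mathrm{DTT}_\Gamma\to\mathcal{L}(\mathcal{E}_\Gamma)$ sends a context to an object of $\mathcal{E}_\Gamma$, a dependent type to a display map, $\Pi$ and $\Sigma$ to the dependent product and sum available in any topos, identity types to path objects built from diagonals, and the scalar type $\mathbf{T}$ to $\mathcal{O}$ with the ternary connective sent to the internal operation on $\mathcal{O}$. Soundness then reduces to checking that $\mathcal{O}$ satisfies the axioms of Definition~2.1 \emph{internally}, which holds by the construction of the structure sheaf in Section~3, with coherence of substitution handled by the standard local-universes machinery. The backward translation $J$ is read off the term model: $\mathrm{DTT}_\Gamma$ has a syntactic category $\mathcal{C}_{\mathrm{DTT}_\Gamma}$ which is the classifying topos of the geometric theory of local commutative ternary $\Gamma$-semirings, and I would identify that theory with the one presented by the site $\mathbf{Aff}_\Gamma$ under its Zariski topology, so that $\mathcal{C}_{\mathrm{DTT}_\Gamma}\simeq\mathcal{E}_\Gamma$ by the universal property of classifying topoi. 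The essential inputs here are the logical--geometric duality $\mathbf{Coh}(\mathcal{E}_\Gamma)\simeq(\mathbf{FP}\Gamma\text{-}\mathbf{Srg})^{\mathrm{op}}$ and the $\Spec_\Gamma\dashv\Gamma(-)$ adjunction, which together pin down the finitely presented contexts as exactly the affine $\Gamma$-schemes, and the $\Gamma$-Yoneda universality theorem, which promotes the comparison to a fully faithful embedding.

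Bi-interpretability then follows by verifying that $J\circ I$ is definitionally the identity on $\mathrm{DTT}_\Gamma$ — every rule is recovered from its interpretation because the embedding is fully faithful on finitely presented objects — and that $I\circ J$ is naturally equivalent to the identity on $\mathcal{L}(\mathcal{E}_\Gamma)$, since the syntactic category is initial among models, so that the round trip is the canonical comparison functor, which is an equivalence. For the derived version one repeats the argument with ``topos'' replaced by ``$\infty$-topos'' and $\mathcal{C}_{\mathrm{DTT}_\Gamma}$ by the $\infty$-categorical syntactic model, invoking the homotopy-coherence theorem of Section~8 so that the higher associators of the ternary connective match the comparison cells $\eta_{f,g}$ of $\mathbf{DerSch}_\Gamma$.

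The main obstacle I anticipate lies not in the forward direction — that is essentially routine soundness together with the logical--geometric duality already in hand — but in pinning down the \emph{exact} rules governing the ternary connective so that $\mathrm{DTT}_\Gamma$ neither over- nor under-generates. In the derived regime $\{-,-,-\}_\Gamma$ carries an infinite tower of coherence data (associators, and the analogue of a Mac Lane pentagon relating the two bracketings of five arguments, as in the coherence theorem of Section~8), and the type theory must be presented with precisely matching higher constructors; getting this balance exactly right is the crux. A closely related difficulty is the known subtlety of initiality for $\infty$-categorical syntactic models, needed to conclude $I\circ J\simeq\mathrm{id}$ rather than merely that the two sit in a canonical comparison. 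I therefore expect the derived form of the statement to be provable only up to equivalence of $\infty$-categories, with strict bi-interpretability holding cleanly at the truncated $(1,1)$-level where the classical classifying-topos machinery applies.
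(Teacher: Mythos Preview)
The statement you are attempting to prove is presented in the paper as a \emph{conjecture} in Section~10 (Future Directions and Open Problems); the paper offers no proof, sketch, or strategy for it. There is therefore nothing in the paper against which to compare your argument.

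As an attack on the open problem, your outline is a reasonable research programme rather than a proof: the forward interpretation $I$ is indeed routine soundness, and identifying the syntactic category of $\mathrm{DTT}_\Gamma$ with a classifying topos for local ternary $\Gamma$-semirings is the natural move. But you have correctly located the two places where the argument is not yet a proof. First, the paper never specifies the coherence laws for the ternary connective beyond the $1$-categorical axioms of Definition~2.1, so the claim that the higher associators of $\{-,-,-\}_\Gamma$ ``match the comparison cells $\eta_{f,g}$'' is not something you can verify---it is part of what the conjecture is asking you to formulate. Second, the initiality of the $\infty$-syntactic model is, as you note, a known open-ended issue even in ordinary HoTT, and nothing in the paper's Section~8 coherence theorem (which concerns only composition of pullbacks, not a full operadic coherence for the ternary product) supplies the missing ingredient. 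Your final paragraph is accurate: the $(1,1)$-truncated statement is within reach of classifying-topos machinery, while the derived version remains genuinely conjectural.
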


\subsection{Programmatic Outlook}

\begin{itemize}
  \item \textbf{Computation:} implement symbolic libraries for $\Gamma$-cohomology in \texttt{Sage} or \texttt{Python}.
  \item \textbf{Classification:} enumerate finite $\Gamma$-semirings and compute their spectra.
  \item \textbf{Categorical Integration:} connect $\Gamma$-stacks with motivic and Hodge-theoretic categories.
  \item \textbf{Applications:} extend the framework to data-flow algebra, multi-agent systems, and information theory.
\end{itemize}

\subsection{Conceptual Summary}

Derived $\Gamma$-geometry advances simultaneously toward higher arity,
$\infty$-categorical structures, computational homology, non-commutative
deformations, and logical foundations.  
Its breadth indicates a unified categorical framework where algebraic,
geometric, and physical notions interact through a common homological
core.

\section{Conclusion and Summary of Contributions}

This work developed a unified algebraic, geometric and categorical theory of
\emph{derived $\Gamma$-geometry}, arising from commutative ternary
$\Gamma$-semirings.  The theory shows that the ternary~$\Gamma$-formalism is
not an auxiliary variation of classical algebra but a coherent framework
supporting derived homology, geometric structures, and categorical dualities.

\subsection{Foundational Developments}

We introduced ternary $\Gamma$-semirings, established their ideal theory,
localization, and module categories, and defined the spectrum
$\mathrm{Spec}_\Gamma(T)$ with its Zariski-type topology.  These results extend
algebraic geometry to a multi-parametric setting where multiplicativity depends
simultaneously on two $\Gamma$-parameters.

\subsection{Homological and Cohomological Framework}

Through a categorical reconstruction of $\Gamma$-modules, we defined sheaves,
cohomology groups, derived functors, and projective/injective resolutions.
We proved tensor--Hom adjunctions, constructed derived categories, and obtained
Serre-type vanishing for quasi-coherent $\Gamma$-sheaves.  Together these form
the homological backbone of derived $\Gamma$-geometry.

\subsection{Geometric and Higher-Categorical Extensions}

The construction of affine $\Gamma$-schemes, structure sheaves and the
$\Gamma$-spectrum topology produced the geometric side of the theory.
Fibered categories, stacks of $\Gamma$-modules, and derived $\Gamma$-stacks
were established, situating the framework within modern derived and
bicategorical geometry.

\subsection{Dualities and Logical Structure}

We developed $\Gamma$-enriched dualities, including categorical Serre duality
and a Yoneda embedding adapted to the ternary setting.  Reflective localization
and fixed-point phenomena show that derived $\Gamma$-geometry possesses
internal logical coherence and self-referential stability.

\subsection{Physical and Dynamical Interpretation}

Viewing dg-derivations as dynamical generators links homological evolution
to physical time evolution.  Cohomological invariants act as conserved
quantities, and ternary operations model triadic interactions.  This places
derived $\Gamma$-geometry in correspondence with non-commutative and quantum
field structures.

\subsection{Computational Validation}

Finite $\Gamma$-semiring models were used to verify vanishing theorems, exact
sequences and adjunctions computationally.  Complexity bounds for
$\Gamma$-cohomology were proposed, providing a foundation for algorithmic
geometry in the ternary setting.

\subsection{Synthesis and Outlook}

Derived $\Gamma$-geometry integrates algebra, geometry, category theory and
dynamics into a single homological structure.  The open problems identified in
Section~8---higher-arity extensions, non-commutative $\Gamma$-stacks \cite{Connes1994},
algorithmic cohomology, and quantization---chart the next steps in developing
a universal categorical geometry driven by the ternary parameter~$\Gamma$ \cite{Michalski2003, Loday1998, Leinster2014}..

\subsection*{Acknowledgement}
The first author gratefully acknowledges the guidance of
\textbf{Dr.~D.~Madhusudhana Rao}.

\subsection*{Funding}
No external funding was received.

\subsection*{Conflict of Interest}
The authors declare no conflicts of interest.

\subsection*{Author Contributions}
The first author developed the algebraic, geometric and computational
framework; the second author supervised the research, reviewed the results and
verified mathematical correctness.

\end{document}